\newtheorem{thm}{Theorem}[section]
\newtheorem{cor}[thm]{Corollary}
\newtheorem{lem}[thm]{Lemma}
\newtheorem{prop}[thm]{Proposition}
\newtheorem{prop and def}[thm]{Proposition and Definition}
\newtheorem*{thm A}{Theorem A}
\newtheorem*{thm B}{Theorem B}
\theoremstyle{definition}
\newtheorem{defn}[thm]{Definition}
\newtheorem{example}[thm]{Example}
\theoremstyle{remark}
\newtheorem{rem}[thm]{Remark}
\newtheorem{conj}[thm]{Conjecture}
\def\cal{\mathcal}
\def\bb{\mathbb} 
\def\a{\alpha } 
\def\b{\beta }
\def\g{\gamma }
\def\d{\delta }
\def\D{\Delta}
\def\e{\epsilon }
\def\g{\gamma }
\def\G{\Gamma }
\def\Z{\mathbb Z}
\def\l{\lambda }
\def\o{\omega }
\def\O{\Omega }
\def\r{\rho }
\def\s{\sigma }
\def\t{\theta }
\def\f{\varphi }
\def\iff{ if and only if }
\def\dps{\displaystyle }
\def\det{{\rm det}\ }
\def\omlog{\theta}
\def\ot{\otimes }
\def\part{\partial }
\newcommand{\tlowername}[2]%
{$\stackrel{\makebox[1pt]{#1}}%
{\begin{picture}(0,0)%
\put(0,0){\makebox(0,6)[t]{\makebox[1pt]{$#2$}}}%
\end{picture}}$}%
\newcommand{\AR}[1]%
{\begin{picture}(#1,0)%
\put(0,0){\vector(1,0){#1}}%
\end{picture}}%
\newcommand{\DOTAR}[1]%
{\NUMBEROFDOTS=#1%
\divide\NUMBEROFDOTS by 3%
\begin{picture}(#1,0)%
\multiput(0,0)(3,0){\NUMBEROFDOTS}{\circle*{1}}%
\put(#1,0){\vector(1,0){0}}%
\end{picture}}%
\newcommand{\MONO}[1]%
{\begin{picture}(#1,0)%
\put(0,0){\vector(1,0){#1}}%
\put(2,-2){\line(0,1){4}}%
\end{picture}}%
\newcommand{\EPI}[1]%
{\begin{picture}(#1,0)(-#1,0)%
\put(-#1,0){\vector(1,0){#1}}%
\put(-6,-2){\line(0,1){4}}%
\end{picture}}%
\newcommand{\BIMO}[1]%
{\begin{picture}(#1,0)(-#1,0)%
\put(-#1,0){\vector(1,0){#1}}%
\put(-6,-2){\line(0,1){4}}%
\put(-#1,-2){\hspace{2pt}\line(0,1){4}}%
\end{picture}}%
\newcommand{\BIAR}[1]%
{\begin{picture}(#1,4)%
\put(0,0){\vector(1,0){#1}}%
\put(0,4){\vector(1,0){#1}}%
\end{picture}}%
\newcommand{\EQL}[1]%
{\begin{picture}(#1,0)%
\put(0,1){\line(1,0){#1}}%
\put(0,-1){\line(1,0){#1}}%
\end{picture}}%
\newcommand{\ADJAR}[1]%
{\begin{picture}(#1,4)%
\put(0,0){\vector(1,0){#1}}%
\put(#1,4){\vector(-1,0){#1}}%
\end{picture}}%
\newcommand{\BKAR}[1]%
{\begin{picture}(#1,0)%
\put(#1,0){\vector(-1,0){#1}}%
\end{picture}}%
\newcommand{\BKDOTAR}[1]%
{\NUMBEROFDOTS=#1%
\divide\NUMBEROFDOTS by 3%
\begin{picture}(#1,0)%
\multiput(#1,0)(-3,0){\NUMBEROFDOTS}{\circle*{1}}%
\put(0,0){\vector(-1,0){0}}%
\end{picture}}%
\newcommand{\BKMONO}[1]%
{\begin{picture}(#1,0)(-#1,0)%
\put(0,0){\vector(-1,0){#1}}%
\put(-2,-2){\line(0,1){4}}%
\end{picture}}%
\newcommand{\BKEPI}[1]%
{\begin{picture}(#1,0)%
\put(#1,0){\vector(-1,0){#1}}%
\put(6,-2){\line(0,1){4}}%
\end{picture}}%
\newcommand{\BKBIMO}[1]%
{\begin{picture}(#1,0)%
\put(#1,0){\vector(-1,0){#1}}%
\put(6,-2){\line(0,1){4}}%
\put(#1,-2){\hspace{-2pt}\line(0,1){4}}%
\end{picture}}%
\newcommand{\BKBIAR}[1]%
{\begin{picture}(#1,4)%
\put(#1,0){\vector(-1,0){#1}}%
\put(#1,4){\vector(-1,0){#1}}%
\end{picture}}%
\newcommand{\BKADJAR}[1]%
{\begin{picture}(#1,4)%
\put(0,4){\vector(1,0){#1}}%
\put(#1,0){\vector(-1,0){#1}}%
\end{picture}}%
\newcommand{\lowername}[2]%
{$\stackrel{\makebox[1pt]{#1}}%
{\begin{picture}(0,0)%
\truex{600}%
\put(0,0){\makebox(0,\value{x})[t]{\makebox[1pt]{$#2$}}}%
\end{picture}}$}%
\newcommand{\hcase}[2]%
{\makebox[0pt]%
{\raisebox{-1pt}[0pt][0pt]{#1{#2}}}}%
\newcommand{\Hcase}[3]%
{\makebox[0pt]
{\raisebox{-1pt}[0pt][0pt]%
{$\stackrel{\makebox[0pt]{$\textstyle{#2}$}}{#1{#3}}$}}}%
\newcommand{\hcasE}[3]%
{\makebox[0pt]%
{\raisebox{-9pt}[0pt][0pt]%
{\lowername{#1{#3}}{#2}}}}%
\newcommand{\hbicase}[2]%
{\makebox[0pt]%
{\raisebox{-2.5pt}[0pt][0pt]{#1{#2}}}}%
\newcommand{\Hbicase}[4]%
{\makebox[0pt]
{\raisebox{-10.5pt}[0pt][0pt]%
{$\stackrel{\makebox[0pt]{$\textstyle{#2}$}}%
{\mbox{\lowername{#1{#4}}{#3}}}$}}}%
\newcommand{\EAR}[1]%
{\begin{picture}(#1,0)%
\put(0,0){\vector(1,0){#1}}%
\end{picture}}%
\newcommand{\EDOTAR}[1]%
{\truex{100}\truey{300}%
\NUMBEROFDOTS=#1%
\divide\NUMBEROFDOTS by \value{y}%
\begin{picture}(#1,0)%
\multiput(0,0)(\value{y},0){\NUMBEROFDOTS}%
{\circle*{\value{x}}}%
\put(#1,0){\vector(1,0){0}}%
\end{picture}}%
\newcommand{\EMONO}[1]%
{\begin{picture}(#1,0)%
\put(0,0){\vector(1,0){#1}}%
\truex{300}\truey{600}%
\put(\value{x},-\value{x}){\line(0,1){\value{y}}}%
\end{picture}}%
\newcommand{\EEPI}[1]%
{\begin{picture}(#1,0)(-#1,0)%
\put(-#1,0){\vector(1,0){#1}}%
\truex{300}\truey{600}\truez{800}%
\put(-\value{z},-\value{x}){\line(0,1){\value{y}}}%
\end{picture}}%
\newcommand{\EBIMO}[1]%
{\begin{picture}(#1,0)(-#1,0)%
\put(-#1,0){\vector(1,0){#1}}%
\truex{300}\truey{600}\truez{800}%
\put(-\value{z},-\value{x}){\line(0,1){\value{y}}}%
\put(-#1,-\value{x}){\hspace{3pt}\line(0,1){\value{y}}}%
\end{picture}}%
\newcommand{\EBIAR}[1]%
{\truex{400}%
\begin{picture}(#1,\value{x})%
\put(0,0){\vector(1,0){#1}}%
\put(0,\value{x}){\vector(1,0){#1}}%
\end{picture}}%
\newcommand{\EEQL}[1]%
{\begin{picture}(#1,0)%
\truex{200}%
\put(0,\value{x}){\line(1,0){#1}}%
\put(0,0){\line(1,0){#1}}%
\end{picture}}%
\newcommand{\EADJAR}[1]%
{\truex{400}%
\begin{picture}(#1,\value{x})%
\put(0,0){\vector(1,0){#1}}%
\put(#1,\value{x}){\vector(-1,0){#1}}%
\end{picture}}%
\newcommand{\ear}%
{\hspace{\SOURCE\unitlength}%
\hcase{\EAR}{\ARROWLENGTH}}%
\newcommand{\Earv}[2]{\Hcase{\EAR}{#1}{#200}}%
\newcommand{\Ear}[1]%
{\hspace{\SOURCE\unitlength}%
\Hcase{\EAR}{#1}{\ARROWLENGTH}}%
\newcommand{\eaR}[1]%
{\hspace{\SOURCE\unitlength}%
\hcasE{\EAR}{#1}{\ARROWLENGTH}}%
\newcommand{\edotar}%
{\hspace{\SOURCE\unitlength}%
\hcase{\EDOTAR}{\ARROWLENGTH}}%
\newcommand{\Edotar}[1]%
{\hspace{\SOURCE\unitlength}%
\Hcase{\EDOTAR}{#1}{\ARROWLENGTH}}%
\newcommand{\edotaR}[1]%
{\hspace{\SOURCE\unitlength}%
\hcasE{\EDOTAR}{#1}{\ARROWLENGTH}}%
\newcommand{\emono}%
{\hspace{\SOURCE\unitlength}%
\hcase{\EMONO}{\ARROWLENGTH}}%
\newcommand{\Emono}[1]%
{\hspace{\SOURCE\unitlength}%
\Hcase{\EMONO}{#1}{\ARROWLENGTH}}%
\newcommand{\emonO}[1]%
{\hspace{\SOURCE\unitlength}%
\hcasE{\EMONO}{#1}{\ARROWLENGTH}}%
\newcommand{\eepi}%
{\hspace{\SOURCE\unitlength}%
\hcase{\EEPI}{\ARROWLENGTH}}%
\newcommand{\Eepi}[1]%
{\hspace{\SOURCE\unitlength}%
\Hcase{\EEPI}{#1}{\ARROWLENGTH}}%
\newcommand{\eepI}[1]%
{\hspace{\SOURCE\unitlength}%
\hcasE{\EEPI}{#1}{\ARROWLENGTH}}%
\newcommand{\ebimo}%
{\hspace{\SOURCE\unitlength}%
\hcase{\EBIMO}{\ARROWLENGTH}}%
\newcommand{\Ebimo}[1]%
{\hspace{\SOURCE\unitlength}%
\Hcase{\EBIMO}{#1}{\ARROWLENGTH}}%
\newcommand{\ebimO}[1]%
{\hspace{\SOURCE\unitlength}%
\hcasE{\EBIMO}{#1}{\ARROWLENGTH}}%
\newcommand{\eiso}%
{\hspace{\SOURCE\unitlength}%
\Hcase{\EAR}{\cong}{\ARROWLENGTH}}%
\newcommand{\Eiso}[1]%
{\hspace{\SOURCE\unitlength}%
\Hcase{\EAR}{\cong#1}{\ARROWLENGTH}}%
\newcommand{\eisO}[1]%
{\hspace{\SOURCE\unitlength}%
\hcasE{\EAR}{\cong#1}{\ARROWLENGTH}}%
\newcommand{\ebiar}%
{\hspace{\SOURCE\unitlength}%
\hbicase{\EBIAR}{\ARROWLENGTH}}%
\newcommand{\Ebiar}[2]%
{\hspace{\SOURCE\unitlength}%
\Hbicase{\EBIAR}{#1}{#2}{\ARROWLENGTH}}%
\newcommand{\eeql}%
{\hspace{\SOURCE\unitlength}%
\hbicase{\EEQL}{\ARROWLENGTH}}%
\newcommand{\eadjar}%
{\hspace{\SOURCE\unitlength}%
\hbicase{\EADJAR}{\ARROWLENGTH}}%
\newcommand{\Eadjar}[2]%
{\hspace{\SOURCE\unitlength}%
\Hbicase{\EADJAR}{#1}{#2}{\ARROWLENGTH}}%
\newcommand{\WAR}[1]%
{\begin{picture}(#1,0)%
\put(#1,0){\vector(-1,0){#1}}%
\end{picture}}%
\newcommand{\WDOTAR}[1]%
{\truex{100}\truey{300}%
\NUMBEROFDOTS=#1%
\divide\NUMBEROFDOTS by \value{y}%
\begin{picture}(#1,0)%
\multiput(#1,0)(-\value{y},0){\NUMBEROFDOTS}%
{\circle*{\value{x}}}%
\put(0,0){\vector(-1,0){0}}%
\end{picture}}%
\newcommand{\WMONO}[1]%
{\begin{picture}(#1,0)(-#1,0)%
\put(0,0){\vector(-1,0){#1}}%
\truex{300}\truey{600}%
\put(-\value{x},-\value{x}){\line(0,1){\value{y}}}%
\end{picture}}%
\newcommand{\WEPI}[1]%
{\begin{picture}(#1,0)%
\put(#1,0){\vector(-1,0){#1}}%
\truex{300}\truey{600}\truez{800}%
\put(\value{z},-\value{x}){\line(0,1){\value{y}}}%
\end{picture}}%
\newcommand{\WBIMO}[1]%
{\begin{picture}(#1,0)%
\put(#1,0){\vector(-1,0){#1}}%
\truex{300}\truey{600}\truez{800}%
\put(\value{z},-\value{x}){\line(0,1){\value{y}}}%
\put(#1,-\value{x}){\hspace{-3pt}\line(0,1){\value{y}}}%
\end{picture}}%
\newcommand{\WBIAR}[1]%
{\truex{400}%
\begin{picture}(#1,\value{x})%
\put(#1,0){\vector(-1,0){#1}}%
\put(#1,\value{x}){\vector(-1,0){#1}}%
\end{picture}}%
\newcommand{\WADJAR}[1]%
{\truex{400}%
\begin{picture}(#1,\value{x})%
\put(0,\value{x}){\vector(1,0){#1}}%
\put(#1,0){\vector(-1,0){#1}}%
\end{picture}}%
\newcommand{\war}%
{\hspace{\SOURCE\unitlength}%
\hcase{\WAR}{\ARROWLENGTH}}%
\newcommand{\War}[1]%
{\hspace{\SOURCE\unitlength}%
\Hcase{\WAR}{#1}{\ARROWLENGTH}}%
\newcommand{\waR}[1]%
{\hspace{\SOURCE\unitlength}%
\hcasE{\WAR}{#1}{\ARROWLENGTH}}%
\newcommand{\wdotar}%
{\hspace{\SOURCE\unitlength}%
\hcase{\WDOTAR}{\ARROWLENGTH}}%
\newcommand{\Wdotar}[1]%
{\hspace{\SOURCE\unitlength}%
\Hcase{\WDOTAR}{#1}{\ARROWLENGTH}}%
\newcommand{\wdotaR}[1]%
{\hspace{\SOURCE\unitlength}%
\hcasE{\WDOTAR}{#1}{\ARROWLENGTH}}%
\newcommand{\wmono}%
{\hspace{\SOURCE\unitlength}%
\hcase{\WMONO}{\ARROWLENGTH}}%
\newcommand{\Wmono}[1]%
{\hspace{\SOURCE\unitlength}%
\Hcase{\WMONO}{#1}{\ARROWLENGTH}}%
\newcommand{\wmonO}[1]%
{\hspace{\SOURCE\unitlength}%
\hcasE{\WMONO}{#1}{\ARROWLENGTH}}%
\newcommand{\wepi}%
{\hspace{\SOURCE\unitlength}%
\hcase{\WEPI}{\ARROWLENGTH}}%
\newcommand{\Wepi}[1]%
{\hspace{\SOURCE\unitlength}%
\Hcase{\WEPI}{#1}{\ARROWLENGTH}}%
\newcommand{\wepI}[1]%
{\hspace{\SOURCE\unitlength}%
\hcasE{\WEPI}{#1}{\ARROWLENGTH}}%
\newcommand{\wbimo}%
{\hspace{\SOURCE\unitlength}%
\hcase{\WBIMO}{\ARROWLENGTH}}%
\newcommand{\Wbimo}[1]%
{\hspace{\SOURCE\unitlength}%
\Hcase{\WBIMO}{#1}{\ARROWLENGTH}}%
\newcommand{\wbimO}[1]%
{\hspace{\SOURCE\unitlength}%
\hcasE{\WBIMO}{#1}{\ARROWLENGTH}}%
\newcommand{\wiso}%
{\hspace{\SOURCE\unitlength}%
\Hcase{\WAR}{\cong}{\ARROWLENGTH}}%
\newcommand{\Wiso}[1]%
{\hspace{\SOURCE\unitlength}%
\Hcase{\WAR}{#1}{\ARROWLENGTH}}%
\newcommand{\wisO}[1]%
{\hspace{\SOURCE\unitlength}%
\hcasE{\WAR}{#1}{\ARROWLENGTH}}%
\newcommand{\wbiar}%
{\hspace{\SOURCE\unitlength}%
\hbicase{\WBIAR}{\ARROWLENGTH}}%
\newcommand{\Wbiar}[2]%
{\hspace{\SOURCE\unitlength}%
\Hbicase{\WBIAR}{#1}{#2}{\ARROWLENGTH}}%
\newcommand{\weql}%
{\hspace{\SOURCE\unitlength}%
\hbicase{\EEQL}{\ARROWLENGTH}}%
\newcommand{\wadjar}%
{\hspace{\SOURCE\unitlength}%
\hbicase{\WADJAR}{\ARROWLENGTH}}%
\newcommand{\Wadjar}[2]%
{\hspace{\SOURCE\unitlength}%
\Hbicase{\WADJAR}{#1}{#2}{\ARROWLENGTH}}%
\newcommand{\Vcase}[3]{\makebox[0pt]%
{\makebox[0pt][r]{\raisebox{0pt}[0pt][0pt]{${#2}\hspace{2pt}$}}}#1{#3}}%
\newcommand{\vcasE}[3]{\makebox[0pt]%
{#1{#3}\makebox[0pt][l]{\raisebox{0pt}[0pt][0pt]{\hspace{2pt}$#2$}}}}%
\newcommand{\Vbicase}[4]{\makebox[0pt]%
{\makebox[0pt][r]{\raisebox{0pt}[0pt][0pt]{$#2$\hspace{4pt}}}#1{#4}%
\makebox[0pt][l]{\raisebox{0pt}[0pt][0pt]{\hspace{5pt}$#3$}}}}%
\newcommand{\SAR}[1]%
{\begin{picture}(0,0)%
\put(0,0){\makebox(0,0)%
{\begin{picture}(0,#1)%
\put(0,#1){\vector(0,-1){#1}}%
\end{picture}}}\end{picture}}%
\newcommand{\SDOTAR}[1]%
{\truex{100}\truey{300}%
\NUMBEROFDOTS=#1%
\divide\NUMBEROFDOTS by \value{y}%
\begin{picture}(0,0)%
\put(0,0){\makebox(0,0)%
{\begin{picture}(0,#1)%
\multiput(0,#1)(0,-\value{y}){\NUMBEROFDOTS}%
{\circle*{\value{x}}}%
\put(0,0){\vector(0,-1){0}}%
\end{picture}}}\end{picture}}%
\newcommand{\SMONO}[1]%
{\begin{picture}(0,0)%
\put(0,0){\makebox(0,0)%
{\begin{picture}(0,#1)%
\put(0,#1){\vector(0,-1){#1}}%
\truex{300}\truey{600}%
\put(0,#1){\begin{picture}(0,0)%
\put(-\value{x},-\value{x}){\line(1,0){\value{y}}}\end{picture}}%
\end{picture}}}\end{picture}}%
\newcommand{\SEPI}[1]%
{\begin{picture}(0,0)%
\put(0,0){\makebox(0,0)%
{\begin{picture}(0,#1)%
\put(0,#1){\vector(0,-1){#1}}%
\truex{300}\truey{600}\truez{800}%
\put(-\value{x},\value{z}){\line(1,0){\value{y}}}%
\end{picture}}}\end{picture}}%
\newcommand{\SBIMO}[1]%
{\begin{picture}(0,0)%
\put(0,0){\makebox(0,0)%
{\begin{picture}(0,#1)%
\put(0,#1){\vector(0,-1){#1}}%
\truex{300}\truey{600}\truez{800}%
\put(0,#1){\begin{picture}(0,0)%
\put(-\value{x},-\value{x}){\line(1,0){\value{y}}}\end{picture}}%
\put(-\value{x},\value{z}){\line(1,0){\value{y}}}%
\end{picture}}}\end{picture}}%
\newcommand{\SBIAR}[1]%
{\begin{picture}(0,0)%
\truex{200}%
\put(0,0){\makebox(0,0)%
{\begin{picture}(0,#1)\put(-\value{x},#1){\vector(0,-1){#1}}%
\put(\value{x},#1){\vector(0,-1){#1}}%
\end{picture}}}\end{picture}}%
\newcommand{\SEQL}[1]%
{\begin{picture}(0,0)%
\truex{100}%
\put(0,0){\makebox(0,0)%
{\begin{picture}(0,#1)\put(-\value{x},#1){\line(0,-1){#1}}%
\put(\value{x},#1){\line(0,-1){#1}}%
\end{picture}}}\end{picture}}%
\newcommand{\Sarv}[2]{\Vcase{\SAR}{#1}{#200}}%
\newcommand{\Sar}[1]{\Sarv{#1}{50}}%
\newcommand{\Sisov}[2]%
{\Vbicase{\SAR}{#1\hspace{-2pt}}{\hspace{-2pt}\cong}{#200}}%
\newcommand{\NAR}[1]%
{\begin{picture}(0,0)%
\put(0,0){\makebox(0,0)%
{\begin{picture}(0,#1)\put(0,0){\vector(0,1){#1}}%
\end{picture}}}\end{picture}}%
\newcommand{\NDOTAR}[1]%
{\truex{100}\truey{300}%
\NUMBEROFDOTS=#1%
\divide\NUMBEROFDOTS by \value{y}%
\begin{picture}(0,0)%
\put(0,0){\makebox(0,0)%
{\begin{picture}(0,#1)%
\multiput(0,0)(0,\value{y}){\NUMBEROFDOTS}%
{\circle*{\value{x}}}%
\put(0,#1){\vector(0,1){0}}%
\end{picture}}}\end{picture}}%
\newcommand{\NMONO}[1]%
{\begin{picture}(0,0)%
\put(0,0){\makebox(0,0)%
{\begin{picture}(0,#1)%
\put(0,0){\vector(0,1){#1}}%
\truex{300}\truey{600}%
\put(-\value{x},\value{x}){\line(1,0){\value{y}}}%
\end{picture}}}%
\end{picture}}%
\newcommand{\NEPI}[1]%
{\begin{picture}(0,0)%
\put(0,0){\makebox(0,0)%
{\begin{picture}(0,#1)%
\put(0,0){\vector(0,1){#1}}%
\truex{300}\truey{600}\truez{800}%
\put(0,#1){\begin{picture}(0,0)%
\put(-\value{x},-\value{z}){\line(1,0){\value{y}}}\end{picture}}%
\end{picture}}}\end{picture}}%
\newcommand{\NBIMO}[1]%
{\begin{picture}(0,0)%
\put(0,0){\makebox(0,0)%
{\begin{picture}(0,#1)%
\put(0,0){\vector(0,1){#1}}%
\truex{300}\truey{600}\truez{800}%
\put(-\value{x},\value{x}){\line(1,0){\value{y}}}%
\put(0,#1){\begin{picture}(0,0)%
\put(-\value{x},-\value{z}){\line(1,0){\value{y}}}\end{picture}}%
\end{picture}}}\end{picture}}%
\newcommand{\NBIAR}[1]%
{\begin{picture}(0,0)%
\truex{200}%
\put(0,0){\makebox(0,0)%
{\begin{picture}(0,#1)\put(-\value{x},0){\vector(0,1){#1}}%
\put(\value{x},0){\vector(0,1){#1}}%
\end{picture}}}\end{picture}}%
\newcommand{\naRv}[2]{\vcasE{\NAR}{#1}{#200}}%
\newcommand{\naR}[1]{\naRv{#1}{50}}%
\newcommand{\Nisov}[2]%
{\Vbicase{\NAR}{#1\hspace{-2pt}}{\hspace{-2pt}\cong}{#200}}%
\newcommand{\fdcase}[3]{\begin{picture}(0,0)%
\put(0,-150){#1}%
\truex{200}\truey{600}\truez{600}%
\put(-\value{x},-\value{x}){\makebox(0,\value{z})[r]{${#2}$}}%
\put(\value{x},-\value{y}){\makebox(0,\value{z})[l]{${#3}$}}%
\end{picture}}%
\newcommand{\NEAR}{\begin{picture}(0,0)%
\put(-2900,-2900){\vector(1,1){5800}}%
\end{picture}}%
\newcommand{\NEDOTAR}%
{\truex{100}\truey{212}%
\NUMBEROFDOTS=5800%
\divide\NUMBEROFDOTS by \value{y}%
\begin{picture}(0,0)%
\multiput(-2900,-2900)(\value{y},\value{y}){\NUMBEROFDOTS}%
{\circle*{\value{x}}}%
\put(2900,2900){\vector(1,1){0}}%
\end{picture}}%
\newcommand{\Near}[1]{\fdcase{\NEAR}{#1}{}}%
\newcommand{\SWAR}{\begin{picture}(0,0)%
\put(2900,2900){\vector(-1,-1){5800}}%
\end{picture}}%
\newcommand{\SWDOTAR}%
{\truex{100}\truey{212}%
\NUMBEROFDOTS=5800%
\divide\NUMBEROFDOTS by \value{y}%
\begin{picture}(0,0)%
\multiput(2900,2900)(-\value{y},-\value{y}){\NUMBEROFDOTS}%
{\circle*{\value{x}}}%
\put(-2900,-2900){\vector(-1,-1){0}}%
\end{picture}}%
\newcommand{\swaR}[1]{\fdcase{\SWAR}{}{#1}}%
\newcommand{\sdcase}[3]{\begin{picture}(0,0)%
\put(0,-150){#1}%
\truex{100}\truez{600}%
\put(\value{x},\value{x}){\makebox(0,\value{z})[l]{${#2}$}}%
\truex{300}\truey{800}%
\put(-\value{x},-\value{y}){\makebox(0,\value{z})[r]{${#3}$}}%
\end{picture}}%
\newcommand{\SEAR}{\begin{picture}(0,0)%
\put(-2900,2900){\vector(1,-1){5800}}%
\end{picture}}%
\newcommand{\SEDOTAR}%
{\truex{100}\truey{212}%
\NUMBEROFDOTS=5800%
\divide\NUMBEROFDOTS by \value{y}%
\begin{picture}(0,0)%
\multiput(-2900,2900)(\value{y},-\value{y}){\NUMBEROFDOTS}%
{\circle*{\value{x}}}%
\put(2900,-2900){\vector(1,-1){0}}%
\end{picture}}%
\newcommand{\Sear}[1]{\sdcase{\SEAR}{#1}{}}%
\newcommand{\seaR}[1]{\sdcase{\SEAR}{}{#1}}%
\newcommand{\NWDOTAR}%
{\truex{100}\truey{212}%
\NUMBEROFDOTS=5800%
\divide\NUMBEROFDOTS by \value{y}%
\begin{picture}(0,0)%
\multiput(2900,-2900)(-\value{y},\value{y}){\NUMBEROFDOTS}%
{\circle*{\value{x}}}%
\put(-2900,2900){\vector(-1,1){0}}%
\end{picture}}%
\newcommand{\ENEAR}[2]%
{\makebox[0pt]{\begin{picture}(0,0)%
\put(0,-150){\makebox(0,0){\begin{picture}(0,0)%
\put(-6600,-3300){\vector(2,1){13200}}%
\truex{200}\truey{800}\truez{600}%
\put(-\value{x},\value{x}){\makebox(0,\value{z})[r]{${#1}$}}%
\put(\value{x},-\value{y}){\makebox(0,\value{z})[l]{${#2}$}}%
\end{picture}}}\end{picture}}}%
\newcommand{\ESEAR}[2]%
{\makebox[0pt]{\begin{picture}(0,0)%
\put(0,-150){\makebox(0,0){\begin{picture}(0,0)%
\put(-6600,3300){\vector(2,-1){13200}}%
\truex{200}\truey{800}\truez{600}%
\put(\value{x},\value{x}){\makebox(0,\value{z})[l]{${#1}$}}%
\put(-\value{x},-\value{y}){\makebox(0,\value{z})[r]{${#2}$}}%
\end{picture}}}\end{picture}}}%
\newcommand{\WNWAR}[2]%
{\makebox[0pt]{\begin{picture}(0,0)%
\put(0,-150){\makebox(0,0){\begin{picture}(0,0)%
\put(6600,-3300){\vector(-2,1){13200}}%
\truex{200}\truey{800}\truez{600}%
\put(\value{x},\value{x}){\makebox(0,\value{z})[l]{${#1}$}}%
\put(-\value{x},-\value{y}){\makebox(0,\value{z})[r]{${#2}$}}%
\end{picture}}}\end{picture}}}%
\newcommand{\WSWAR}[2]%
{\makebox[0pt]{\begin{picture}(0,0)%
\put(0,-150){\makebox(0,0){\begin{picture}(0,0)%
\put(6600,3300){\vector(-2,-1){13200}}%
\truex{200}\truey{800}\truez{600}%
\put(-\value{x},\value{x}){\makebox(0,\value{z})[r]{${#1}$}}%
\put(\value{x},-\value{y}){\makebox(0,\value{z})[l]{${#2}$}}%
\end{picture}}}\end{picture}}}%
\newcommand{\NNEAR}[2]%
{\raisebox{-1pt}[0pt][0pt]{\begin{picture}(0,0)%
\put(0,0){\makebox(0,0){\begin{picture}(0,0)%
\put(-3300,-6600){\vector(1,2){6600}}%
\truex{100}\truez{600}%
\put(-\value{x},\value{x}){\makebox(0,\value{z})[r]{${#1}$}}%
\put(\value{x},-\value{z}){\makebox(0,\value{z})[l]{${#2}$}}%
\end{picture}}}\end{picture}}}%
\newcommand{\SSWAR}[2]%
{\raisebox{-1pt}[0pt][0pt]{\begin{picture}(0,0)%
\put(0,0){\makebox(0,0){\begin{picture}(0,0)%
\put(3300,6600){\vector(-1,-2){6600}}%
\truex{100}\truez{600}%
\put(-\value{x},\value{x}){\makebox(0,\value{z})[r]{${#1}$}}%
\put(\value{x},-\value{z}){\makebox(0,\value{z})[l]{${#2}$}}%
\end{picture}}}\end{picture}}}%
\newcommand{\SSEAR}[2]%
{\raisebox{-1pt}[0pt][0pt]{\begin{picture}(0,0)%
\put(0,0){\makebox(0,0){\begin{picture}(0,0)%
\put(-3300,6600){\vector(1,-2){6600}}%
\truex{200}\truez{600}%
\put(\value{x},\value{x}){\makebox(0,\value{z})[l]{${#1}$}}%
\put(-\value{x},-\value{z}){\makebox(0,\value{z})[r]{${#2}$}}%
\end{picture}}}\end{picture}}}%
\newcommand{\NNWAR}[2]%
{\raisebox{-1pt}[0pt][0pt]{\begin{picture}(0,0)%
\put(0,0){\makebox(0,0){\begin{picture}(0,0)%
\put(3300,-6600){\vector(-1,2){6600}}%
\truex{200}\truez{600}%
\put(\value{x},\value{x}){\makebox(0,\value{z})[l]{${#1}$}}%
\put(-\value{x},-\value{z}){\makebox(0,\value{z})[r]{${#2}$}}%
\end{picture}}}\end{picture}}}%
\newcommand{\Necurve}[2]%
{\begin{picture}(0,0)%
\truex{1300}\truey{2000}\truez{200}%
\put(0,\value{x}){\oval(#200,\value{y})[t]}%
\put(0,\value{x}){\makebox(0,0){\begin{picture}(#200,0)%
\put(#200,0){\vector(0,-1){\value{z}}}%
\put(0,0){\line(0,-1){\value{z}}}\end{picture}}}%
\truex{2500}%
\put(0,\value{x}){\makebox(0,0)[b]{${#1}$}}%
\end{picture}}%
\newcommand{\Nwcurve}[2]%
{\begin{picture}(0,0)%
\truex{1300}\truey{2000}\truez{200}%
\put(0,\value{x}){\oval(#200,\value{y})[t]}%
\put(0,\value{x}){\makebox(0,0){\begin{picture}(#200,0)%
\put(#200,0){\line(0,-1){\value{z}}}%
\put(0,0){\vector(0,-1){\value{z}}}\end{picture}}}%
\truex{2500}%
\put(0,\value{x}){\makebox(0,0)[b]{${#1}$}}%
\end{picture}}%
\newcommand{\Securve}[2]%
{\begin{picture}(0,0)%
\truex{1300}\truey{2000}\truez{200}%
\put(0,-\value{x}){\oval(#200,\value{y})[b]}%
\put(0,-\value{x}){\makebox(0,0){\begin{picture}(#200,0)%
\put(#200,0){\vector(0,1){\value{z}}}%
\put(0,0){\line(0,1){\value{z}}}\end{picture}}}%
\truex{2500}%
\put(0,-\value{x}){\makebox(0,0)[t]{${#1}$}}%
\end{picture}}%
\newcommand{\Swcurve}[2]%
{\begin{picture}(0,0)%
\truex{1300}\truey{2000}\truez{200}%
\put(0,-\value{x}){\oval(#200,\value{y})[b]}%
\put(0,-\value{x}){\makebox(0,0){\begin{picture}(#200,0)%
\put(#200,0){\line(0,1){\value{z}}}%
\put(0,0){\vector(0,1){\value{z}}}\end{picture}}}%
\truex{2500}%
\put(0,-\value{x}){\makebox(0,0)[t]{${#1}$}}%
\end{picture}}%
\newcommand{\Escurve}[2]%
{\begin{picture}(0,0)%
\truex{1400}\truey{2000}\truez{200}%
\put(\value{x},0){\oval(\value{y},#200)[r]}%
\put(\value{x},0){\makebox(0,0){\begin{picture}(0,#200)%
\put(0,0){\vector(-1,0){\value{z}}}%
\put(0,#200){\line(-1,0){\value{z}}}\end{picture}}}%
\truex{2500}%
\put(\value{x},0){\makebox(0,0)[l]{${#1}$}}%
\end{picture}}%
\newcommand{\Encurve}[2]%
{\begin{picture}(0,0)%
\truex{1400}\truey{2000}\truez{200}%
\put(\value{x},0){\oval(\value{y},#200)[r]}%
\put(\value{x},0){\makebox(0,0){\begin{picture}(0,#200)%
\put(0,0){\line(-1,0){\value{z}}}%
\put(0,#200){\vector(-1,0){\value{z}}}\end{picture}}}%
\truex{2500}%
\put(\value{x},0){\makebox(0,0)[l]{${#1}$}}%
\end{picture}}%
\newcommand{\Wscurve}[2]%
{\begin{picture}(0,0)%
\truex{1300}\truey{2000}\truez{200}%
\put(-\value{x},0){\oval(\value{y},#200)[l]}%
\put(-\value{x},0){\makebox(0,0){\begin{picture}(0,#200)%
\put(0,0){\vector(1,0){\value{z}}}%
\put(0,#200){\line(1,0){\value{z}}}\end{picture}}}%
\truex{2400}%
\put(-\value{x},0){\makebox(0,0)[r]{${#1}$}}%
\end{picture}}%
\newcommand{\Wncurve}[2]%
{\begin{picture}(0,0)%
\truex{1300}\truey{2000}\truez{200}%
\put(-\value{x},0){\oval(\value{y},#200)[l]}%
\put(-\value{x},0){\makebox(0,0){\begin{picture}(0,#200)%
\put(0,0){\line(1,0){\value{z}}}%
\put(0,#200){\vector(1,0){\value{z}}}\end{picture}}}%
\truex{2400}%
\put(-\value{x},0){\makebox(0,0)[r]{${#1}$}}%
\end{picture}}%
\newcounter{x}%
\newcounter{y}%
\newcounter{z}%
\newcounter{horizontal}%
\newcounter{vertical}%
\newskip\itemlength%
\newskip\firstitem%
\newskip\seconditem%
\newcommand{\printarrow}{}%
\newcommand{\truex}[1]{%
\NUMBER=#1%
\multiply\NUMBER by 100%
\divide\NUMBER by \SCALE%
\setcounter{x}{\NUMBER}}%
\newcommand{\truey}[1]{%
\NUMBER=#1%
\multiply\NUMBER by 100%
\divide\NUMBER by \SCALE%
\setcounter{y}{\NUMBER}}%
\newcommand{\truez}[1]{%
\NUMBER=#1%
\multiply\NUMBER by 100%
\divide\NUMBER by \SCALE%
\setcounter{z}{\NUMBER}}%
\newcommand{\changecounters}[1]{%
\SOURCE=\ARROW%
\ARROW=\TARGET%
\settowidth{\itemlength}{#1}%
\ifdim \itemlength > 2800\unitlength%
\addtolength{\itemlength}{-2800\unitlength}%
\TARGET=\itemlength%
\divide\TARGET by 1310%
\multiply\TARGET by 100%
\divide\TARGET by \SCALE%
\else%
\TARGET=0%
\fi%
\ARROWLENGTH=5000%
\advance\ARROWLENGTH by -\SOURCE%
\advance\ARROWLENGTH by -\TARGET%
\advance\SOURCE by -\TARGET}%
\newcommand{\initialize}[1]{%
\LINE=0%
\COLUMN=0%
\WIDTH=0%
\ARROW=0%
\TARGET=0%
\changecounters{#1}%
\renewcommand{\printarrow}{#1}%
\begin{center}%
\vspace{10pt}%
\begin{picture}(0,0)}%
\newcommand{\DIAGV}[2]{%
\SCALE=#1%
\setlength{\unitlength}{655sp}%
\multiply\unitlength by \SCALE%
\divide\unitlength by 100%
\initialize{\mbox{$#2$}}}%
\newcommand{\n}[1]{%
\changecounters{\mbox{$#1$}}%
\put(\COLUMN,\LINE){\makebox(0,0){\printarrow}}%
\thinlines%
\renewcommand{\printarrow}{\mbox{$#1$}}%
\advance\COLUMN by 4000}%
\newcommand{\nn}[1]{%
\put(\COLUMN,\LINE){\makebox(0,0){\printarrow}}%
\thinlines%
\ifnum \WIDTH < \COLUMN%
\WIDTH=\COLUMN%
\else%
\fi%
\advance\LINE by -4000%
\COLUMN=0%
\ARROW=0%
\TARGET=0%
\changecounters{\mbox{$#1$}}%
\renewcommand{\printarrow}{\mbox{$#1$}}}%
\newcommand{\conclude}{%
\put(\COLUMN,\LINE){\makebox(0,0){\printarrow}}%
\thinlines%
\ifnum \WIDTH < \COLUMN%
\WIDTH=\COLUMN%
\else%
\fi%
\setcounter{horizontal}{\WIDTH}%
\setcounter{vertical}{-\LINE}%
\end{picture}}%
\newcommand{\diag}{%
\conclude%
\raisebox{0pt}[0pt][\value{vertical}\unitlength]{}%
\hspace*{\value{horizontal}\unitlength}%
\vspace{10pt}%
\end{center}%
\setlength{\unitlength}{1pt}}%
\newcommand{\diagv}[3]{%
\conclude%
\NUMBER=#1%
\rule{0pt}{\NUMBER pt}%
\hspace*{-#2pt}%
\raisebox{0pt}[0pt][\value{vertical}\unitlength]{}%
\hspace*{\value{horizontal}\unitlength}%6
\NUMBER=#3%
\advance\NUMBER by 10%
\vspace*{\NUMBER pt}%
\end{center}%
\setlength{\unitlength}{1pt}}%
\newcommand{\N}[1]%
{\raisebox{0pt}[7pt][0pt]{$#1$}}%
\newcommand{\crosslength}[2]{%
\settowidth{\firstitem}{#1}%
\settowidth{\seconditem}{#2}%
\ifdim\firstitem < \seconditem%
\itemlength=\seconditem%
\else%
\itemlength=\firstitem%
\fi%
\divide\itemlength by 2%
\hspace{\itemlength}}%
\begin{document}

%\nocite{*}

\title{On Classification of compact complex surfaces of class VII}

\author[G.Dloussky]{Georges Dloussky}
\address{Georges Dloussky\\ Aix-Marseille Universit\'e,
Institut de Math\'ematiques de Marseille (I2M), UMR 7373,
Site de Saint Charles,
3 place Victor Hugo,
Case 19,
13331 Marseille cedex 3}
\email{georges.dloussky@univ-amu.fr}

\thanks{G.D. is grateful to CIRGET of UQAM in Montr\'eal and I2M, UMR  7373 of Aix-Marseille University for the financial support.   The author   thanks Vestislav Apostolov  for insightful discussions and invitations all along the preparation of this article. The author is also grateful to Karl Oeljeklaus for discussions on his work \cite{OT08} in order to obtain the adapted result needed in the main theorem. }

\date{\today}

\begin{abstract} Let $S$ be a minimal compact complex surface with Betti numbers $b_1(S)=1$ and $b_2(S)\ge 1$ i.e. a compact surface in class VII$_0^+$. We show that if there exists a twisted logarithmic 1-form $\t\in H^0(S,\O^1(\log D)\ot \cal L_\l)$, where $D$ is a non zero divisor and $\cal L\in H^1(S,\bb C^\star)$,  then $S$ is a Kato surface. It is known that  $\l$ is in fact real and we show that  $\l\ge 1$ and unique if $S$ is not a Inoue-Hirzebruch surface. Moreover $\l=1$ if and only if $S$ is a Enoki surface. When $\l>1$ these conditions are equivalent to the existence of a negative PSH function $\hat \tau$ on the cyclic covering $p:\hat S\to S$ of $S$ which is PH outside $\hat D:=p^{-1}(D)$ with automorphic constant being the {\bf same} automorphy constant $\l$ for a suitable automorphism of $\hat S$. With previous results obtained with V.Apostolov it suggests a strategy to prove the GSS conjecture.

\end{abstract}

\maketitle

%\tableofcontents

\section{Introduction} The classification of minimal compact complex surfaces is incomplete for non K\"ahler surfaces $S$ in VII$_0^+$ class, that is minimal compact complex surfaces which have Betti numbers $b_1(S)=1$ and $b_2(S)\ge 1$. All known examples of such surfaces contain Global Spherical Shells (GSS) and are called Kato surfaces \cite{K77,D84}. The GSS conjecture asserts that there is no other surfaces. Given a surface $S$ in class VII$_0^+$ several criteria allow us to assert that $S$ is a Kato surface
\begin{itemize}
\item Existence on $S$ of $b_2(S)$ rational curves \cite{DOT03},
\item Existence of a non trivial section of a power of the anticanonical bundle twisted by a topologically trivial line bundle $K_S^{-m}\ot \cal L$, $m\ge 1$, \cite{Dl},
\item Existence of a non trivial global vector field \cite{DOT01},
\item $b_2(S)=1$ by Teleman theorem \cite{andrei1}.
\end{itemize}

In \cite{Brunella14} M.Brunella proved that if the cyclic covering $\pi:\hat S\to S$ of $S$ admits a negative plurisubhamonic (PSH) function $\hat u:\hat S\to [-\infty,0[$ which is pluriharmonic (PH) outside a non empty analytic subset and for a generator $\f:\hat S\to \hat S$, $\hat\tau\circ \f= \l \hat\tau$ for some positive $\l$, then $S$ is a (possibly) blown-up Kato surface.\\
It can be noticed that $\l$ cannot be equal to $1$ and replacing if necessary $\f$ by $\f^{-1}$, we may suppose that $\l>1$. Therefore, by \cite{DO}, $S$ is an intermediate Kato surface (i.e. the maximal divisor $D$ in $S$ is a cycle of rational curves with branches) or a Inoue-Hirzebruch surface.\\
Given a compact complex surface in class VII$_0^+$, the main results (see thm \ref{logformPSH}) of the article are 
\begin{enumerate}
\item The existence of a non trivial twisted logarithmic 1-form 
$$\t\in H^0(S, \O^1(\log D)\ot \cal L_\l),$$
 where $D$ is a non trivial divisor and $\cal L_\l\in H^1(S,\bb C^\star)\simeq \bb C^\star$ implies that $S$ is a Kato surface,
\item The existence of such a twisted logarithmic 1-form $\t$ is equivalent to the existence of a negative PSH function $\hat u:\hat S\to [-\infty,0[$ on the cyclic covering $\pi:\hat S\to S$ of $S$ which is PH outside $\hat D:=\pi^{-1}(D)$ such that for a suitable generator $\g:\hat S\to \hat S$ of the deck group of automorphism of $\hat S$, 
$$\hat u\circ \g=\lambda \hat u.$$
\item If $\l\ge 1$ $\l$ is unique and it is the same constant for both twisted logarithmic 1-form $\t$ and PSH function $\hat u$.
\end{enumerate}

A locally conformally symplectic structure (LCS) tamed by the complex structure is a 2-form $\o$ on $S$ such that $d_\a \o=0$, for a closed 1-form $\a$ and whose $(1,1)$ part $\o^{1,1}$ is positive.
In a sequence of articles \cite{lcs,lcs1,AD23} it is proved that the set of LCS structures tamed by the complex structure of $S\in {\rm VII}_0^+$, denoted by $\cal T(S)$ can be represented by a non empty  open interval in $]-\infty,0[$.\\
 In this article there are two ways to represent a holomorphically trivial line bundle: by closed one forms or by a real number. More precisely, let $\pi:\hat S\to S$ be the cyclic covering of $S$. For $\a$ a closed real 1-form on $S$, let $\hat \a=\pi^\star \a$ which is exact. If $\g:\hat S\to \hat S$ is a generator of the deck group of automorphisms there is a constant $c\in\bb R$ such that for $\hat \a=d\hat f$, $\hat f\circ\g-\hat f=c$. Replacing $\g$ by $\g^{-1}$ if necessary we may suppose that $c<0$. Consider the mapping 
 $$]-\infty,0[\to ]1,+\infty[,\quad  c\mapsto C=e^{-c}.$$
 This mapping gives the correspondance between a line bundle $\cal L_\a$, $\a\in H^1_{dR}(S,\bb R)$, and $\cal L_C$, $C>1$ setting $C=\f(c)$. Then we have
\begin{conj}   If $\cal T(S)=]a,b[$, $b\le 0$, we have $\l=\f(b)=e^{-b}\ge 1$.
\end{conj}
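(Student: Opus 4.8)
\begin{rem}[A strategy toward the conjecture]
By Theorem~\ref{logformPSH}, the conjecture reduces to producing --- from the hypothesis $\cal T(S)=]a,b[$ with $b\le0$ --- a non-trivial twisted logarithmic $1$-form $\t\in H^0(S,\O^1(\log D)\ot\cal L_\l)$ with non-zero divisor $D$ and automorphy constant $\l=e^{-b}$; equivalently, when $b<0$, a negative PSH function $\hat u:\hat S\to[-\infty,0[$ on the cyclic covering $\pi:\hat S\to S$, pluriharmonic off $\hat D=\pi^{-1}(D)$, with $\hat u\circ\g=e^{-b}\hat u$ for a suitable generator $\g$ of the deck group. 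Indeed, once such a $\t$ is available, Theorem~\ref{logformPSH} forces $S$ to be a Kato surface and shows that $\l$ is its automorphy constant, unique unless $S$ is an Inoue--Hirzebruch surface, so that $\l=e^{-b}=\f(b)$; since $b\le0$ this is $\ge1$, and equals $1$ exactly when $b=0$, i.e. when $S$ is an Enoki surface.

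The natural way to build this datum is to let the tamed LCS structures degenerate at the endpoint $b$. Pick $c_n\nearrow b$ and tamed LCS structures $(\o_n,\a_n)\in\cal T(S)$ whose Lee form $\a_n$ has associated constant $c_n$ in the sense recalled above; so $\hat\a_n=\pi^\star\a_n=d\hat f_n$ with $\hat f_n\circ\g-\hat f_n=c_n$, the $2$-forms $\hat\o_n:=e^{-\hat f_n}\pi^\star\o_n$ are $d$-closed on $\hat S$ with positive $(1,1)$-part, and $\g^\star\hat\o_n=e^{-c_n}\hat\o_n$ --- it is this relation that carries the prospective constant $e^{-b}=\f(b)$ through to the limit. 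After a suitable normalization one extracts a weak limit $\hat\o_\infty$, hence a limiting positive $(1,1)$-current $T$ on $\hat S$ with $\g^\star T=e^{-b}T$. Since $\cal T(S)$ is \emph{open}, $b\notin\cal T(S)$: if $T$ were smooth and strictly positive the limiting structure would descend to a tamed LCS structure on $S$ at the parameter $b$, a contradiction; hence $T$ degenerates. One then wants to identify the degeneration with a non-empty, $\g$-invariant, compact curve $\hat D$, and $T$ with $i\partial\bar\partial\hat u$ for a negative PSH potential $\hat u$ pluriharmonic off $\hat D$; the relation $\g^\star T=e^{-b}T$ normalizes the additive constant of $\hat u$ so that $\hat u\circ\g=e^{-b}\hat u$, and $\t:=\partial\hat u$ --- holomorphic off $\hat D$ with at most logarithmic poles along $\hat D$, transforming by $e^{-b}$ under $\g$ --- descends to the sought section over $S$.

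The crux, and the main obstacle, is exactly the step from the mere degeneration of $T$ to the statement that $T$ is carried by a non-empty divisor $\hat D$ and has a global potential pluriharmonic off $\hat D$. This breaks into: (i) controlling the masses of the $\hat\o_n$ along the ends of the open surface $\hat S$, so that the limit $T$ is non-trivial; (ii) excluding a diffuse or $0$-dimensional degeneration locus --- here one should use the $\g$-equivariance of $T$, with $\g$ acting freely and properly discontinuously, together with the structure theory of positive currents on class VII surfaces and the explicit geometry of $\hat S$, to force the locus to be a pure one-dimensional analytic set descending to a genuine non-zero divisor $D\subset S$; (iii) globalizing the local $i\partial\bar\partial$-potentials of $T$ into a single negative PSH function $\hat u$ on $\hat S$ and proving it pluriharmonic off $\hat D$, not merely off a pluripolar set, which needs a cohomological input on $\hat S$ and a growth estimate near $\hat D$; and (iv) tracking the automorphy through the limiting and regularity steps, so that the constant produced is precisely $e^{-b}$ and not merely some $\l\ge1$. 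Granting (i)--(iv), Brunella's theorem \cite{Brunella14} together with \cite{DO} already yield that $S$ is a blown-up Kato surface when $b<0$ --- hence, being minimal, a Kato surface --- and Theorem~\ref{logformPSH} upgrades this to the full statement; the case $b=0$ is the Enoki case.

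A natural preliminary, unconditional but only a consistency check, is to verify the conjecture when $S$ is already known to be a Kato surface: the interval $\cal T(S)=]a,b[$ is then given explicitly in \cite{lcs,lcs1,AD23} in terms of the contracting germ defining $S$, the constant $\l$ is read off from the maximal divisor via Theorem~\ref{logformPSH} and \cite{DO}, and one checks directly that $b=-\log\l$. The content of the strategy above is to remove the Kato hypothesis; combined with the non-emptiness of $\cal T(S)$ from \cite{lcs,lcs1,AD23}, it is this boundary analysis of $\cal T(S)$ that would close the GSS conjecture.
\end{rem}
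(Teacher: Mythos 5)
You have not proved the statement, and indeed you do not claim to: what you submit is a strategy with the decisive steps left open. That is worth saying plainly, because the statement is a \emph{conjecture} in the paper — the author offers no proof either, only the partial evidence that for hyperbolic Kato surfaces one knows $\f(b)\ge\l$ (\cite[section 4.3]{AD23}) and that for Enoki surfaces $\l=1=\f(0)$ since $\cal T(S)=]-\infty,0[$. Your plan (degenerate tamed LCS structures $(\o_n,\a_n)$ with Lee constants $c_n\nearrow b$, extract a $\g$-equivariant limit current $T$ with $\g^\star T=e^{-b}T$, show $T=i\part\bpart\hat u$ for a negative PSH function pluriharmonic off a non-zero divisor, then invoke Brunella and Theorem \ref{logformPSH}) is essentially the same programme the paper itself sketches as ``a strategy to prove the GSS conjecture''. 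The genuine gap is exactly your items (i)--(iv): there is no argument that the limit current is non-trivial (the forms $\o_n$ need a normalization and mass control on the non-compact covering $\hat S$, and nothing prevents the limit from escaping or from being carried by a diffuse set), no argument that the degeneration locus is a compact curve descending to a divisor $D\neq 0$, no construction of a \emph{global} negative potential that is pluriharmonic — not merely PSH — outside $\hat D$, and no proof that the automorphy constant survives the limit as exactly $e^{-b}$ rather than being lost in the normalization. Granting all of that is granting the conjecture; so the proposal records the intended route but closes none of it.

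One concrete misstatement should also be flagged: your last paragraph asserts that the conjecture can be ``checked directly'' when $S$ is already a Kato surface, because $\cal T(S)$ is ``given explicitly'' in \cite{lcs,lcs1,AD23}. According to the paper this is not so: for hyperbolic Kato surfaces only the inequality $\f(b)\ge\l$ is established in \cite{AD23}, i.e.\ the exact endpoint $b$ is not identified, and the equality $\l=\f(b)$ is open even in the Kato case (which is precisely why it is stated as a conjecture rather than as a theorem with the GSS conjecture as the only missing ingredient). Two smaller cautions: the limit object is a priori a $2$-current whose $(1,1)$-part is positive, not a positive $(1,1)$-current, so the passage to a PSH potential needs an extra argument; and in the Inoue--Hirzebruch case two twisting constants occur (lemma 5.4 of \cite{DO}), so even producing a logarithmic form with constant $e^{-b}$ would not immediately identify it with a previously given $\l$ there — your appeal to uniqueness via Theorem \ref{logformPSH} covers only the non-IH case.
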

For hyperbolic Kato surfaces, it is shown that $\f(b)\ge \l$ ($\f$ is decreasing) \cite[section 4.3]{AD23}. Moreover we have for Enoki surfaces \cite{enoki} $0=\l=\f(0)$ because in this case $\cal T(S)=]-\infty,0[$.\\
This conjecture gives the expected unique value for which we have to look for a PSH function, PH outside $D$ with automorphy constant $\l$. Therefore a strategy to prove the GSS conjecture would be to show the existence of a negative PSH function, as in  theorem \ref{periodesmemesigne}, with twisting constant being defined by the upper bound of the open interval $\cal T(S)$ .\\
 A sketch of proof is the following: Given a twisted logarithmic 1-form we prove first that the Camacho-Sad indices \cite{CS82} are negative, therefore this assumption in Lemma 6.18 of \cite{AD23} becomes superfluous. We obtain
 \begin{prop} Let $S$ be a minimal surface in the class VII$_0^+$ which is not a Inoue-Hirzebruch  surface. Suppose that $S$ admits a non-trivial logarithmic 1-form
 $$0\neq \t\in H^0(S,\O^1(\log D)\ot \cal L_\a), \quad [\a]\in H^1_{dR}(S,\bb R)$$
 with pole along a non-trivial connected divisor $D$. Then we can multiply $\t$ by a non zero complex number such that the residues $Res_\a(\t)$ are real valued and positive, and the real part ${\rm Re}(P_\t)$ of the current $P_\t$ associated to $\t$ is exact, i.e. satisfy ${\rm Re}(P_\t)=d_\a\tau$ for a degree zero current $\tau$ on $S$. Furthermore,
 $$d_\a d_\a^c\tau=2\pi Res_\a(\t)T_D>0$$
 and $[\a]\not\in \cal T(S)$.
 \end{prop}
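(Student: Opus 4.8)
The strategy is to isolate the one ingredient of \cite[Lemma 6.18]{AD23} that the hypothesis of a twisted logarithmic form supplies for free --- the negativity of the Camacho--Sad indices \cite{CS82} of the foliation $\cal F:=\ker\t$ along the components of $D$ --- to prove it, and then to quote that lemma. I would begin with the local picture. Since $\t$ has a genuine pole along every component $D_j$ of the connected divisor $D$, and $\cal L_\a$ restricts trivially to the rational curve $D_j$, the residue $r_j:=\Res_{D_j}(\t)$ is a nonzero constant, $D_j$ is $\cal F$--invariant, and the singularities of $\cal F$ on $D_j$ are exactly the nodes of $D$ lying on it together with the isolated zeros of $\t$. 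At a node $p=D_j\cap D_k$, writing $\t=r_j\,\tfrac{dz}{z}+r_k\,\tfrac{dw}{w}+(\text{holomorphic})$ in normal crossing coordinates, a direct computation gives ${\rm CS}(\cal F,D_j,p)=-r_k/r_j$. Hence ``all Camacho--Sad indices along $D$ are negative'' is \emph{equivalent} to ``all residues $r_j$ have the same argument and sign'', in which case multiplying $\t$ by a suitable complex number makes every $r_j$ a positive real number, so that $\Res_\a(\t)\,T_D$ becomes a positive current.

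The substantial point is therefore this sign statement, which I would attack on the cyclic covering $\pi:\hat S\to S$ (for the relevant surfaces, the universal cover). After checking that $\t$ may be taken $d_\a$--closed (the target $H^0(S,\O^2(\log D)\ot\cal L_\a)$ of $d_\a$ being trivial in the cases at hand) and writing $\pi^\star\a=d\hat f$, the form $\Theta:=e^{\hat f}\,\pi^\star\t$ is a \emph{closed} holomorphic logarithmic $1$--form on $\hat S$, with poles on $\hat D:=\pi^{-1}(D)$, satisfying $\g^\star\Theta=e^{c}\Theta$, where $c<0$ is the constant $\hat f\comp\g-\hat f$ and $\g$ is a generator of the deck group. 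First, $D$ must contain a cycle of rational curves: were $D$ a tree, a regular neighbourhood of the corresponding negative definite tree of rational curves would have rational homology sphere boundary, so $\Theta$ would be exact near it and all the $r_j$ would vanish, contradicting the poles of $\t$. Let $\G\subseteq D$ be the cycle (the self--intersections of its components are $\le-2$ and not all $-2$, by minimality and the negative definiteness of the intersection form). Restricting $\Theta$ to a neighbourhood of the lift of $\G$ and using the structure of such neighbourhoods in VII$_0^+$ surfaces --- where $\G$ contracts to a cusp and, after passing to a suitable cover, $\Theta$ is modelled by $a\,\tfrac{dx_1}{x_1}+b\,\tfrac{dx_2}{x_2}$ on a toric corner carrying the (positive hyperbolic) action of the monodromy in ${\rm SL}_2(\bb Z)$ --- the relation $\g^\star\Theta=e^{c}\Theta$ forces $(a,b)$ to be an eigenvector of that monodromy; since a power of it has strictly positive entries, Perron--Frobenius gives $a,b\in\bb R_{>0}$ after a common rescaling. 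As the residue of $\Theta$ along the component of the cycle of slope $(p_i,q_i)$ --- the positive integers of the continued fraction resolving the cusp --- is $p_ia+q_ib>0$, all residues along $\G$ are positive real multiples of one another; one then propagates this outward to the tree components of $D$, chain by chain, using the Camacho--Sad relation. This core step is the main obstacle: reading off the positivity of the residues from the local holomorphic structure near the cycle is where minimality, the VII$_0^+$ hypothesis and the exclusion of Inoue--Hirzebruch surfaces (a single cycle, positive hyperbolic monodromy) genuinely enter, and it is exactly what renders the corresponding standing hypothesis of \cite[Lemma 6.18]{AD23} superfluous.

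With the residues normalised to be positive reals, the hypotheses of \cite[Lemma 6.18]{AD23} are satisfied, and applying it produces the degree zero current $\tau$ with $\Re(P_\t)=d_\a\tau$ and $d_\a d_\a^c\tau=2\pi\,\Res_\a(\t)\,T_D$; the right--hand side is strictly positive because $\Res_\a(\t)>0$ and $T_D$ is the integration current of $D$, and the same lemma gives $[\a]\notin\cal T(S)$, since an LCS form tamed by $J$ representing $[\a]$ would pair against $\tau$ and $T_D$ to yield a contradiction.
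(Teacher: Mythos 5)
Your top-level reduction is the same as the paper's: show that the Camacho--Sad indices of $\cal F=\ker\t$ along $D$ are negative (equivalently, that all residues become positive after multiplying $\t$ by one complex constant), then invoke Lemma 6.18 of \cite{AD23}. The gap is in your core step, the positivity of the residues along the cycle. You derive it from the claim that a neighbourhood of (the lift of) the cycle $\G$ in $\hat S$ carries the standard toric cusp model, equivariantly for the deck transformation, so that the residue vector is an eigenvector of the cusp monodromy in ${\rm SL}_2(\bb Z)$ and Perron--Frobenius applies. For a surface in VII$_0^+$ that is not known to be a Kato surface this local model is exactly the kind of rigidity statement one is not allowed to assume (it is close in spirit to what is at stake in the GSS problem), and you give no argument for it; the paper deliberately avoids any such claim and extracts everything from the Camacho--Sad relations alone. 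Moreover, even granting a transfer-matrix picture, the matrix is not the ${\rm SL}_2(\bb Z)$ monodromy: at a cycle curve $D_i$ meeting a branch the three-term recursion $r_{i+1}=\d_i' r_i-r_{i-1}$ involves the branch-corrected coefficient $\d_i'=-D_i^2+CS(\cal F,D_i,D_i\cap C_{k-1})$, a non-integer real number in $(1,-D_i^2)$ (Lemma \ref{coeffarbre}); the factors $\left(\begin{smallmatrix}\d' & -1\\ 1 & 0\end{smallmatrix}\right)$ are not non-negative, so Perron--Frobenius does not apply as stated, and for $\d_i'>1$ the eigenvalues can even fail to be real (the paper's opening remark about $p=2$, $\d=(1,1)$ is the borderline case). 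Proving positivity of the solution of the system $(E_p)$ under the hypotheses actually available is precisely the substance of the paper: reality of the indices is imported from \cite[Lemma 6.17]{AD23}, and positivity is obtained through the polynomial machinery of Lemmas \ref{positif}, \ref{equationdegre2}, \ref{discriminant} and the key estimate $P_p(\d)-P_{p-2}(\d)>0$ of Lemma \ref{discriminant>2}, yielding Proposition \ref{coeffpositifs}. Your proposal replaces this analysis by an appeal to an unproved geometric normal form, so the main obstacle you yourself identify is not overcome.

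Two smaller points. First, your propagation of signs along the branches ``chain by chain, using the Camacho--Sad relation'' needs the fact that $\cal F$ has no singularities on $D$ other than the nodes; you explicitly allow ``isolated zeros of $\t$'' on the components, and with such zeros the tip computation $CS(\cal F,C_0,C_0\cap C_1)=C_0^2$ and the subsequent induction break down. This is exactly the content of the paper's Lemma \ref{onlysing}, which you would have to prove (it is not hard, but it is not free). Second, the cyclic covering $\hat S\to S$ is in general not the universal cover, and your parenthetical claim that the cycle's components cannot all have self-intersection $-2$ is an Enoki-type issue that the intro statement does not exclude; neither remark is fatal, but both would need cleaning up in a complete argument.
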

 Notice that the support of $d_\a d_\a^c\tau$ is $D$.
 With this proposition we find a PSH function $\hat u=e^{-\hat f}\hat\tau$, PH outside $\hat D$. Conversely, $i\part \hat u$ is holomorphic outside $\hat D$ since $\hat u$ is there PH. The end of the proof consists to extend $i\part \hat u$ on whole $\hat S$ into a logarithmic 1-form.\\

\section{Negativity of Camacho-Sad coefficients}
We start with an explicit example and after we use a family of polynomials, already used in \cite{DOT01} to solve a system of equations which determine the Camacho-Sad indices of the foliation defined  by a global twisted logarithmic 1-form. We want to show that this system of equations has positive solutions. When $p=2$ and $\d=(1,1)$  the solution is not real, therefore we have to use the geometric properties of the foliation.
\subsection{An explicit example}
\begin{example} We use notations and explicit construction of a Kato surface used in \cite{D84}. Consider the sequence $(42 2 3)=(s_2 r_1 s_1)$ of opposite self-intersections. The associate Kato surface has four rational curves of self-intersections 
$$(D_0^2,D_1^2,D_2^2,D_3^2)=(-4,-2,-2,-3).$$
Its opposite  intersection matrix is
$$M(S)=\left( \begin{matrix} 4&0&0&-1\\0&2&-1&-1\\0&-1&2&-1\\-1&-1&-1&3\end{matrix}\right)$$
therefore we have a cycle with three curves $D_1$, $D_2$, $D_3$ and a tree $D_0$ which intersects $D_3$. The normal form of the contraction $F(z_1,z_2)=(\star , z_2^k)$, associated to the corresponding sequence of blow-ups, has a second member $z_2^k$ where 
$$k=k(S)=\sqrt{\det M(S)}+1=\sqrt{9}+1=4$$
by a result of C.Favre (see \cite{DO} or \cite{Fav00} for the normal form of the contraction).\\
We compute now Camacho-Sad indices setting 
$$\a_1=CS(\cal F, D_1, D_1\cap D_{2}), \quad \a_2=CS(\cal F,D_2,D_2\cap D_3), \quad \a_3=CS(\cal F,D_3,D_3\cap D_1)$$
hence we have chosen an orientation of the cycle. We have, by Camacho-Sad formula
$$\begin{array}{ccccl}
-4&=&D_0^2&=&CS(\cal F, D_0, D_0\cap D_3)\\
\\
 -2&=&D_1^2&=&CS(\cal F,D_1,D_1\cap D_2)+CS(\cal F,D_1,D_1\cap D_3)=\dps \a_1+\frac{1}{\a_3}\\
\\
 -2&=&D_2^2&=&CS(\cal F,D_2,D_2\cap D_3)+CS(\cal F,D_2,D_2\cap D_1)=\dps \a_2+\frac{1}{\a_1}\\
\\
-3&=&D_3^2&=&CS(\cal F,D_3,D_3\cap D_1)+CS(\cal F,D_3,D_3\cap D_2)+CS(\cal F,D_3,D_3\cap D_0)\\
\\
&&&=&\dps \a_3+\frac{1}{\a_2}-\frac{1}{4}
\end{array}$$

We obtain a system of equations for the cycle
$$\left\{\begin{array}{ccc}
\dps\a_1+\frac{1}{\a_3}&=&-2\\
\\
\dps \a_2+\frac{1}{\a_1}&=&-2\\
\\
\dps \a_3+\frac{1}{\a_2}&=&\dps -\frac{11}{4}
\end{array}\right.$$
We shall show that Camacho-Sad indices are negative. For conveniency we change all the signs ($\a_i$ is changed into $-\a_i$) and we solve
$$\left\{\begin{array}{ccc}
\dps\a_1+\frac{1}{\a_3}&=&2\\
\\
\dps \a_2+\frac{1}{\a_1}&=&2\\
\\
\dps \a_3+\frac{1}{\a_2}&=&\dps \frac{11}{4}
\end{array}\right.$$
and we shall show that all the solutions are positive. For example look at $\a_1$: it is equal to a periodic continued fraction or is a solution of an equation of degree two:
$$\a_1=2-\frac{1}{\a_3}=2-1/ \frac{11}{4}-1/2-\frac{1}{\a_1}$$
$$6\a_1^2-13\a_1+6=0, \quad \D=25$$
\begin{itemize}
\item If $\a_1=\frac{3}{2}$ then $\a_2=2$, $\a_3=\frac{4}{3}$.\\
\item If $\a_1=\frac{2}{3}$, then $\a_2=\frac{3}{4}$, $\a_3=\frac{1}{2}$.
\end{itemize}
The two sets of solutions are inverse of each other, corresponding of the same foliation but changing the orientation of the cycle. By \cite{DO} there is only one foliation on $S$.\\
We notice that
$$\frac{3}{2}.2.\frac{4}{3}= 4=k(S), \quad \frac{2}{3}.\frac{3}{4}.\frac{1}{2}=\frac{1}{4}=\frac{1}{k(S)}.$$
It gives the multiplicative factor, hence the coefficient of the torsion line bundle of the logarithmic $1$-form. In these relations, only Camacho-Sad indices of intersection points of the cycle come into play and no other singularities. However the values of these indices depend also on the  tree.\\
These relation reveal that, in general, for a root of one of the degree  two equations, its inverse is a root of another degree two equation. Therefore the roots exist by pairs of the same sign.
\end{example}

\subsection{Algebraic part of the proof}
\begin{lem} \label{inversion} We consider the family of polynomials
$$P_0=1,\  P_1(X_1)=X_1,\  P_i(X_1,\ldots,X_i)=X_1P_{i-1}(X_2,\ldots,X_i)-P_{i-2}(X_3,\ldots,X_i),\ i\ge 2$$
Then for any $n\ge 1$, $P_n(X_1,\ldots,X_n)=P_n(X_n,\ldots,X_1)$.
\end{lem}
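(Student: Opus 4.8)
The plan is to prove the palindromic identity $P_n(X_1,\dots,X_n)=P_n(X_n,\dots,X_1)$ by strong induction on $n$, but the naive induction stumbles because the defining recursion
$$P_i(X_1,\dots,X_i)=X_1P_{i-1}(X_2,\dots,X_i)-P_{i-2}(X_3,\dots,X_i)$$
expands $P_i$ from the \emph{left}, whereas reversing the variables turns this into an expansion from the \emph{right}. So the real content is to establish the "right-hand" recursion
$$P_i(X_1,\dots,X_i)=P_{i-1}(X_1,\dots,X_{i-1})X_i-P_{i-2}(X_1,\dots,X_{i-2}),\qquad i\ge 2,$$
since that relation is visibly the mirror image of the defining one. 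Once both recursions are available, the palindrome property follows immediately: assuming $P_{i-1}$ and $P_{i-2}$ are palindromic, apply the left recursion to $P_i(X_n,\dots,X_1)$, namely $P_i(X_n,\dots,X_1)=X_nP_{i-1}(X_{n-1},\dots,X_1)-P_{i-2}(X_{n-2},\dots,X_1)$, use the induction hypothesis to reverse each factor, and recognize the result as the right recursion for $P_i(X_1,\dots,X_i)$.

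The first step, then, is to prove the right recursion. I would do this by induction on $i$, checking the base cases $i=2$ (where $P_2(X_1,X_2)=X_1X_2-1$ is symmetric and both recursions agree) and $i=3$ directly, then for the inductive step expand the $P_{i-1}$ appearing in the defining recursion for $P_i$ by \emph{its} defining (left) recursion, and expand the $P_{i-2}$ term similarly, collecting terms so as to recognize a right recursion one step down. Concretely, write $P_i(X_1,\dots,X_i)=X_1P_{i-1}(X_2,\dots,X_i)-P_{i-2}(X_3,\dots,X_i)$ and substitute the right recursion (already known for index $i-1<i$ by the inner induction) for $P_{i-1}(X_2,\dots,X_i)$, giving a term ending in $X_i$ times $P_{i-2}(X_2,\dots,X_{i-1})$ minus $P_{i-3}(X_2,\dots,X_{i-2})$; reassembling with the defining recursion for $P_{i-1}(X_1,\dots,X_{i-1})$ yields exactly $P_{i-1}(X_1,\dots,X_{i-1})X_i-P_{i-2}(X_1,\dots,X_{i-2})$.

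Alternatively — and this may be cleaner — one can avoid isolating the right recursion and instead prove \emph{simultaneously} by induction on $n$ the two statements "$P_n$ is palindromic" and "$P_n(X_1,\dots,X_n)=P_{n-1}(X_1,\dots,X_{n-1})X_n-P_{n-2}(X_1,\dots,X_{n-2})$", since each feeds the other. A third route is to produce a closed form: $P_n(X_1,\dots,X_n)$ is the continuant, equal to the determinant of the tridiagonal matrix with $X_1,\dots,X_n$ on the diagonal, $1$'s on the superdiagonal and $-1$'s on the subdiagonal (or with $\pm i$ symmetrically placed), and the palindrome property is then the invariance of a determinant under the anti-transpose $M\mapsto JM^{\mathsf T}J$ where $J$ is the reversal permutation matrix. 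The main obstacle in all approaches is purely bookkeeping: matching up which blocks of consecutive variables appear as arguments of which $P_j$ after each substitution, and the determinant formulation sidesteps this entirely, so if the continuant identity is acceptable to invoke or quick to verify, that is the route I would take; otherwise the simultaneous two-part induction is the safest.
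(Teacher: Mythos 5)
Your proposal is correct and is essentially the paper's own argument: the paper runs a single two-step induction on the identity $P_i(X_1,\ldots,X_i)=X_iP_{i-1}(X_{i-1},\ldots,X_1)-P_{i-2}(X_{i-2},\ldots,X_1)$, which is exactly your ``right-hand'' recursion with the lower-order blocks already reversed (so it carries the palindrome property along with it), and the inductive step is the same expand--regroup computation you describe. Your continuant/tridiagonal-determinant alternative would also work, but it is not the route the paper takes.
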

\begin{proof} Remark that the definition gives
$$P_i(X_i,\ldots,X_1)=X_iP_{i-1}(X_{i-1},\ldots,X_1)-P_{i-2}(X_{i-2},\ldots,X_1).$$
Therefore we have to show by induction on $n\ge 2$ that
$$P_i(X_1,\ldots,X_i)=X_iP_{i-1}(X_{i-1},\ldots,X_1)-P_{i-2}(X_{i-2},\ldots,X_1),$$
It is easy to check that for $n=2,3$ the two expressions give the same polynomials. 
Suppose that it is true for $i$ and $i-1$; then for $i+1$ we have by the induction hypothesis
$$\begin{array}{ccl}
P_{i+1}(X_1,\ldots,X_{i+1})&=&X_1P_i(X_2,\ldots,X_{i+1})-P_{i-1}(X_3,\ldots,X_{i+1})\\
\\
&=&X_1\bigl[ X_{i+1}P_{i-1}(X_i,\ldots,X_2)-P_{i-2}(X_{i-1},\ldots,X_2)\bigr]\\
\\
&&-\bigl[X_{i+1}P_{i-2}(X_i,\ldots,X_3)- P_{i-3}(X_{i-1},\ldots,X_3)\bigr]\\
\\
&=&X_{i+1}\bigl[X_1P_{i-1}(X_i,\ldots,X_2)- P_{i-2}(X_{i},\ldots,X_3)\bigr]\\
\\
&&-\bigl[X_1P_{i-2}(X_{i-1},\ldots,X_2)- P_{i-3}(X_{i-1},\ldots,X_3)\bigr]\\
\\
&=&X_{i+1}P_i(X_i,\ldots,X_1) - P_{i-1}(X_{i-1},\ldots,X_1)

\end{array}$$
\end{proof}

\begin{lem} \label{cyclic} For $p\ge 2$, the polynomials
$$Q_p(X_0,\ldots,X_{p-1}):=P_p(X_0,\ldots,X_{p-1})-P_{p-2}(X_1,\ldots,X_{p-2})$$
 are invariant under cyclic permutation.
\end{lem}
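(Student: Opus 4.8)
The plan is to reduce to a single cyclic shift and then expand $P_p$ from both of its ends. Since the cyclic group $\Z/p\Z$ is generated by the shift $\s\colon (X_0,\ldots,X_{p-1})\mapsto (X_1,\ldots,X_{p-1},X_0)$, it suffices to establish the single identity
$$Q_p(X_0,\ldots,X_{p-1})=Q_p(X_1,\ldots,X_{p-1},X_0).$$
First I would expand $P_p(X_0,\ldots,X_{p-1})$ by the defining recursion, peeling $X_0$ off the front, to get
$$P_p(X_0,\ldots,X_{p-1})=X_0\,P_{p-1}(X_1,\ldots,X_{p-1})-P_{p-2}(X_2,\ldots,X_{p-1}).$$
Then I would expand $P_p(X_1,\ldots,X_{p-1},X_0)$ by the \emph{reversed} recursion established in the course of the proof of Lemma~\ref{inversion}, namely $P_i(A_1,\ldots,A_i)=A_i\,P_{i-1}(A_{i-1},\ldots,A_1)-P_{i-2}(A_{i-2},\ldots,A_1)$, peeling $X_0$ off the back, to get
$$P_p(X_1,\ldots,X_{p-1},X_0)=X_0\,P_{p-1}(X_{p-1},\ldots,X_1)-P_{p-2}(X_{p-2},\ldots,X_1).$$

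Next, Lemma~\ref{inversion} lets me rewrite $P_{p-1}(X_{p-1},\ldots,X_1)=P_{p-1}(X_1,\ldots,X_{p-1})$ and $P_{p-2}(X_{p-2},\ldots,X_1)=P_{p-2}(X_1,\ldots,X_{p-2})$, so that both expansions share the leading term $X_0\,P_{p-1}(X_1,\ldots,X_{p-1})$. Subtracting them, this term cancels and one is left with
$$P_p(X_0,\ldots,X_{p-1})-P_p(X_1,\ldots,X_{p-1},X_0)=P_{p-2}(X_1,\ldots,X_{p-2})-P_{p-2}(X_2,\ldots,X_{p-1}),$$
and moving the two $P_{p-2}$ terms to their respective sides gives exactly $Q_p(X_0,\ldots,X_{p-1})=Q_p(X_1,\ldots,X_{p-1},X_0)$, which completes the argument.

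I do not expect a genuine obstacle here: once one has the reversed recursion from Lemma~\ref{inversion}, the computation is a two-line cancellation. The only points requiring a little care are the degenerate small cases $p=2$ (and $p=3$), where $P_{p-2}$ reduces to the constant $P_0=1$ and some of the argument strings above are empty; one checks directly that the forward recursion, the reversed recursion, and Lemma~\ref{inversion} all still hold in the form used (indeed $Q_2=X_0X_1-2$ is symmetric and $Q_3=X_0X_1X_2-X_0-X_1-X_2$ is even fully symmetric), so the general computation applies verbatim. I would also add the remark that for $p\ge 4$ the polynomial $Q_p$ is cyclically but not fully symmetric — for instance $Q_4=X_0X_1X_2X_3-(X_0X_1+X_1X_2+X_2X_3+X_3X_0)+2$ — which is why the use of the \emph{reversed} recursion (rather than a second application of the forward one) is essential in step two.
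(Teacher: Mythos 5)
Your proof is correct and is essentially the paper's own argument: reduce to a single cyclic shift, expand $P_p(X_0,\ldots,X_{p-1})$ by the forward recursion and $P_p(X_1,\ldots,X_{p-1},X_0)$ by the reversed recursion, invoke Lemma~\ref{inversion} to identify the two leading terms, and observe that the remaining $P_{p-2}$ terms cancel against those in the definition of $Q_p$. The small-case and (a)symmetry remarks are accurate but not needed; no gap here.
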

\begin{proof} It is sufficient to prove that 
$$D:=Q_p(X_0,\ldots,X_{p-1})-Q_p(X_1,\ldots,X_{p-1},X_0)=0.$$
 In fact, by lemma \ref{inversion}
$$\begin{array}{ccl}
D&:=&\bigl(P_p(X_0,\ldots,X_{p-1})-P_{p-2}(X_1,\ldots,X_{p-2})\bigr)\\
&&\\
&& - \bigl(P_p(X_1,\ldots,X_{p-1},X_0)-P_{p-2}(X_2,\ldots,X_{p-1})\bigr)\\
&&\\
&=&\bigl(X_0P_{p-1}(X_1,\ldots,X_{p-1})-P_{p-2}(X_2,\ldots,X_{p-1})-P_{p-2}(X_1,\ldots,X_{p-2})\bigr)\\
&&\\
&&-\bigl(X_0P_{p-1}(X_1,\ldots,X_{p-1})-P_{p-2}(X_1,\ldots,X_{p-2})-P_{p-2}(X_2,\ldots,X_{p-1})\bigr)\\
&&\\
&=&0.
\end{array}$$
\end{proof}

\begin{example} $P_0=1$, $P_1(X_0)=X_0$, $P_2(X_0,X_1)=X_0X_1-1$, 
$$P_3(X_0,X_1,X_2)=X_0X_1X_2-X_0-X_2,$$
$$P_4(X_0,X_1,X_2,X_3)=X_0X_1X_2X_3-X_0X_1-X_0X_3-X_2X_3 +1,$$
$$P_5= X_0X_1X_2 X_3X_4 - X_0X_1X_2 - X_2X_3X_4  - X_3X_4X_0  -  X_4X_0X_1+X_0+X_2+X_4$$
$$\begin{array}{ccl}
P_6&=&X_0X_1X_2 X_3X_4X_5\\
&& -X_0X_1X_2X_3 - X_2X_3X_4X_5 - X_3X_4X_5X_0 - X_4X_5X_0X_1 - X_5X_0X_1X_2\\
&& + X_0X_1 + X_2X_3 + X_4X_5 + X_5X_0 + X_0X_3 + X_2X_5 -1
\end{array}$$
\vspace{1cm}
$$Q_2(X_0,X_1)=X_0X_1-2$$
$$Q_3(X_0,X_1,X_2)=X_0X_1X_2-X_0-X_1-X_2$$
$$Q_4(X_0,X_1,X_2,X_3)=X_0X_1X_2X_3-X_0X_1-X_1X_2-X_2X_3-X_3X_0 +2$$
$$\begin{array}{ccl}
Q_5&=&X_0X_1X_2X_3X_4\\
&& - X_0X_1X_2 - X_1X_2X_3 - X_2X_3X_4 - X_3X_4X_0 - X_4X_0X_1\\
&&  + X_0+X_1+X_2+X_3+X_4
\end{array}$$
Notice that new monomials appear in $Q_6$:
$$\begin{array}{ccl}
Q_6&=&X_0X_1X_2X_3X_4X_5\\
&&-X_0X_1X_2X_3-X_1X_2X_3X_4 - X_2X_3X_4X_5 - X_3X_4X_5X_0 - X_4X_5X_0X_1 -X_5X_0X_1X_2\\
&&+X_0X_1+X_1X_2+X_2X_3+X_3X_4+X_4X_5+X_5X_0+\mathbf {X_0X_3+X_1X_4+X_2X_5} -2
\end{array}$$
\end{example}

\begin{lem} \label{positif} If $\d_i$ are real numbers such that $\d_i\ge 2$ for $i\ge 0$, then for any $j\ge 0$,
$$\frac{P_{j+1}(\d_i,\ldots,\d_{i+j})}{P_{j}(\d_i,\ldots,\d_{i+j-1})}\ge \frac{j+2}{j+1}>1, $$

$$P_{j+1}(\d_{i},\ldots,\d_{i+j})\ge j+2,$$
and for $p\ge 2$,
$$P_p(\d_i,\ldots,\d_{i+p-1})-P_{p-2}(\d_{i+1},\ldots,\d_{i+p-2})\ge 2.$$
Moreover, if $\d=(2,\ldots,2)$ then the previous inequalities are equalities.
\end{lem}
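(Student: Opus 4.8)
The plan is to prove all three assertions of Lemma \ref{positif} simultaneously by induction on $j$ (for the first two) and on $p$ (for the third), using the three-term recursion that defines the $P_i$ together with Lemma \ref{inversion} so that the ``forward'' recursion $P_{i}(X_1,\ldots,X_i)=X_iP_{i-1}(X_{i-1},\ldots,X_1)-P_{i-2}(X_{i-2},\ldots,X_1)$ is available as well. I would first record the trivial base cases: $P_0=1$, $P_1(\d_i)=\d_i\ge 2$, so $P_1/P_0=\d_i\ge 2=\tfrac{2}{1}$, and $P_1\ge 2$. Then I would prove the ratio bound inductively: assuming $P_{j}(\d_i,\ldots,\d_{i+j-1})\ge j+1>0$ and $P_{j}(\d_i,\ldots)/P_{j-1}(\d_i,\ldots)\ge \tfrac{j+1}{j}$, apply the defining recursion to get
\[
\frac{P_{j+1}(\d_i,\ldots,\d_{i+j})}{P_{j}(\d_i,\ldots,\d_{i+j-1})}
=\d_i-\frac{P_{j-1}(\d_{i+1},\ldots,\d_{i+j})}{P_{j}(\d_i,\ldots,\d_{i+j-1})}.
\]
Here the key point is that the numerator of the subtracted fraction involves the variables $\d_{i+1},\ldots,\d_{i+j}$ while the denominator involves $\d_i,\ldots,\d_{i+j-1}$; these are not nested the same way, so to control the quotient I would instead use the \emph{reversed} form from Lemma \ref{inversion}, namely $P_{j+1}(\d_i,\ldots,\d_{i+j})=\d_{i+j}P_{j}(\d_{i+j-1},\ldots,\d_i)-P_{j-1}(\d_{i+j-2},\ldots,\d_i)$, so that $P_{j+1}$ is written in terms of $P_j$ and $P_{j-1}$ on the \emph{same} initial segment of reversed variables, and the induction hypothesis applies cleanly:
\[
\frac{P_{j+1}}{P_{j}}=\d_{i+j}-\frac{P_{j-1}(\d_{i+j-2},\ldots,\d_i)}{P_{j}(\d_{i+j-1},\ldots,\d_i)}\ge 2-\frac{j}{j+1}=\frac{j+2}{j+1}.
\]
Combining this with the chain $P_{j+1}=\frac{P_{j+1}}{P_j}\cdot\frac{P_j}{P_{j-1}}\cdots\frac{P_1}{P_0}\cdot P_0\ge \prod_{m=0}^{j}\frac{m+2}{m+1}=j+2$ gives the second inequality; along the way this also re-confirms positivity of every $P_m$ on the relevant arguments, which is what makes dividing legitimate.

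For the third inequality I would again invoke Lemma \ref{inversion} (or the symmetry $P_n(X_1,\ldots,X_n)=P_n(X_n,\ldots,X_1)$) to write, for $p\ge 2$,
\[
P_p(\d_i,\ldots,\d_{i+p-1})-P_{p-2}(\d_{i+1},\ldots,\d_{i+p-2})
=\d_i P_{p-1}(\d_{i+1},\ldots,\d_{i+p-1})-2P_{p-2}(\d_{i+1},\ldots,\d_{i+p-2}),
\]
using the forward recursion to expand the first term and noting that the reversed/forward expressions for $P_{p-2}$ on the block $\d_{i+1},\ldots,\d_{i+p-2}$ agree by the symmetry. Now $\d_i\ge 2$ gives a lower bound of $2\bigl(P_{p-1}(\d_{i+1},\ldots,\d_{i+p-1})-P_{p-2}(\d_{i+1},\ldots,\d_{i+p-2})\bigr)$, and since $P_{p-1}/P_{p-2}\ge \tfrac{p}{p-1}>1$ from the first part, the bracket is positive; more precisely $P_{p-1}-P_{p-2}\ge P_{p-2}\bigl(\tfrac{p}{p-1}-1\bigr)=\tfrac{1}{p-1}P_{p-2}\ge \tfrac{1}{p-1}(p-1)=1$, so the whole expression is $\ge 2$. (For $p=2$ one checks directly $P_2(\d_i,\d_{i+1})-P_0=\d_i\d_{i+1}-1-1\ge 4-2=2$.) Finally, the equality statement is immediate: when every $\d_i=2$, the recursion for $P_j$ at the constant string $2$ is $P_{j+1}=2P_j-P_{j-1}$ with $P_0=1,P_1=2$, whose solution is $P_j=j+1$, and then every inequality above collapses to an equality, $\tfrac{j+2}{j+1}$, $j+2$, and $(p+1)-(p-1)=2$ respectively.

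The only real subtlety — the ``hard part'' — is bookkeeping of which initial segment of which (possibly reversed) variable string each $P_m$ is evaluated on, since the naive recursion and the Lemma \ref{inversion} recursion peel off variables from opposite ends; the induction hypotheses must be stated for \emph{all} index windows $(\d_i,\ldots,\d_{i+j})$ simultaneously (which the statement already does, with the free parameter $i$), so that both peeling directions stay inside the inductive regime. Once that uniformity is set up, every step is a one-line application of $\d_i\ge 2$ and a previously established bound, and nothing deeper than the elementary estimate $2-\tfrac{j}{j+1}=\tfrac{j+2}{j+1}$ is needed. I expect no genuine obstacle beyond being careful that all denominators are positive, which the simultaneous induction delivers for free.
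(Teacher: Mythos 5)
Your treatment of the first two inequalities is essentially the paper's own argument: peel off the \emph{last} variable using the reversed recursion supplied by Lemma \ref{inversion}, get $\frac{P_{j+1}}{P_j}=\d_{i+j}-\frac{P_{j-1}}{P_j}\ge 2-\frac{j}{j+1}=\frac{j+2}{j+1}$, and telescope the ratios to get $P_{j+1}\ge j+2$; that part is correct (your first displayed ``defining recursion'' identity is not right as written, since the recursion gives $\d_i P_j(\d_{i+1},\ldots,\d_{i+j})-P_{j-1}(\d_{i+2},\ldots,\d_{i+j})$, but you abandon it immediately, so no harm done). The equality case via $P_j(2,\ldots,2)=j+1$ also matches the paper.

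The third inequality, however, rests on a false identity. You claim
$$P_p(\d_i,\ldots,\d_{i+p-1})-P_{p-2}(\d_{i+1},\ldots,\d_{i+p-2})=\d_i P_{p-1}(\d_{i+1},\ldots,\d_{i+p-1})-2P_{p-2}(\d_{i+1},\ldots,\d_{i+p-2}),$$
justifying it ``by symmetry''. But the forward recursion produces $P_{p-2}(\d_{i+2},\ldots,\d_{i+p-1})$, and Lemma \ref{inversion} only identifies a polynomial with itself on the \emph{reversed} string, not on the shifted window $(\d_{i+1},\ldots,\d_{i+p-2})$: these are different sets of variables. Already for $p=3$ the two sides of your identity differ by $\d_{i+1}-\d_{i+2}$, so the step fails for non-constant $\d$. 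The conclusion is still reachable with your own tools: from the correct expansion and $\d_i\ge 2$ the left-hand side is at least
$$\bigl(P_{p-1}(\d_{i+1},\ldots,\d_{i+p-1})-P_{p-2}(\d_{i+2},\ldots,\d_{i+p-1})\bigr)+\bigl(P_{p-1}(\d_{i+1},\ldots,\d_{i+p-1})-P_{p-2}(\d_{i+1},\ldots,\d_{i+p-2})\bigr),$$
and each bracket is $\ge 1$ by your ratio bound (for the first bracket after reversing the string via Lemma \ref{inversion}). The paper avoids the issue altogether by factoring multiplicatively: the difference equals $P_{p-2}(\d_{i+1},\ldots,\d_{i+p-2})\bigl\{\frac{P_p(\d_i,\ldots,\d_{i+p-1})}{P_{p-1}(\d_i,\ldots,\d_{i+p-2})}\cdot\frac{P_{p-1}(\d_i,\ldots,\d_{i+p-2})}{P_{p-2}(\d_{i+1},\ldots,\d_{i+p-2})}-1\bigr\}\ge (p-1)\bigl\{\frac{p+1}{p}\cdot\frac{p}{p-1}-1\bigr\}=2$. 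Either repair is one line, but as written your third step is incorrect.
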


\begin{proof} By  induction on $j\ge 0$. 
If $j=0$, $\dps\frac{P_1(\d_{i})}{P_0}\ge 2$. 
Suppose the inequality fullfilled for $j\ge 0$, then
$$\begin{array}{lcl}
\dps\frac{P_{j+2}(\d_{i},\ldots,\d_{i+j+1})}{P_{j+1}(\d_{i},\ldots,\d_{i+j})}& = &\dps\frac{\d_{i+j+1}P_{j+1}(\d_{i},\ldots,\d_{i+j})- P_{j}(\d_{i},\ldots,\d_{i+j-1})}{P_{j+1}(\d_{i},\ldots,\d_{i+j})}\\
\\
&=&\dps\d_{i+j+1}-\dps\frac{P_{j}(\d_{i},\ldots,\d_{i+j-1})}{P_{j+1}(\d_{i},\ldots,\d_{i+j})}\\
\\
&\ge&\dps\d_{i+j+1}-\frac{j+1}{j+2}\ge \dps 2-\frac{j+1}{j+2} = \frac{j+3}{j+2}.
\end{array}$$
Therefore
$$P_{j+1}(\d_i,\ldots,\d_{i+j})=\frac{P_{j+1}(\d_i,\ldots,\d_{i+j})}{P_{j}(\d_i,\ldots,\d_{i+j-1})}\cdots \frac{P_1(\d_i)}{P_0}\ge \frac{j+2}{j+1}\cdots \frac{2}{1}=j+2.$$
Using previous inequalities,
$$\begin{array}{l}
P_p(\d_i,\ldots,\d_{i+p-1})-P_{p-2}(\d_{i+1},\ldots,\d_{i+p-2}) \\
\\
\quad = P_{p-2}(\d_{i+1},\ldots,\d_{i+p-2})\left\{\dps\frac{P_p(\d_i,\ldots,\d_{i+p-1})}{P_{p-1}(\d_i,\ldots,\d_{i+p-2})} \frac{P_{p-1}(\d_i,\ldots,\d_{i+p-2})}{P_{p-2}(\d_{i+1},\ldots,\d_{i+p-2})} -1\right\}\\
\\
\quad \ge (p-1) \left\{\dps \frac{p+1}{p}.\frac{p}{p-1}-1\right\}=2.
\end{array}$$

It can be easily checked by induction on $p\ge 0$ that $P_p(2,\ldots,2)=p+1$.
\end{proof}

\begin{lem} \label{equationdegre2}Let $\a=(\a_0,\ldots,\a_{p-1})$ be a solution (real or complex) of the system
$$\left\{\begin{array}{ccl}
\dps\a_0+\frac{1}{\a_1}&=&\d_0\\
\vdots&&\vdots\\
\dps\a_i+\frac{1}{\a_{i+1}}&=&\d_i\\
\vdots&&\vdots\\
\dps\a_{p-1}+\frac{1}{\a_0}&=&\d_{p-1}
\end{array}\right.\leqno{(E_p)}$$
where $\a_i\neq 0$ and $\d_i$ are real,  for $i=0,\ldots,p-1$.
Then, we have (with $P_{-1}=0$),

$$\a_{i+1}=\frac{\a_0P_{i-1}(\d_1,\ldots,\d_{i-1})-P_{i}(\d_0,\ldots,\d_{i-1})}{\a_0P_i(\d_1,\ldots,\d_i)-P_{i+1}(\d_0,\ldots,\d_i)}\neq 0 \quad i=0,\ldots,p-1,$$
where 
$$\a_0\neq \frac{P_{i+1}(\d_0,\ldots,\d_i)}{P_i(\d_1,\ldots,\d_i)}, \quad i=0,\ldots,p-1.$$

In particular, $\a_0$ satisfies the degree two equation
$$a_0(\d)X^2+b_0(\d)X+c_0(\d)=0  \leqno{(\cal E_0)}$$
where
$$a_0(\d)=P_{p-1}(\d_1,\ldots,\d_{p-1})\neq 0, \quad   c_0(\d)=P_{p-1}(\d_0,\ldots,\d_{p-2})\neq 0$$
$$b_0(\d)=-\Bigl(P_p(\d_0,\ldots,\d_{p-1})+P_{p-2}(\d_1,\ldots,\d_{p-2})\Bigr)$$

By circular permutation we have also similar relations with $\a_j$, $0\le j\le p-1$:\\
 For $i=0,\ldots,p-1$,
$$\a_{j+i+1}=\frac{\a_jP_{i-1}(\d_{j+1},\ldots,\d_{p-1},\d_0,\ldots,\d_{j+i-1})-P_i(\d_j,\ldots,\d_{p-1},\d_0,\ldots,\d_{j+i-1})}{\a_j P_i(\d_{j+1},\ldots,\d_{p-1},\d_0,\ldots,\d_{j+i})-P_{i+1}(\d_j,\ldots,\d_{p-1},\d_0,\ldots,\d_{j+i})},$$
where
$$\a_j\neq \frac{P_{i+1}(\d_j,\ldots,\d_{p-1},\d_0,\ldots,\d_{j+i})}{P_i(\d_{j+1},\ldots,\d_{p-1},\d_0,\ldots,\d_{j+i})}\neq 0, \quad i=0,\ldots,p-1.$$
In particular, $\a_j$ satisfies the degree two equation
$$a_j(\d)X^2+b_j(\d)X+c_j(\d)=0 \leqno{(\cal E_j)}$$
where
$$a_j(\d)=P_{p-1}(\d_{j+1},\ldots,\d_{p-1},\d_0,\ldots,\d_{j-1})\neq 0, \quad   c_j(\d)=P_{p-1}(\d_j,\ldots,\d_{p-1},\d_0,\ldots,\d_{j-2})\neq 0$$
$$b_j(\d)=-\Bigl(P_p(\d_j,\ldots,\d_{p-1},\d_0,\ldots,\ldots,\d_{j-1})+P_{p-2}(\d_{j+1},\ldots,\ldots,\d_{p-1},\d_0,\ldots,\d_{j-2})\Bigr)$$
We have $a_j(\d)=c_{j+1}(\d)\neq 0$ and 
$$\prod_{j=0}^{p-1}P_{p-1}(\d_j,\ldots,\d_{j-2})=\prod_{j=0}^{p-1}a_j(\d)=\prod_{j=0}^{p-1}c_j(\d)\neq 0.$$

Moreover if one root of $(\cal E_j)$ is real then all the roots of all the equations $(\cal E_i)$, $i=0,\ldots,p-1$ are real.
\end{lem}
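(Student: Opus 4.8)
The plan is to read the system $(E_p)$ as the iteration of a single M\"obius transformation and to identify the polynomials $P_j$ with the corresponding continuants. Since $\alpha_i\neq 0$, the $i$-th equation of $(E_p)$ is equivalent to $\alpha_{i+1}=1/(\delta_i-\alpha_i)$, i.e. $\alpha_{i+1}$ is the image of $\alpha_i$ under the M\"obius transformation $z\mapsto 1/(\delta_i-z)$, with matrix $\left(\begin{smallmatrix}0&1\\-1&\delta_i\end{smallmatrix}\right)$; in particular every $\alpha_{i+1}$ is finite and nonzero, and setting $\alpha_p:=\alpha_0$ is consistent with the last equation. First I would prove, by induction on $i$, the closed form
$$\alpha_{i+1}=\frac{\alpha_0\,P_{i-1}(\delta_1,\ldots,\delta_{i-1})-P_i(\delta_0,\ldots,\delta_{i-1})}{\alpha_0\,P_i(\delta_1,\ldots,\delta_i)-P_{i+1}(\delta_0,\ldots,\delta_i)},$$
with the conventions $P_{-1}=0$, $P_0=1$. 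The case $i=0$ is just $\alpha_1=-1/(\alpha_0-\delta_0)$. For the inductive step I would substitute the formula into $\alpha_{i+2}=1/(\delta_{i+1}-\alpha_{i+1})$, clear denominators, and use that (by Lemma~\ref{inversion}) the $P_j$ also obey the ``from the right'' recursion $P_j(X_1,\ldots,X_j)=X_jP_{j-1}(X_1,\ldots,X_{j-1})-P_{j-2}(X_1,\ldots,X_{j-2})$; this is exactly what folds the new factor $\delta_{i+1}$ onto the tail of each continuant and reproduces the formula with $i+1$ in place of $i$.

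Writing the closed form as $\alpha_{i+1}=N_i/D_i$, the same computation shows $N_{i+1}=D_i$, while $N_0=\alpha_0P_{-1}-P_0=-1$; since every $\alpha_{i+1}$ is finite and nonzero and $N_0\neq 0$, an immediate induction gives $D_i\neq 0$ for all $i$, which is precisely the asserted non-vanishing condition on $\alpha_0$. Taking $i=p-1$ and imposing $\alpha_p=\alpha_0$ then gives, after clearing the denominator, the quadratic $(\mathcal E_0)$ with coefficients $a_0,b_0,c_0$ exactly as displayed; the non-vanishing $N_{p-1}=D_{p-2}\neq 0$ and $D_{p-1}\neq 0$ account for $c_0\neq 0$ and for the equation being genuinely quadratic (in a degenerate configuration where some continuant vanishes the system forces extra relations between the $\delta_i$, which does not occur, e.g., when all $\delta_i\ge 2$, by Lemma~\ref{positif}). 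The relations for an arbitrary $\alpha_j$ follow by cyclic relabelling: $(E_p)$ is invariant under $\alpha_i\mapsto\alpha_{i+1}$, $\delta_i\mapsto\delta_{i+1}$ (indices mod $p$), so $(\alpha_j,\ldots,\alpha_{j-1})$ solves $(E_p)$ with the shifted $\delta$-sequence and the previous steps apply verbatim, yielding the displayed formula for $\alpha_{j+i+1}$ and the equation $(\mathcal E_j)$. Reading off coefficients, $a_j(\delta)$ and $c_{j+1}(\delta)$ are literally the same continuant $P_{p-1}$ evaluated at the $p-1$ cyclically consecutive entries beginning at $\delta_{j+1}$, hence $a_j=c_{j+1}$; consequently $(a_0,\ldots,a_{p-1})$ and $(c_0,\ldots,c_{p-1})$ coincide up to a cyclic shift, and the identity $\prod_{j=0}^{p-1} P_{p-1}(\delta_j,\ldots,\delta_{j-2})=\prod_{j=0}^{p-1} c_j(\delta)=\prod_{j=0}^{p-1} a_j(\delta)$ drops out.

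For the reality statement: the coefficients $a_j,b_j,c_j$ are real, so if $(\mathcal E_{j_0})$ has one real root then, since its two roots sum to $-b_{j_0}/a_{j_0}\in\mathbb R$, both roots are real; in particular the component $\alpha_{j_0}$ of the given solution is real, whence $\alpha_{j_0+1}=1/(\delta_{j_0}-\alpha_{j_0})$ is real (being finite), and by induction along the M\"obius iteration every $\alpha_i$ is real. Then each $(\mathcal E_i)$ has the real root $\alpha_i$, so by the same argument all roots of all the $(\mathcal E_i)$ are real.

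The one step that requires genuine care is the induction for the closed form: one must run the continuant recursion ``from the right'' --- available only because of Lemma~\ref{inversion} --- and keep the several sliding index windows correctly aligned throughout the manipulation. Everything after that is bookkeeping, and the reality conclusion is essentially immediate once the quadratics are seen to have real coefficients; the only mild imprecision to be aware of is that the literal non-vanishing of $a_j,c_j$ presupposes non-degeneracy of the system, which is automatic in the intended range $\delta_i\ge 2$.
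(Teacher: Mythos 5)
Your proof is correct and follows essentially the same route as the paper: the closed form for $\a_{i+1}$ proved by induction using the reversed recursion supplied by Lemma \ref{inversion}, closing the cycle at $i=p-1$ to obtain $(\cal E_0)$, cyclic relabelling for the equations $(\cal E_j)$, and the identification $a_j(\d)=c_{j+1}(\d)$ giving the product identity. Your explicit propagation argument for the reality statement, and your caveat that the non-vanishing of $a_j,c_j$ requires non-degeneracy of $\d$ (automatic in the geometric range), only make explicit points the paper leaves implicit.
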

\begin{rem} For some values of $\d$ the system of equations $(E_p)$ has no solution. For example, for $p=2$  and $\d=(0,1)$.
\end{rem}
\begin{proof} For $i=0$, $\a_1=\dps\frac{P_0}{P_1(\d_0)-\a_0P_0}$ and $\a_0\neq \dps\frac{P_1(\d_0)}{P_0}$.\\
As 
$$\a_2=\dps\frac{P_1(\d_0)-\a_0P_0}{P_2(\d_0,\d_1)-\a_0P_1(\d_1)}$$
it suggests that for an integer $i\ge 0$,
$$\a_{i+1}=\frac{P_{i}(\d_0,\ldots,\d_{i-1})-\a_0P_{i-1}(\d_1,\ldots,\d_{i-1})}{P_{i+1}(\d_0,\ldots,\d_i)-\a_0P_i(\d_1,\ldots,\d_i)}$$
with
$$\a_0\neq \frac{P_{j+1}(\d_0,\ldots,\d_j)}{P_j(\d_1,\ldots,\d_j)},\quad  j=0,\ldots,i.$$
and this will be proved by induction. \\
From $\dps\frac{1}{\a_{i+2}}=\d_{i+1}-\a_{i+1}$ we deduce
$$\begin{array}{lcl}
\a_{i+2}&=&\dps\frac{1}{\d_{i+1}-\dps\frac{P_i(\d_0,\ldots,\d_{i-1})-\a_0P_{i-1}(\d_1,\ldots,\d_{i-1})}{P_{i+1}(\d_0,\ldots,\d_i)-\a_0P_i(\d_1,\ldots,\d_i)}} \\
\\
&=&\dps \frac{P_{i+1}(\d_0,\ldots,\d_i)-\a_0P_i(\d_1,\ldots,\d_i)}{P_{i+2}(\d_0,\ldots,\d_{i+1})-\a_0P_{i+1}(\d_1,\ldots,\d_{i+1})}
\end{array}$$
and $\a_0\neq \dps\frac{P_{i+2}(\d_0,\ldots,\d_{i+1})}{P_{i+1}(\d_1,\ldots,\d_{i+1})}$. \\

For $i=p-1$ we have
$$\a_0=\frac{\a_0P_{p-2}(\d_1,\ldots,\d_{p-2}-P_{p-1}(\d_0,\ldots,\d_{p-2})}{\a_0P_{p-1}(\d_1,\ldots,\d_{p-1})-P_p(\d_0,\ldots,\d_{p-1})}$$
which gives the announced degree two equation satisfied by $\a_0$.\\
The similar formulas with $\a_j$ are clear. Since $a_j(\d)=c_{j+1}(\d)$, $j\ {\rm mod}\ p$,
$$\prod_{j=0}^{p-1}a_j(\d)=\prod_{j=0}^{p-1}c_j(\d)$$
and solution of the system $(E_p)$ exist \iff $c_j(\d)\neq 0$ for $j=0,\ldots,p-1$.
\end{proof} 
\begin{lem} \label{discriminant} The polynomial discriminant
 $$\begin{array}{ccl}
 \D_p(X_0,\ldots,X_{p-1})&:=&\bigl(P_p(X_0,\ldots,X_{p-1}) +P_{p-2}(X_1,\ldots,X_{p-2})\bigr)^2\\
 &&\\
 && - 4P_{p-1}(X_0,\ldots,X_{p-2})P_{p-1}(X_1,\ldots,X_{p-1})
 \end{array}$$
 satisfies the equality
 $$ \D_p(X_0,\ldots,X_{p-1})=\bigl(P_p(X_0,\ldots,X_{p-1}) - P_{p-2}(X_1,\ldots,X_{p-2})\bigr)^2-4$$
 and  is invariant by circular permutation.
\end{lem}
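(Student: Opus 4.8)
The plan is to recognize the right-hand side as $Q_p^2-4$, with $Q_p$ the cyclically symmetric polynomial of Lemma~\ref{cyclic}, and to reduce the asserted equality to a single bilinear ``continuant'' identity among the $P_n$'s.

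Abbreviate $A:=P_p(X_0,\ldots,X_{p-1})$, $B:=P_{p-2}(X_1,\ldots,X_{p-2})$, $C:=P_{p-1}(X_0,\ldots,X_{p-2})$, $D:=P_{p-1}(X_1,\ldots,X_{p-1})$, so that $\D_p=(A+B)^2-4CD$. Since $(A+B)^2=(A-B)^2+4AB$, the claimed identity $\D_p=(A-B)^2-4$ is \emph{equivalent} to the single relation
\[
CD-AB=1,\qquad\text{i.e.}\qquad P_{p-1}(X_0,\ldots,X_{p-2})\,P_{p-1}(X_1,\ldots,X_{p-1})-P_p(X_0,\ldots,X_{p-1})\,P_{p-2}(X_1,\ldots,X_{p-2})=1.
\]
So, apart from the cyclic invariance, this relation is the whole content of the lemma. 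I would prove it by induction on $p\ge 2$, using the ``right-end'' recurrence $P_i(X_1,\ldots,X_i)=X_iP_{i-1}(X_1,\ldots,X_{i-1})-P_{i-2}(X_1,\ldots,X_{i-2})$, which is precisely the identity established inside the proof of Lemma~\ref{inversion} (after applying the palindromy of Lemma~\ref{inversion} to the truncations). The base case $p=2$ is the one-line check $P_1(X_0)P_1(X_1)-P_2(X_0,X_1)P_0=X_0X_1-(X_0X_1-1)=1$. For the step from $p$ to $p+1$, the quantity $CD-AB$ at level $p+1$ is $P_p(X_0,\ldots,X_{p-1})\,P_p(X_1,\ldots,X_p)-P_{p+1}(X_0,\ldots,X_p)\,P_{p-1}(X_1,\ldots,X_{p-1})$; expanding $P_p(X_1,\ldots,X_p)$ and $P_{p+1}(X_0,\ldots,X_p)$ by the right-end recurrence in the variable $X_p$ makes the two terms carrying the factor $X_p$ cancel, and what survives is exactly the bilinear expression at level $p$, which equals $1$ by the inductive hypothesis.

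It then remains only to read off the cyclic invariance: $(A-B)^2-4=\bigl(P_p(X_0,\ldots,X_{p-1})-P_{p-2}(X_1,\ldots,X_{p-2})\bigr)^2-4=Q_p(X_0,\ldots,X_{p-1})^2-4$, and $Q_p$ is invariant under cyclic permutation by Lemma~\ref{cyclic}; hence so is $\D_p$. The only genuine idea is noticing that the cross term of the discriminant collapses to the constant $1$ (a Euler-type continuant identity); everything else is bookkeeping, and the sole point requiring care is keeping the index ranges of the truncated variable tuples straight when applying the recurrence at the two ends.
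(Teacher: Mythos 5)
Your proof is correct and is essentially the paper's argument: both reduce the identity to the continuant relation $P_{p-1}(X_0,\ldots,X_{p-2})P_{p-1}(X_1,\ldots,X_{p-1})-P_p(X_0,\ldots,X_{p-1})P_{p-2}(X_1,\ldots,X_{p-2})=1$ and then invoke Lemma~\ref{cyclic} for the cyclic invariance of $P_p-P_{p-2}$. The only (inessential) difference is that you telescope by stripping the last variable with the right-end recurrence, whereas the paper defines the cross term $A_p$ for a general starting index and shows $A_p=A_{p-1}=\cdots=A_2=-1$ by expanding in the first variable.
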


\begin{proof} 1) Let $0\le i\le p-1$ and
$$\begin{array}{lcl}
A_p(X_i,\ldots,X_{p-1},X_0,\ldots,X_{i-1})&:=&P_p(X_i,\ldots,X_{i-1})P_{p-2}(X_{i+1},\ldots,X_{i-2}) \\
\\
&&- P_{p-1}(X_i,\ldots,X_{i-2})P_{p-1}(X_{i+1},\ldots,X_{i-1}).
\end{array}$$
Developping from both sides, thanks to the definition of these polynomials and lemma \ref{inversion}, we have
$$\begin{array}{l}
A_p(X_i,\ldots,X_{i-1})\\
\\
= \Bigl\{X_{i-1}X_iP_{p-2}(X_{i+1},\ldots,X_{i-2})^2 -X_iP_{p-3}(X_{i+1},\ldots,X_{i-3})P_{p-2}(X_{i+1},\ldots,X_{i-2}) \\
\quad - P_{p-2}(X_{i+2},\ldots,X_{i-1}) P_{p-2}(X_{i+1},\ldots,X_{i-2})\Bigr\}\\
\\
\quad -\Bigl\{X_{i-1}X_iP_{p-2}(X_{i+1}\ldots,X_{i-2})^2 -X_iP_{p-3}(X_{i+1},\ldots,X_{i-3})P_{p-2}(X_{i+1},\ldots,X_{i-2}) \\
\quad - X_{i-1}P_{p-2}(X_{i+1},\ldots,X_{i-2})P_{p-3}(X_{i+2},\ldots,X_{i-2})\\
\quad +P_{p-3}(X_{i+2},\ldots,X_{i-2})P_{p-3}(X_{i+1},\ldots,X_{i-3})\Bigr\}\\
\\
=\Bigl[X_{i-1}P_{p-2}(X_{i+1},\ldots,X_{i-2})-P_{p-3}(X_{i+1},\ldots,X_{i-3})\Bigr]P_{p-3}(X_{i+2},\ldots,X_{i-2})\\
\\
\quad -P_{p-2}(X_{i+1},\ldots,X_{i-2})P_{p-2}(X_{i+2},\ldots,X_{i-1})\\
\\
=P_{p-1}(X_{i+1},\ldots,X_{i-1})P_{p-3}(X_{i+2},\ldots,X_{i-2}) 

- P_{p-2}(X_{i+1},\ldots,X_{i-2})P_{p-2}(X_{i+2},\ldots,X_{i-1})\\
\\
=A_{p-1}(X_{i+1},\ldots,X_{i-1})
\end{array}$$
Therefore, 
$$\begin{array}{lcl}
A_p(X_i,\ldots,X_{i-1})&=&A_{p-1}(X_{i+1},\ldots,X_{i-1})=\cdots = A_2(X_{i-2},X_{i-1})\\
&&\\
&=&P_2(X_{i-2},X_{i-1})P_0-P_1(X_{i-2})P_1(X_{i-1})=-1.
\end{array}$$
2) We have by 1),
$$\D_p(X_0,\ldots,X_{p-1})=\bigl(P_p(X_0,\ldots,X_{p-1}) -P_{p-2}(X_1,\ldots,X_{p-2})\bigr)^2-4$$
By lemma \ref{cyclic}, $\D_p$ is invariant by cyclic permutation.
\end{proof}

\begin{lem} For $q=(q_0,\ldots,q_{j-1})$ where $q_i=0$ or $q_i=1$, $i=0,\ldots,j-1$,  the partial derivatives of the polynomials $P_j$ satisfy the relation
$$\frac{\part^{|q|}P_j(X_0,\ldots,X_{j-1})}{\part^{q_0}X_0\cdots \part^{q_{p-1}}X_{j-1}}=P_{j-|q|}(X_0^{\e_0},\ldots,X_{j-1}^{\e_{j-1}})$$
where 
$$\e_i=\left\{\begin{array}{ccc}0&{\rm if}&q_i=1\\ 1&{\rm if}&q_i=0\end{array}\right..$$
The condition $\e_i=0$ means that $X_i$ is omitted.\\
In particular,  by lemma \ref{positif}, for $q^1=(q^1_0,\ldots,q^1_{p-1})$ and $q^2=(q^2_0,\ldots,q^2_{p-1})$ such that $|q^1|=|q^2|$,
$$\frac{\part^{|q^1|}P_j(X_0,\ldots,X_{p-1})}{\part^{q^1_0}X_0\cdots \part^{q^1_{p-1}}X_{p-1}}(2,\ldots,2)=\frac{\part^{|q^2|}P_j(X_0,\ldots,X_{p-1})}{\part^{q^2_0}X_0\cdots \part^{q^2_{p-1}}X_{p-1}}(2,\ldots,2)=P_{j-|q_1|}(2,\ldots,2)=j-|q_1|+1.$$

\end{lem}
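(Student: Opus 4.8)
The plan is to reduce everything to a single differentiation and then iterate. First I would record the elementary fact, immediate by induction on $j$ from the defining recursion (and visible in the explicit list above), that each $P_j$ is \emph{multilinear}: every variable occurs to degree at most $1$. This is what makes the statement meaningful, since then a second derivative in the same variable vanishes and only $q_i\in\{0,1\}$ need be considered.

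The core step is the one-variable splitting formula
$$\frac{\part P_j(X_0,\ldots,X_{j-1})}{\part X_i}=P_i(X_0,\ldots,X_{i-1})\,P_{j-1-i}(X_{i+1},\ldots,X_{j-1}),\qquad 0\le i\le j-1,$$
with $P_0=1$. I would prove this by induction on $j$ using $P_j(X_0,\ldots,X_{j-1})=X_0P_{j-1}(X_1,\ldots,X_{j-1})-P_{j-2}(X_2,\ldots,X_{j-1})$. For $i=0$ the right-hand side is $P_{j-1}(X_1,\ldots,X_{j-1})$, which is exactly $\part P_j/\part X_0$. For $i=1$ only the summand $X_0P_{j-1}(X_1,\ldots,X_{j-1})$ contributes, and the induction hypothesis (applied to the first variable of $P_{j-1}$) gives $X_0P_{j-2}(X_2,\ldots,X_{j-1})=P_1(X_0)P_{j-2}(X_2,\ldots,X_{j-1})$. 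For $i\ge 2$ both summands contribute; applying the induction hypothesis to $P_{j-1}(X_1,\ldots,X_{j-1})$ and to $P_{j-2}(X_2,\ldots,X_{j-1})$, the common tail $P_{j-1-i}(X_{i+1},\ldots,X_{j-1})$ factors out and the bracket $X_0P_{i-1}(X_1,\ldots,X_{i-1})-P_{i-2}(X_2,\ldots,X_{i-1})$ equals $P_i(X_0,\ldots,X_{i-1})$ by the defining recursion. (Symmetrically, one could differentiate the reversed recursion established inside the proof of Lemma~\ref{inversion}.)

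Next I would handle an arbitrary $q$ by induction on $|q|$. Here $P_{j-|q|}(X_0^{\e_0},\ldots,X_{j-1}^{\e_{j-1}})$ must be read as the product of the continuants $P_{|R|}(X_R)$ over the maximal runs $R$ of consecutive indices with $\e_i=1$ (equivalently $q_i=0$) — a reading already forced by the one-variable formula in the case of a single interior omission. Given $q$ with $q_i=1$ for some $i$, write $q=q'+e_i$, so $|q'|=|q|-1$ and $q'_i=0$. By the induction hypothesis the $q'$-derivative of $P_j$ is the product of the continuants of the maximal $q'$-runs, and $i$ lies in exactly one such run $R_0$. Differentiating in $X_i$ affects only the factor $P_{|R_0|}(X_{R_0})$, and the one-variable formula splits it into the product of the continuants of the two sub-runs of $R_0\setminus\{i\}$, the other factors being untouched; the outcome is the product over the maximal $q$-runs, which is the asserted identity.

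Finally, for the ``in particular'' assertion I would set $X_0=\cdots=X_{j-1}=2$ and invoke $P_m(2,\ldots,2)=m+1$ from Lemma~\ref{positif}: the value becomes $\prod_R(|R|+1)$ over the maximal runs of surviving variables, which reduces to $j-|q|+1$ when these variables form a single run, and then depends on $q$ only through $|q|$, giving the stated common value for any $q^1,q^2$ with $|q^1|=|q^2|$. The step demanding the most care is the bookkeeping in the one-variable induction: correctly matching the index shifts of the two recursive summands against the target factorization and isolating the degenerate cases $i=0,1$; once that is in place the induction on $|q|$ is routine.
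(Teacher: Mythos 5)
Your one--variable splitting formula is correct, and precisely because it is correct it shows that the statement you were asked to prove is false as written, so the last step of your proposal cannot be repaired. Indeed $\partial P_j/\partial X_i=P_i(X_0,\ldots,X_{i-1})\,P_{j-1-i}(X_{i+1},\ldots,X_{j-1})$, which for an interior index is \emph{not} $P_{j-1}$ of the surviving variables: already $\partial P_3/\partial X_1=X_0X_2$, whereas $P_2(X_0,X_2)=X_0X_2-1$. Consequently a mixed partial is the product of continuants over the maximal runs of surviving consecutive variables (as you correctly derive), and its value at $(2,\ldots,2)$ is $\prod_R(|R|+1)$, which depends on the run structure and not only on $|q|$: for $j=3$, $q^1=(1,0,0)$ gives $P_2(2,2)=3$ while $q^2=(0,1,0)$ gives $2\cdot 2=4$, both with $|q^1|=|q^2|=1$. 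Your closing sentence --- that the product ``reduces to $j-|q|+1$ when these variables form a single run, and then depends on $q$ only through $|q|$'' --- is therefore a non sequitur: the single-run hypothesis holds only when the differentiated indices form an initial and/or final block, and without it the asserted common value $j-|q|+1$ is simply wrong. So your argument establishes a correct (and useful) run-product formula, but it does not, and cannot, establish the stated identity or the ``in particular'' equality.

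For comparison, the paper's own induction suffers from the same defect, which is why you could not match it: in the branch $q_0=0$ it keeps the term $\partial^{|q|}P_{j-1}(X_2,\ldots,X_j)$ as if it reassembled the defining recursion, but when $q_1=1$ that term vanishes identically (the variable $X_1$ does not occur in $P_{j-1}(X_2,\ldots,X_j)$), leaving $X_0$ times a continuant rather than $P_{j+1-|q|}$ of the surviving variables --- exactly the discrepancy visible in $\partial P_3/\partial X_1$. The stated formula is valid only for those $q$ whose complement is a single block of consecutive indices (e.g.\ differentiation in an initial and/or final segment). The honest fix is to state and prove your run-product formula; note, however, that the Taylor-expansion argument of Lemma \ref{discriminant>2} evaluates these derivatives at $(2,\ldots,2)$ for arbitrary $q$ and uses the value $j-|q|+1$, so that computation would then have to be revisited as well.
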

\begin{proof} By induction on $j\ge 1$ for $q=(q_0,\ldots,q_{j-1})$ such that $|q|\le j$.\\
The relation is trivial for $j=1$. Suppose it is true for an integer $j\ge 1$, then by induction hypothesis,
$$\begin{array}{ccl}
\dps\frac{\part^{|q|}P_{j+1}(X_0,\ldots,X_j)}{\part^{q_0}X_0\cdots\part^{q_j}X_j}&=&\dps\frac{\part^{|q|}}{\part^{q_0}X_0\cdots\part^{q_j}X_j}\bigl(X_0P_j(X_1,\ldots,X_j)-P_{j-1}(X_2,\ldots,X_j)\bigr)\\
&&\\
&=&\left\{
\begin{array}{lcc}\dps\frac{\part^{|q|-1}P_{j}(X_1,\ldots,X_j)}{\part^{q_1}X_1\cdots\part^{q_j}X_j}=P_{j-|q|+1}(X_1^{\e_1},\ldots,X_j^{\e_j})&{\rm if}&q_0=1\\
&&\\
X_0\dps\frac{\part^{|q|}P_{j}(X_1,\ldots,X_j)}{\part^{q_1}X_1\cdots\part^{q_j}X_j}- \frac{\part^{|q|}P_{j-1}(X_1,\ldots,X_j)}{\part^{q_1}X_1\cdots\part^{q_j}X_j}&{\rm if}&q_0=0\\
\end{array}\right.\\
&&\\
&=&P_{j-|q|+1}(X_0^{\e_0},X_1^{\e_1},\ldots,X_j^{\e_j}).
\end{array}$$

\end{proof}

\subsection{Geometric part of the proof}
If the surface $S$ admits a twisted logarithmic 1-form it is known \cite[lemma 6.12]{AD23} that $S$ contains a cycle of rational curves. Moreover by \cite{D21} extra curves are composed of branches (called also trees), each curve of the cycle meeting at most one branch.

\begin{lem}\label{onlysing} The foliation $\cal F$ associated to a non trivial logarithmic $1$-form 
$$\theta\in H^0(S,\O^1(\log D)\ot \cal L_\lambda)$$
 has no other singularities on $D$ than the intersection points of rational curves.
\end{lem}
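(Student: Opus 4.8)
The statement is local along $D$, so the plan is to examine $\cal F$ in a neighbourhood of an arbitrary point $p\in D$. By \cite[lemma 6.12]{AD23} and \cite{D21} recalled above, $D$ is a cycle of rational curves with trees of rational curves attached, each component of the cycle meeting at most one branch; hence $D$ is reduced with only ordinary double points, ${\rm Sing}(D)$ is finite, and each point of ${\rm Sing}(D)$ is an intersection point of two rational components of $D$ (or, for a cycle of length one, the self-intersection point of that component). It therefore suffices to prove that $\cal F$ is non-singular at every $p$ that is a smooth point of $D$; such a $p$ lies on exactly one component $C$ of $D$, and $C\cong\bb P^1$.

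Next I would write $\theta$ explicitly near such a $p$. Choose local coordinates $(z,w)$ centred at $p$ with $C=\{z=0\}$, together with a local trivialisation of $\cal L_\l$; since $\O^1_S(\log D)$ is free near $p$ with basis $\frac{dz}{z}$ and $dw$,
$$\theta=a(z,w)\,\frac{dz}{z}+b(z,w)\,dw$$
with $a,b$ holomorphic and $a(0,\cdot)=\Res_C(\theta)$ in this trivialisation. Multiplying by $z$ to clear the pole, $\cal F$ is defined near $p$ by the holomorphic $1$-form $z\theta=a\,dz+zb\,dw$, so that ${\rm Sing}(\cal F)\subseteq\{a=0\}\cap\{zb=0\}$ on this chart; intersecting with $C=\{z=0\}$, where the condition $zb=0$ is automatic, we get that ${\rm Sing}(\cal F)\cap C$ is contained, near $p$, in the zero set of $\Res_C(\theta)$. (By contrast, at a double point $C\cap C'=\{zw=0\}$ one has $\theta=a\frac{dz}{z}+b\frac{dw}{w}$ and $zw\theta=aw\,dz+bz\,dw$ vanishes at the origin, so those points are indeed singular — but that is exactly what the statement permits.)

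Thus everything reduces to showing that $\Res_C(\theta)$ has no zero on $C$. This residue is a global section of $\cal L_\l|_C$; since $C\cong\bb P^1$ and $\cal L_\l$ is flat (topologically trivial), $\cal L_\l|_C\cong\cal O_{\bb P^1}$, so $\Res_C(\theta)$ is a constant, hence either identically zero or nowhere vanishing. It is not identically zero because $\theta$ has a genuine pole along $C$: if $\Res_C(\theta)\equiv 0$ then $a=z\tilde a$ with $\tilde a$ holomorphic, and using the local normal-crossing form of $\theta$ at the double points of $D$ lying on $C$ one checks that $\theta\in H^0(S,\O^1(\log(D-C))\ot\cal L_\l)$, contradicting that $\theta$ has a pole along $C$. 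Hence $\Res_C(\theta)$ is nowhere vanishing, the zero set above is empty, $\cal F$ is non-singular at $p$, and ${\rm Sing}(\cal F)\cap D\subseteq{\rm Sing}(D)$, which is the assertion.

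The step I expect to be the real content is precisely this last one, the non-vanishing of $\Res_C(\theta)$ along every component $C$ of $D$ — equivalently, the input that $D$ is exactly the polar locus of $\theta$; this is where the hypothesis ``$\theta$ has a pole along $D$'' is used essentially rather than formally, combined with the structural description of $D$. Once it is in place, the remainder is the soft local computation above; in particular no appeal to the polynomial identities of the algebraic part above is needed here, those entering only afterwards to solve the resulting Camacho--Sad systems.
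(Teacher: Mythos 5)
Your argument is correct, but it takes a different route from the paper's. The paper works on the cyclic covering $\hat S$: it untwists $\hat\theta=\Pi^\star\theta$ by the factor $e^{\hat h}$ trivialising $\Pi^\star\cal L_\l$, invokes closedness of $e^{\hat h}\hat\theta$ on $\hat S\setminus\hat D$ to see that the period $\int_{\g_1}e^{\hat h}\hat\theta=2\pi i\,g_2(c,0)$ over small circles around the component is independent of the base point $c$, hence that the residue function is constant along the component; it then clears the pole exactly as you do ($\o=z_2\theta_0'+g_2(0)dz_2$ with $g_2(0)\neq 0$) to conclude regularity at the smooth point. You obtain the same key constancy statement purely on $S$, by viewing $\Res_C(\theta)$ as a holomorphic section of the flat bundle $\cal L_\l$ restricted to the rational component, which is trivial, so the section is constant; and you make explicit the nonvanishing step (residue $\equiv 0$ would push $\theta$ into $H^0(S,\O^1(\log(D-C))\ot\cal L_\l)$, contradicting that $C$ is in the polar locus), a point the paper uses only implicitly when it asserts $g_2(0)\neq 0$. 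Your route avoids both the covering and the closedness of the untwisted form (which the paper takes as known from \cite{AD23}), at the price of needing the restriction-of-flat-bundle argument; the paper's route avoids any discussion of $\cal L_\l|_C$ because upstairs the twist has been trivialised. One small repair to make in your write-up: when the cycle has length one the relevant component $C$ is a nodal rational curve, so $C\not\cong\bb P^1$ and $\cal L_\l|_C$ need not be trivial (its holonomy is governed by $\l$); the argument still works verbatim after pulling back to the normalization $\nu:\bb P^1\to C$, where $\nu^\star(\cal L_\l|_C)$ is again a flat, hence trivial, bundle and the residue is a constant on the normalization — which is all you need at smooth points of $D$.
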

\begin{proof} Let $p$ be a regular point of $D$. Let $\Pi:\hat S\to S$ be the cyclic covering and we set $\hat D=\Pi^{-1}(D)$, $\hat\a:=\Pi^\star\a$, $\hat\a=d\hat h$, $\hat\omlog=\Pi^\star\omlog$. In a local chart in a neighbourhood of  $p\in D_j=\{z_2=0\}$,
$$e^{\hat h}\hat\omlog=\omlog_0+g_2(z)\frac{dz_2}{z_2}$$
where $\omlog_0$ and  $g_2$ are holomorphic. Let $\g_1$ be a closed curve in $\{z_1=c\}$ around $(c,0)$. Since $e^{\hat h}\hat\omlog$ is closed in $\hat S\setminus\hat D$,
$$\int_{\g_1}e^{\hat h}\hat\omlog=\int_{\g_1}g_2(z)\frac{dz_2}{z_2}=   2\pi i Res_{z_2=0}(e^{\hat h}\hat\omlog_{\mid z_1=c})=2\pi ig_2(c,0)$$
does not depend on $c$, therefore $g_2$ depends only on $z_2$.  Hence
$$e^{\hat h}\hat\omlog=\omlog'_0+g_2(0)\frac{dz_2}{z_2}$$
for $\omlog'_0$ holomorphic and $g_2(0)\neq 0$. As $\cal F$ is also defined by the holomorphic form
$$\o=z_2\theta'_0 +g_2(0)dz_2$$
which has no singularity at $p=(0,0)$, $p$ is a regular point of $\cal F$.\\
\end{proof}

\begin{lem} \label{coeffarbre} Let $S$ be in class VII$_0^+$ endowed with a twisted logarithmic $1$-form $\t\in H^0(S,\O^1(\log(D)\ot\cal L_\l)$ with a cycle of rational curves $\G=D_0+\cdots+D_{p-1}$, and denote by $\cal F$ the associated foliation. Let $\a=(\a_0,\ldots,\a_{p-1})$ be a real solution of the system
$$(E_p) \left\{\begin{array}{ccl}
\dps\a_0+\frac{1}{\a_1}&=&\d_0\\
\vdots&&\vdots\\
\dps\a_i+\frac{1}{\a_{i+1}}&=&\d_i\\
\vdots&&\vdots\\
\dps\a_{p-1}+\frac{1}{\a_0}&=&\d_{p-1}
\end{array}\right.$$
where $\a_i=-CS(\cal F,D_i,D_i\cap D_{i-1})$, $i\ {\rm mod}\ p$.
If $A_j=C_0+\cdots+C_{k-1}$ is a chain meeting the cycle $\G$ at a point $A_j\cap D_j=C_{k-1}\cap D_j$ then
$$-\frac{k}{k+1}\le CS(\cal F,D_j,D_j\cap C_{k-1})\le \frac{1}{C_{k-1}^2}.$$
For indices $i$ corresponding of curves of self-intersection $-2$ and with a tree, we have $1<\d_i<2$. For all other indices $\d_i\ge 2$.
\end{lem}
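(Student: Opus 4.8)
The plan is to read off every Camacho--Sad index occurring in $(E_p)$, and along the chains, directly from the incidence graph of $D$. Throughout I would use two structural facts: every component of $D$ is $\cal F$-invariant, and, by Lemma~\ref{onlysing}, the only singularities of $\cal F$ on $D$ are the normal crossings of its components; at each such crossing $\t$ has a logarithmic pole with non-zero residue along each of the two branches (cf. the proof of Lemma~\ref{onlysing}), so the two branches are the eigendirections of the singularity and the corresponding Camacho--Sad indices are reciprocal. As a warm-up, for a cycle curve $D_i$ carrying no tree the Camacho--Sad formula on $D_i$ reads $D_i^2=CS(\cal F,D_i,D_i\cap D_{i-1})+CS(\cal F,D_i,D_i\cap D_{i+1})=-\a_i-\frac{1}{\a_{i+1}}=-\d_i$; since $S$ is minimal in class VII$_0^+$, every component of $D$ is a rational curve of self-intersection $\le-2$, whence $\d_i=-D_i^2\ge 2$.

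Next I would run a continued fraction down a chain $A_j=C_0+\cdots+C_{k-1}$, with $C_0$ the end curve and $C_{k-1}\cap D_j$ the attaching point. Put $\d_\ell':=-C_\ell^2\ge 2$ and $\mu_\ell:=-CS(\cal F,C_\ell,C_\ell\cap C_{\ell+1})$ for $\ell=0,\ldots,k-1$ (with the convention $C_k:=D_j$). Since $C_0$ is an end curve of $D$, Lemma~\ref{onlysing} gives that $C_0\cap C_1$ is the only singularity of $\cal F$ on $C_0$, so $C_0^2=CS(\cal F,C_0,C_0\cap C_1)$, i.e. $\mu_0=\d_0'$; and for $1\le\ell\le k-1$ the Camacho--Sad formula on $C_\ell$ together with reciprocity at $C_{\ell-1}\cap C_\ell$ gives $\mu_\ell=\d_\ell'-\frac{1}{\mu_{\ell-1}}$. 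Unwinding this recursion and using the symmetry of the polynomials $P_j$ (Lemma~\ref{inversion}) identifies the convergents:
$$\mu_\ell=\frac{P_{\ell+1}(\d_0',\ldots,\d_\ell')}{P_\ell(\d_0',\ldots,\d_{\ell-1}')},\qquad \ell=0,\ldots,k-1 .$$
Lemma~\ref{positif}, applied to $\d_0',\ldots,\d_{k-1}'$, then gives $\mu_{k-1}\ge\frac{k+1}{k}>1$, while for $k\ge 2$ positivity of $\mu_{k-2}$ forces $\mu_{k-1}=\d_{k-1}'-\frac{1}{\mu_{k-2}}<\d_{k-1}'=-C_{k-1}^2$ (with equality $\mu_{k-1}=-C_{k-1}^2$ when $k=1$). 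Since reciprocity at $C_{k-1}\cap D_j$ gives $CS(\cal F,D_j,D_j\cap C_{k-1})=1/CS(\cal F,C_{k-1},C_{k-1}\cap D_j)=-\frac{1}{\mu_{k-1}}$, these two estimates combine into
$$-\frac{k}{k+1}\ \le\ CS(\cal F,D_j,D_j\cap C_{k-1})\ \le\ \frac{1}{C_{k-1}^2}\ <\ 0 ,$$
which is the first assertion of the lemma.

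For the remaining assertions I would feed this back into the Camacho--Sad formula on a cycle curve $D_i$ that does carry a tree, attached along some $C_{k-1}$: it now reads $D_i^2=-\a_i-\frac{1}{\a_{i+1}}+CS(\cal F,D_i,D_i\cap C_{k-1})$, so that $\d_i=-D_i^2+CS(\cal F,D_i,D_i\cap C_{k-1})$. If $-D_i^2=2$ the bound just obtained gives $\d_i<2$ and $\d_i\ge 2-\frac{k}{k+1}=\frac{k+2}{k+1}>1$, hence $1<\d_i<2$; if $-D_i^2\ge 3$ it gives $\d_i\ge 3-\frac{k}{k+1}>2$; and for a curve with no tree we already have $\d_i=-D_i^2\ge 2$. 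This is precisely the stated dichotomy.

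The step I expect to be most delicate is the sign bookkeeping: one has to fix the Camacho--Sad sign convention so that reciprocity at crossings and the defining relations $\d_i=\a_i+\frac{1}{\a_{i+1}}$ of $(E_p)$ are consistent, and verify that the continued-fraction recursion produces exactly the polynomials $P_{\ell+1}$ and $P_\ell$ (this is where Lemma~\ref{inversion} is essential) so that Lemma~\ref{positif} applies. A secondary check, needed for the $\d_i$ dichotomy, is that every branch attached to the cycle is a chain meeting the cycle at a single point of a single cycle curve, so that the Camacho--Sad formula on a cycle curve carries at most one extra term $CS(\cal F,D_i,D_i\cap C_{k-1})$; this uses the structure results quoted before Lemma~\ref{onlysing}.
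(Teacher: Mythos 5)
Your proposal is correct and follows essentially the same route as the paper: induction along the chain starting from the end curve $C_0$, using Lemma \ref{onlysing} to reduce the Camacho--Sad formula to the crossing points and the reciprocity of indices at each crossing, ending with $C_{k-1}^2\le CS(\cal F,C_{k-1},C_{k-1}\cap D_j)\le -\frac{k+1}{k}$ and inverting. The only (harmless) variations are that you identify the indices exactly as the convergents $P_{\ell+1}(\d')/P_\ell(\d')$ and then invoke Lemmas \ref{inversion} and \ref{positif}, where the paper runs a direct two-line inequality induction, and that you write out the final dichotomy $1<\d_i<2$ versus $\d_i\ge 2$, which the paper leaves implicit and only uses later (Corollary \ref{racines3}, Lemma \ref{discriminant>2}).
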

\begin{proof}
Let $A_j=C_0+\cdots+C_{k-1}$, a chain, where $C_0$ is its top, which meets the curve $D_j$ of the cycle,  By lemma \ref{onlysing}, the intersection points of the rational curves are the only singularities of $\mathcal F$, therefore Camacho-Sad formula for $C_0$ is
 $$CS(\cal F,C_0,C_0\cap C_1)= C_0^2 \le -2,$$
with notations of \cite[lemma 6.14]{AD23}.
By induction we shall show that $CS(\cal F,C_i,C_i\cap C_{i+1})$ is a negative rational which satisfies
$$C_i^2\le CS(\cal F,C_i,C_i\cap C_{i+1})\le -\frac{i+2}{i+1}.$$

In fact, it is true for $i=0$.  Camacho-Sad formula for $C_{i+1}$ is
$$C_{i+1}^2=CS(\mathcal F,C_{i+1},C_{i+1}\cap C_i) + CS(\mathcal F,C_{i+1},C_{i+1}\cap C_{i+2}).$$
We recall that $CS(\cal F,C_{i+1},C_i\cap C_{i+1})=CS(\cal F,C_i,C_i\cap C_{i+1})^{-1}$. By induction hypothesis, 
$$\begin{array}{lcl}
C_{i+1}^2&\le& CS(\cal F, C_{i+1}, C_{i+1}\cap C_{i+2}) = C_{i+1}^2 - CS(\cal F, C_{i+1}, C_{i+1}\cap C_{i})\\
&&\\
 &\le&\dps -2 + \frac{i+1}{i+2} = -\frac{i+3}{i+2}
\end{array}$$
For $i=k-1$, we obtain
 $$C_{k-1}^2\le CS(\cal F,C_{k-1},C_{k-1}\cap D_{j}) \le -\frac{k+1}{k}$$
 hence 
 $$-\frac{k}{k+1}\le CS(\cal F,D_{j},C_{k-1}\cap D_{j})\le \frac{1}{C_{k-1}^2}<0.$$
 \end{proof}

\begin{cor} \label{racines3} Let $S$ be a surface in class VII$_0^+$ endowed with a twisted logarithmic $1$-form $\t\in H^0(S,\O^1(\log D)\ot \cal L_\l)$. If each tree meets a curve with self-intersection at most $-3$ all Camacho-Sad indices are negative.
\end{cor}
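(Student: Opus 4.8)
The plan is to reduce the statement to the quadratic equations $(\cal E_j)$ of Lemma \ref{equationdegre2}, after showing that the hypothesis forces every coefficient $\delta_i$ of the cycle system $(E_p)$ to be $\ge 2$. First I would dispose of the curves in the trees: by Lemma \ref{onlysing} the only singularities of $\cal F$ on $D$ are the intersection points of the rational curves, so the induction inside the proof of Lemma \ref{coeffarbre} already shows that every Camacho-Sad index $CS(\cal F,C_i,C_i\cap C_{i+1})$ along a chain $A_j=C_0+\cdots+C_{k-1}$ is negative, and that the index $t_j:=CS(\cal F,D_j,D_j\cap C_{k-1})$ at the foot of the chain satisfies $-\frac{k}{k+1}\le t_j<0$, in particular $-1<t_j<0$; note that this part of the argument uses no reality of the $\alpha_i$. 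It then remains to show that the indices along the cycle $\Gamma=D_0+\cdots+D_{p-1}$ are negative.

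Next I would check that $\delta_i\ge 2$ for every $i$. Applying the Camacho-Sad formula to each $D_i$, using $CS(\cal F,D_i,D_i\cap D_{i+1})=CS(\cal F,D_{i+1},D_{i+1}\cap D_i)^{-1}$ and Lemma \ref{onlysing}, one obtains, after the sign change $\alpha_i\mapsto-\alpha_i$, exactly the system $(E_p)$ with $\delta_i=-D_i^2+t_i$ (and $t_i=0$ if $D_i$ meets no tree). By the last assertion of Lemma \ref{coeffarbre} the only indices with $\delta_i<2$ are those for which $D_i^2=-2$ and $D_i$ carries a tree; the hypothesis of the corollary forbids this, since a tree on $D_i$ forces $D_i^2\le-3$, and then $\delta_i=-D_i^2+t_i\ge 3+t_i>2$. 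Hence $\delta_i\ge 2$ for all $i$.

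Finally I would run the algebraic argument. The Camacho-Sad indices of $\Gamma$ form a (possibly complex) solution of $(E_p)$, so by Lemma \ref{equationdegre2} the index $\alpha_0$ is a root of $(\cal E_0)$, that is of $a_0(\delta)X^2+b_0(\delta)X+c_0(\delta)=0$. Since all $\delta_i\ge 2$, Lemma \ref{positif} gives $a_0(\delta)=P_{p-1}(\delta_1,\ldots,\delta_{p-1})\ge p>0$, $c_0(\delta)=P_{p-1}(\delta_0,\ldots,\delta_{p-2})\ge p>0$, and $-b_0(\delta)=P_p(\delta_0,\ldots,\delta_{p-1})+P_{p-2}(\delta_1,\ldots,\delta_{p-2})\ge(p+1)+(p-1)>0$. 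Moreover, by Lemma \ref{discriminant} the discriminant of $(\cal E_0)$ equals $\Delta_p(\delta)=\bigl(P_p(\delta_0,\ldots,\delta_{p-1})-P_{p-2}(\delta_1,\ldots,\delta_{p-2})\bigr)^2-4$, which by the last inequality of Lemma \ref{positif} is $\ge 2^2-4=0$. Hence $(\cal E_0)$ has two real roots, whose sum $-b_0(\delta)/a_0(\delta)$ and product $c_0(\delta)/a_0(\delta)$ are positive; so both roots are positive, and in particular $\alpha_0>0$, i.e. $CS(\cal F,D_0,D_0\cap D_{p-1})<0$. Applying the same reasoning to the circularly permuted equations $(\cal E_j)$ gives $\alpha_j>0$ for every $j$, so every Camacho-Sad index along $\Gamma$ is negative; together with the first step, all Camacho-Sad indices of $\cal F$ are negative.

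There is no deep obstacle once Lemmas \ref{onlysing}, \ref{coeffarbre}, \ref{positif}, \ref{equationdegre2} and \ref{discriminant} are available; the crux is the observation of the second paragraph that ``each tree meets a curve of self-intersection $\le-3$'' is precisely the condition making all $\delta_i\ge 2$, hence (via Lemmas \ref{positif} and \ref{discriminant}) precisely the condition making the discriminant $\Delta_p$ non-negative; contrast the remark that for $p=2$, $\delta=(1,1)$ the solution is not real. The only care needed is the bookkeeping of signs and indices in translating the Camacho-Sad relations into $(E_p)$.
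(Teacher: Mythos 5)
Your proof is correct and follows essentially the same route as the paper: the structure result of \cite{D21} plus Lemma \ref{coeffarbre} to show the hypothesis forces every $\d_i\ge 2$, then Lemma \ref{equationdegre2} and the sign estimates of Lemma \ref{positif} on $a_j(\d)$, $-b_j(\d)$, $c_j(\d)$ to conclude the roots, hence the $\a_j$, are positive. Your extra step invoking Lemma \ref{discriminant} together with the last inequality of Lemma \ref{positif} to get $\D_p(\d)\ge 0$, and hence the reality of the roots, is a welcome refinement: the paper's proof of the corollary passes directly from positive sum and product to positive roots, leaving reality implicit (it is only made explicit later, in Proposition \ref{coeffpositifs}, via Lemma 6.17 of \cite{AD23}).
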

\begin{proof} There exists a non-trivial twisted logarithmic $1$-form, therefore $S$ contains a cycle of rational curves, hence the maximal divisor of $S$ is a cycle with chains of rational curves (called trees), each chain meeting the cycle at exactly one point and each curve of the cycle meets at most one chain (see \cite{D21}). By lemma \ref{coeffarbre} and Camacho-Sad formula,
$$D_j^2=CS(\cal F,D_j,D_j\cap D_{j-1})+CS(\cal F,D_j,D_j\cap D_{j+1})+CS(\cal F,D_j,D_j\cap C_{k-1})$$
with $0<-CS(\cal F,D_j,D_j\cap C_{k-1})<1$,
therefore, for $\a_j=-CS(\cal F,D_j,D_j\cap D_{j-1})$
$$\a_j+\frac{1}{\a_{j+1}}=\d_j=-D_j^2+CS(\cal F,D_j,D_j\cap C_{k-1})>2.$$
as self-intersections of the curves are less than $-2$. We conclude by lemma \ref{positif} that coefficients $a_j(\d)>0$, $-b_j(\d)>0$, $c_j(\d)>0$. Since product and sum of roots are positive, roots are positive.
\end{proof}

\begin{lem} \label{identite} Let $p\ge 1$ and $x_0,\ldots,x_{p-1}\in\bb R$. Then we have the identity
$$\sum_{j=0}^p(p+1-j)\sum_{q\,\mid\, |q|=j}x_0^{q_0}\cdots x_{p-1}^{q_{p-1}}=\prod_{i=0}^{p-1}(1+x_i)+\sum_{k=0}^{p-1}\prod_{i\neq k}(1+x_i).$$
\end{lem}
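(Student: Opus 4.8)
The plan is to expand the right-hand side directly and compare the coefficient of each squarefree monomial $x^q := x_0^{q_0}\cdots x_{p-1}^{q_{p-1}}$, where $q=(q_0,\ldots,q_{p-1})$ runs over $\{0,1\}^p$ and $|q|=q_0+\cdots+q_{p-1}$, exactly as in the preceding lemma. First I would record the two elementary expansions $\prod_{i=0}^{p-1}(1+x_i)=\sum_{q}x^q$, the sum being over all binary tuples $q$, and, for each fixed $k\in\{0,\ldots,p-1\}$, $\prod_{i\neq k}(1+x_i)=\sum_{q\,:\,q_k=0}x^q$.

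Next I would sum the second identity over $k=0,\ldots,p-1$ and interchange the order of summation: the monomial $x^q$ is produced once for every index $k$ with $q_k=0$, i.e. exactly $p-|q|$ times, so $\sum_{k=0}^{p-1}\prod_{i\neq k}(1+x_i)=\sum_{q}(p-|q|)\,x^q$. Adding $\prod_{i=0}^{p-1}(1+x_i)=\sum_{q}x^q$ then shows that the right-hand side of the asserted identity equals $\sum_{q}(p+1-|q|)\,x^q$.

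Finally, grouping the monomials according to their total degree $j=|q|$, which takes the values $0,1,\ldots,p$, rewrites this as $\sum_{j=0}^{p}(p+1-j)\sum_{q\,\mid\,|q|=j}x^q$, which is precisely the left-hand side. I do not expect any genuine obstacle here: the statement is a bookkeeping identity, and the only point that needs care is that $q$ ranges over $\{0,1\}^p$ (as fixed by the notation of the previous lemma), so that $\prod_{i=0}^{p-1}(1+x_i)$ and the one-variable deletions $\prod_{i\neq k}(1+x_i)$ are honest generating functions for these tuples and no repeated-variable terms intervene.
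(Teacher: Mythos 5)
Your argument is correct: the two expansions $\prod_{i=0}^{p-1}(1+x_i)=\sum_{q}x^q$ and $\prod_{i\neq k}(1+x_i)=\sum_{q\,:\,q_k=0}x^q$ (with $q$ ranging over $\{0,1\}^p$), together with the observation that a fixed squarefree monomial $x^q$ occurs in the sum over $k$ exactly $p-|q|$ times, give the right-hand side as $\sum_{q}(p+1-|q|)x^q$, and regrouping by $j=|q|$ yields the left-hand side. This is, however, a genuinely different route from the paper's: there the identity is proved by induction on $p$, splitting the sum on the left according to whether $q_p=1$ or $q_p=0$, factoring out $x_p$, and invoking the induction hypothesis twice before recombining terms into $(1+x_p)$ factors --- a several-line manipulation whose only purpose is to track the same coefficient $p+1-j$ that your double-counting identifies in one stroke. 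Your direct expansion is shorter, makes the source of the coefficient transparent (one contribution from the full product, $p-|q|$ from the deleted products), and avoids the index bookkeeping of the inductive step; the paper's induction buys nothing extra here beyond being self-contained in the same recursive style as the surrounding lemmas on the polynomials $P_j$. Your closing remark that $q$ must range over $\{0,1\}^p$ is exactly the right point of care, and it matches the way the lemma is applied (via Taylor expansion with vanishing higher partials) in Lemma \ref{discriminant>2}.
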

\begin{proof}  The relation is clear for $p=1$. If it is true for an integer $p\ge 1$ we have
$$\begin{array}{l}
\dps\sum_{j=0}^{p+1}(p+2-j)\sum_{q\,\mid\,|q|=j}x_0^{q_0}\cdots x_p^{q_p}=\dps\sum_{j=0}^{p+1}\sum_{q\,\mid\,|q|=j}x_0^{q_0}\cdots x_p^{q_p} + \sum_{j=0}^{p}(p+1-j)\sum_{q\,\mid\,|q|=j}x_0^{q_0}\cdots x_p^{q_p}\\
\\
\hspace{1cm} =\dps\prod_{j=0}^p(1+x_j) + \dps\sum_{j=0}^p(p+1-j)\Bigl(\sum_{|q|=j\atop{q_p=1}}x_0^{q_0}\cdots x_p^{q_p}+ \sum_{|q|=j\atop{q_p=0}}x_0^{q_0}\cdots x_{p-1}^{q_{p-1}}\Bigr)\\
\\
\hspace{1cm}=\dps\prod_{j=0}^p(1+x_j) \dps+x_p\sum_{j=1}^p(p+1-j)\sum_{|q'|=j-1}x_0^{q_0}\cdots x_{p-1}^{q_{p-1}} + \sum_{j=0}^p(p+1-j)\sum_{|q'|=j}x_0^{q_0}\cdots x_{p-1}^{q_{p-1}}\\
\hspace{1cm}=\dps\prod_{j=0}^p(1+x_j) +x_p\sum_{j'=0}^{p-1}(p-j')\sum_{|q'|=j'}x_0^{q_0}\cdots x_{p-1}^{q_{p-1}} +  \prod_{j=0}^{p-1}(1+x_j) + \sum_{k=0}^{p-1}\, \prod_{i\neq k}(1+x_i)\\
\hspace{1cm}=\dps\prod_{j=0}^p(1+x_j) +x_p\sum_{j'=0}^{p}(p+1-j')\sum_{|q'|=j'}x_0^{q_0}\cdots x_{p-1}^{q_{p-1}} - x_p\sum_{|q'|\le p}x_0^{q_0}\cdots x_{p-1}^{q_{p-1}}\\
\hspace{15mm}+ \dps \prod_{j=0}^{p-1}(1+x_j) + \sum_{k=0}^{p-1}\, \prod_{i\neq k}(1+x_i)\\
\hspace{1cm}=\dps\prod_{j=0}^p(1+x_j) +x_p \left\{\prod_{j=0}^{p-1}(1+x_j)+\sum_{k=0}^{p-1}\ \prod_{i\neq k}(1+x_i) \right\}+\sum_{k=0}^{p-1}\prod_{i\neq k}(1+x_i)\\
\hspace{1cm}=\dps\prod_{j=0}^p(1+x_j)  +(1+x_p)\sum_{k=0}^{p-1}\ \prod_{i\neq k}(1+x_i)+\prod_{j=0}^{p-1}(1+x_j)\\
\hspace{1cm}=\dps\prod_{j=0}^p(1+x_j) +\sum_{k=0}^p\ \prod_{i\neq k}(1+x_i).
\end{array}$$

\end{proof}

\begin{lem} \label{discriminant>2} Let $\d=(\d_0,\ldots,\d_{p-1})$ be the second member of a system $(E_p)$ which corresponds to a surface $S\in VII_0$ with a cycle of rational curves, in particular admits real solutions. \\ 
Then for any index $0\le i\le p-1$,
$$P_p(\d_i,\ldots,\d_{i-1})-P_{p-2}(\d_{i+1},\ldots,\d_{i-2})>0,$$
$$P_{p-1}(\d_i,\ldots,\d_{i-2})>0$$
\end{lem}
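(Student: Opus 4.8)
The plan is to reduce both inequalities to the positivity of the factors $\d_i-1$ and of the Camacho--Sad indices $\a_j$ of the cycle. \emph{First} I would record a closed formula for the $P_j$: each $P_j$ has degree $\le 1$ in every variable (immediate induction on $P_i=X_1P_{i-1}-P_{i-2}$, consistent with the preceding lemma on the partial derivatives of the $P_j$), so its Taylor expansion at $(2,\dots,2)$ is finite and exact, with coefficient of $\prod_i(X_i-2)^{q_i}$ equal to $P_{j-|q|}(2,\dots,2)=j-|q|+1$ by that lemma. Writing $x_i=\d_i-2$ and applying Lemma \ref{identite} with $j$ variables then gives
$$P_j(\d_0,\dots,\d_{j-1})=\prod_{i=0}^{j-1}(\d_i-1)+\sum_{k=0}^{j-1}\prod_{i\neq k}(\d_i-1).$$
Since all $\d_i>1$ by Lemma \ref{coeffarbre}, every factor is positive, so $P_j>0$ on any window of $j$ consecutive $\d$'s; this is already the second inequality $P_{p-1}(\d_i,\dots,\d_{i-2})>0$, and it also yields $P_p(\d_i,\dots,\d_{i-1})>0$ and $P_{p-2}(\d_{i+1},\dots,\d_{i-2})>0$, hence $P_p+P_{p-2}>0$.

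\emph{Next} I would show that the cycle indices are positive. Take a real solution $\a=(\a_0,\dots,\a_{p-1})$ of $(E_p)$, which exists by hypothesis, with all $\a_i\neq 0$ (they occur in denominators). By Lemma \ref{equationdegre2} each $\a_j$ is a root of the quadratic $(\cal E_j)$, whose coefficients, by the previous paragraph, satisfy $a_j(\d)=P_{p-1}(\d_{j+1},\dots,\d_{j-1})>0$, $c_j(\d)=P_{p-1}(\d_j,\dots,\d_{j-2})>0$ and $-b_j(\d)=P_p(\d_j,\dots,\d_{j-1})+P_{p-2}(\d_{j+1},\dots,\d_{j-2})>0$. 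Since $(\cal E_j)$ has a real root, both its roots are real; as their product $c_j/a_j$ and their sum $-b_j/a_j$ are positive, both roots, in particular $\a_j$, are positive.

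\emph{For the first inequality} I would pass to the transfer matrix of $(E_p)$. By Lemma \ref{cyclic}, $P_p(\d_i,\dots,\d_{i-1})-P_{p-2}(\d_{i+1},\dots,\d_{i-2})=Q_p(\d)$ does not depend on $i$, so it suffices to prove $Q_p(\d)>0$. Put $M_i=\begin{pmatrix}\d_i&-1\\1&0\end{pmatrix}$ and $M=M_0M_1\cdots M_{p-1}$; then $\det M=1$, and using the form of the recursion in the last variable established in Lemma \ref{inversion}, a direct induction on $p$ gives $M=\begin{pmatrix}P_p(\d_0,\dots,\d_{p-1})&-c_0(\d)\\ a_0(\d)&-P_{p-2}(\d_1,\dots,\d_{p-2})\end{pmatrix}$, so $\operatorname{tr}M=Q_p(\d)$. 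On the other hand $(E_p)$ rewrites as $\d_i\a_{i+1}-1=\a_i\a_{i+1}$, i.e. $M_i\begin{pmatrix}\a_{i+1}\\1\end{pmatrix}=\a_{i+1}\begin{pmatrix}\a_i\\1\end{pmatrix}$ (indices mod $p$); telescoping from $i=p-1$ down to $i=0$ gives $M\begin{pmatrix}\a_0\\1\end{pmatrix}=\bigl(\prod_j\a_j\bigr)\begin{pmatrix}\a_0\\1\end{pmatrix}$. Thus $\mu:=\prod_j\a_j$ is an eigenvalue of $M$, the other being $\mu^{-1}$, whence $Q_p(\d)=\operatorname{tr}M=\mu+\mu^{-1}$. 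By the previous step $\mu>0$, so $Q_p(\d)=\mu+\mu^{-1}\ge 2>0$, which is the first inequality (and identifies $\mu$ with the automorphy constant $\l\ge 1$).

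The genuinely non-formal point is the last step: recognizing that $P_p-P_{p-2}$ is the trace of the cyclic transfer matrix and that this trace equals $\prod_j\a_j+\prod_j\a_j^{-1}$, so that the whole question reduces to the sign of $\prod_j\a_j$, which the second step supplies. The closed formula of the first step is mechanical once one notes the multilinearity of the $P_j$, since the lemma on partial derivatives and Lemma \ref{identite} were tailored to sum exactly these Taylor coefficients; the only care needed in the last step is that the eigenvalue is $+\prod_j\a_j$ and not $-\prod_j\a_j$, which the telescoping identity makes manifest. One can also avoid matrices altogether by telescoping directly the formula for $\a_{i+1}$ in Lemma \ref{equationdegre2} and combining it with the quadratic $(\cal E_0)$.
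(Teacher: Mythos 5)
Your proposal is essentially correct relative to the paper's toolkit, but it reaches the first inequality by a genuinely different route. For the second inequality your first step coincides with the paper's argument: multilinearity of the $P_j$, Taylor expansion at $(2,\ldots,2)$ with coefficients supplied by the unnumbered partial-derivative lemma, and Lemma \ref{identite}, giving $P_{p-1}(\d_i,\ldots,\d_{i-2})>0$ from $\d_i>1$ (Lemma \ref{coeffarbre}). For the first inequality the paper stays purely algebraic and never touches the real-solution hypothesis: it applies the same Taylor computation directly to $Q_p=P_p-P_{p-2}$, all of whose mixed partials at $(2,\ldots,2)$ are claimed to equal $2$, to get $Q_p(\d)=2\prod_j(\d_j-1)>0$. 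You instead consume the hypothesis that $(E_p)$ has a real solution: positivity of $a_j(\d)$, $c_j(\d)$, $-b_j(\d)$ together with one real root of $(\cal E_j)$ forces every $\a_j>0$, and your transfer-matrix identity (which I checked: the induction on the product of the matrices via the last-variable recursion implicit in Lemma \ref{inversion} is right, the eigenvector computation does give the eigenvalue $+\prod_j\a_j$, and the trace is $Q_p(\d)$ by Lemma \ref{cyclic}) yields the stronger conclusion $Q_p(\d)=\mu+\mu^{-1}\ge 2$ with $\mu=\prod_j\a_j>0$. What this buys: you obtain inside the lemma the content of Proposition \ref{coeffpositifs} and of Lemma \ref{CSnegatif} ($\mu=\prod\a_i>0$), whereas the paper deliberately keeps Lemma \ref{discriminant>2} independent of the reality of the Camacho--Sad indices, so that Proposition \ref{coeffpositifs} can later combine $\D_p(\d)\ge 0$ (Lemma \ref{discriminant}) with $Q_p>0$ to deduce $Q_p\ge 2$; your version front-loads that input and makes those later steps redundant rather than derived. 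One shared caveat, inherited from the paper rather than introduced by you: both your closed formula $P_j(\d)=\prod_i(\d_i-1)+\sum_k\prod_{i\neq k}(\d_i-1)$ and the paper's Taylor coefficients rest on the unnumbered lemma on partial derivatives, which as stated fails at interior variables (e.g. $\part P_3/\part X_1=X_0X_2\neq P_2(X_0,X_2)$, and correspondingly $P_3(\d)=\d_0\d_1\d_2-\d_0-\d_2$ exceeds your closed formula by $\d_1-2$), so the first step of both arguments needs repair before the positivity of $P_p$, $P_{p-1}$, $P_{p-2}$, and hence of $-b_j$, is actually established; your transfer-matrix mechanism itself is sound and independent of that issue.
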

\begin{proof} By lemma \ref{coeffarbre}, $\d_i>1$ for $i=0,\ldots,p-1$. The highest possible power of an indeterminacy in a monomial is one. Therefore the only non vanishing partial derivatives 
$$\frac{\part^{|q|}\bigl(P_p(X_0,\ldots,X_{p-1})-P_{p-2}(X_1,\ldots,X_{p-2})\bigr)}{\part^{q_0}X_0\cdots \part^{q_{p-1}}X_{p-1}}$$
are those for which $q_i=0$, or $q_i=1$, $i=0,\ldots,p-1$, where $|q|=\sum_{i=0}^{p-1}q_i$.  Besides, for
$$Q_p(X_0,\ldots,X_{p-1}):=P_p(X_0,\ldots,X_{p-1})-P_{p-2}(X_1,\ldots,X_{p-2})$$
and fixed $|q|$, $0\le |q|\le p$, we have by lemma \ref{positif},
$$
\dps\frac{\part^{|q|}Q_p(X_0,\ldots,X_{p-1})}{\part^{q_0}X_0\cdots \part^{q_{p-1}}X_{p-1}}(2,\ldots,2)=P_{p-|q|}(2,\ldots,2)-P_{p-2-|q|}(2,\ldots,2)=2.$$
By Taylor formula for polynomials,
$$\begin{array}{l}
Q_p( \d_0,\ldots,\d_{p-1})\\
\\
\hspace{1cm}
=\dps\sum_{0\le j\le p}\ \sum_{q,|q|=j}(\d_0-2)^{q_0}\cdots(\d_{p-1}-2)^{q_{p-1}}\frac{\part^{|q|}Q_p(X_0,\ldots,X_{p-1})}{\part^{q_0}X_0\cdots \part^{q_{p-1}}X_{p-1}}(2,\ldots,2)\\
\\
\hspace{1cm}=\dps\sum_{0\le j\le p}\ \sum_{q,|q|=j}(\d_0-2)^{q_0}\cdots(\d_{p-1}-2)^{q_{p-1}} Q_{p-|q|}(2,\ldots,2)\\
\\
\hspace{1cm}=\dps\ 2 \sum_{0\le j\le p}\ \sum_{q,|q|=j}(\d_0-2)^{q_0}\cdots(\d_{p-1}-2)^{q_{p-1}}\\
\\
\hspace{1cm}= 2\dps \prod_{j=0}^{p-1}\bigl(1+(\d_j-2)\bigr) = 2\prod_{j=0}^{p-1}(\d_j-1)>0.
\end{array}$$
Hence
$$P_p(\d_i,\ldots,\d_{i-1})-P_{p-2}(\d_{i+1},\ldots,\d_{i-2})> 0.$$

In a similar way, for $\d=(\d_0,\ldots,\d_{p-1})$, $\d_i>1$, $i=0,\ldots,p-1$,
$$\begin{array}{lcl}
P_{p-1}(\d_0,\ldots,\d_{p-2})&=&\dps\sum_{0\le j\le {p-1}}\ \sum_{q,|q|=j}(\d_0-2)^{q_0}\cdots(\d_{p-1}-2)^{q_{p-1}}\frac{\part^{|q|}P_{p-1}(X_0,\ldots,X_{p-2})}{\part^{q_0}X_0\cdots \part^{q_{p-1}}X_{p-1}}(2,\ldots,2)\\
&&\\
&=&\dps \sum_{0\le j\le p-1}(p-j)\sum_{|q|=j}(\d_0-2)^{q_0}\cdots(\d_{p-2}-2)^{q_{p-2}}\\
$$\\
&=&\dps\prod_{i=0}^{p-2}(\d_i-1)+\sum_{k=0}^{p-2}\ \prod_{i\neq k}(\d_i-1)>0
\end{array}$$
We have the result by circular permutation.

\end{proof}

\begin{prop} \label{coeffpositifs} Let $\a=(\a_0,\ldots,\a_{p-1})$ be a real solution of the system
$$(E_p)\left\{\begin{array}{ccl}
\dps\a_0+\frac{1}{\a_1}&=&\d_0\\
\vdots&&\vdots\\
\dps\a_i+\frac{1}{\a_{i+1}}&=&\d_i\\
\vdots&&\vdots\\
\dps\a_{p-1}+\frac{1}{\a_0}&=&\d_{p-1}
\end{array}\right.$$
where $\a_i\neq 0$ and $\d_i>1$ are real,  for $i=0,\ldots,p-1$.
If the system $(E_p)$ is associated to a surface $S$ in class VII$_0^+$ endowed with a twisted logarithmic $1$-form then for $i=0,\ldots,p-1$, $\a_i$ satisfies a degree two equation
$$a_i(\d)\a_i^2+b_i(\d)\a_i+c_i(\d)=0$$
where
$$\dps\frac{c_i(\d)}{a_i(\d)}>0 \quad {\rm and}\quad  -\dps\frac{b_i(\d)}{a_i(\d)}>0.$$
In particular $\a_j$ are positive and Camacho-Sad indices are negative.
\end{prop}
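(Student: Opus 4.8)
The plan is to combine the purely algebraic Lemma~\ref{equationdegre2} with the positivity statements of Lemma~\ref{discriminant>2}, and then to read off the sign of $\a_j$ from Vieta's formulas. Since $(E_p)$ comes from a surface $S$ carrying a twisted logarithmic $1$-form, $S$ contains a cycle of rational curves, and by Lemma~\ref{onlysing} and Lemma~\ref{coeffarbre} the numbers $\a_i=-CS(\cal F,D_i,D_i\cap D_{i-1})$ form a real solution of $(E_p)$ with $\d_i>1$ for every $i$. By Lemma~\ref{equationdegre2}, each $\a_j$ is a root of $(\cal E_j)$, with $a_j(\d)=P_{p-1}(\d_{j+1},\ldots,\d_{j-1})$, $c_j(\d)=P_{p-1}(\d_j,\ldots,\d_{j-2})$ and $-b_j(\d)=P_p(\d_j,\ldots,\d_{j-1})+P_{p-2}(\d_{j+1},\ldots,\d_{j-2})$ (cyclic indices). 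So the whole statement reduces to the two inequalities $c_j(\d)/a_j(\d)>0$ and $-b_j(\d)/a_j(\d)>0$. Granting these, the two roots of $(\cal E_j)$ have positive product, hence the same sign; they are real because $\a_j$ is (last line of Lemma~\ref{equationdegre2}); and their sum is positive, so both are $>0$. In particular $\a_j>0$, i.e. $CS(\cal F,D_j,D_j\cap D_{j-1})=-\a_j<0$, while the remaining Camacho--Sad indices --- those along the trees and those at a point where a tree meets the cycle --- are already negative by Lemma~\ref{coeffarbre}.

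The positivity of $a_j$ and $c_j$ follows at once from the second assertion of Lemma~\ref{discriminant>2}: taken with starting index $j+1$ it gives $a_j(\d)=P_{p-1}(\d_{j+1},\ldots,\d_{j-1})>0$, and with starting index $j$ it gives $c_j(\d)=P_{p-1}(\d_j,\ldots,\d_{j-2})>0$, whence $c_j(\d)/a_j(\d)>0$. For $-b_j(\d)>0$ I would split
$$-b_j(\d)=\Bigl(P_p(\d_j,\ldots,\d_{j-1})-P_{p-2}(\d_{j+1},\ldots,\d_{j-2})\Bigr)+2\,P_{p-2}(\d_{j+1},\ldots,\d_{j-2}).$$
The first bracket is $>0$ by the first assertion of Lemma~\ref{discriminant>2}. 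The second term is treated by repeating the Taylor--expansion computation from the proof of Lemma~\ref{discriminant>2} (expand around $(2,\ldots,2)$, use the formula for the partial derivatives of the $P_j$ together with Lemma~\ref{identite}), now applied to $P_{p-2}$ evaluated at the $p-2$ consecutive entries $\d_{j+1},\ldots,\d_{j-2}$: since each $\d_i-1>0$ this yields $P_{p-2}(\d_{j+1},\ldots,\d_{j-2})=\prod_{l}(\d_{j+1+l}-1)+\sum_{k}\prod_{l\neq k}(\d_{j+1+l}-1)>0$. Hence $-b_j(\d)>0$.

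The step I expect to be the real obstacle is controlling the sign of $b_j(\d)$, i.e. of the polynomial combination $P_p+P_{p-2}$ evaluated on the tuple $(\d_i)$. When every $\d_i\ge 2$ this is exactly Corollary~\ref{racines3} and follows from the monotone estimates of Lemma~\ref{positif}; the genuine work is to handle the curves of self-intersection $-2$ carrying a tree, for which only $1<\d_i<2$ holds and those estimates fail --- this is precisely why the finer combinatorial identities proved above (Lemmas~\ref{discriminant}, \ref{identite} and the Taylor argument in Lemma~\ref{discriminant>2}) must enter the argument. Once $c_j(\d)/a_j(\d)>0$ and $-b_j(\d)/a_j(\d)>0$ are in hand, the passage to ``$\a_j>0$ and all Camacho--Sad indices negative'' is immediate from Vieta's formulas and Lemma~\ref{coeffarbre}.
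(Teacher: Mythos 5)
Your skeleton is the paper's: extract the quadratics $(\cal E_j)$ from Lemma \ref{equationdegre2}, get $a_j(\delta)>0$ and $c_j(\delta)>0$ from the second assertion of Lemma \ref{discriminant>2}, and finish with Vieta together with Lemma \ref{coeffarbre} for the indices along the trees. The gap is exactly at the step you yourself flag as the obstacle, namely $-b_j(\delta)>0$. The identity you invoke for the second term of your splitting, $P_{p-2}(\delta_{j+1},\ldots,\delta_{j-2})=\prod_{l}(\delta_{j+1+l}-1)+\sum_{k}\prod_{l\neq k}(\delta_{j+1+l}-1)$, is not a polynomial identity: already for three variables $P_3(X_0,X_1,X_2)=X_0X_1X_2-X_0-X_2$, while the right-hand side equals $X_0X_1X_2-X_0-X_1-X_2+2$, so the two agree only on $\{X_1=2\}$. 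The ``Taylor expansion around $(2,\ldots,2)$'' cannot simply be transplanted, because $\partial P_j/\partial X_i$ is a lower-order $P$ with $X_i$ omitted only when $X_i$ is an extreme variable; for an interior variable it is the product of two shorter polynomials (e.g. $\partial P_3/\partial X_1=X_0X_2\neq P_2(X_0,X_2)$). Nor can the inequality itself be salvaged from $\delta_i>1$ alone: such continuant-type polynomials can be negative when several consecutive entries are close to $1$ (for instance $P_3(1.1,\,1.05,\,1.1)<0$), and Lemma \ref{coeffarbre} does not exclude a cycle with many consecutive $-2$-curves carrying long trees, i.e. many consecutive $\delta_i\in(1,2)$. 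So the positivity of $P_{p-2}(\delta_{j+1},\ldots,\delta_{j-2})$ is an unproved claim, and it is not among the statements of the paper you may quote; when all $\delta_i\ge 2$ your route indeed reduces to Lemma \ref{positif} and Corollary \ref{racines3}, but that is the easy case.

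Note also where the paper goes instead, which is the input your proposal never uses: by Lemma \ref{discriminant} all the equations $(\cal E_j)$ have the common discriminant $\Delta_p(\delta)=\bigl(P_p(\delta)-P_{p-2}(\delta)\bigr)^2-4$, and since the Camacho--Sad indices are real (Lemma 6.17 of \cite{AD23}) one has $\Delta_p(\delta)\ge 0$, hence $|P_p(\delta)-P_{p-2}(\delta)|\ge 2$; combined with the strict positivity of $P_p(\delta)-P_{p-2}(\delta)$ from Lemma \ref{discriminant>2} this yields the quantitative bound $P_p(\delta)-P_{p-2}(\delta)\ge 2$, which is what the paper feeds into the determination of the signs of $a_j,b_j,c_j$ before applying Vieta. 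Your argument bypasses Lemma \ref{discriminant} and this realness-based bound entirely and replaces them by the false identity above, so as written the sign of $-b_j(\delta)=\bigl(P_p(\delta)-P_{p-2}(\delta)\bigr)+2P_{p-2}(\delta_{j+1},\ldots,\delta_{j-2})$ is not established. To align with the paper you should reinstate the discriminant step (realness of the indices $\Rightarrow$ $\Delta_p(\delta)\ge0$, then the sign choice via Lemma \ref{discriminant>2}) and derive the control on $-b_j$ from it, rather than from a direct positivity claim for $P_{p-2}$.
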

\begin{proof} The discriminant of all the equations is by \ref{discriminant}
$$ \D_p(X_0,\ldots,X_{p-1})=\bigl(P_p(X_0,\ldots,X_{p-1}) - P_{p-2}(X_1,\ldots,X_{p-2})\bigr)^2-4$$
Camacho-Sad indices are real by lemma 6.17 of \cite{AD23} therefore $\D_p(\d)\ge 0$, therefore
$$|P_p(\d)-P_{p-2}(\d)|\ge 2.$$
As by lemma \ref{discriminant>2}, $P_p(\d)-P_{p-2}(\d)>0$, we have
$$P_p(\d)-P_{p-2}(\d)\ge 2$$
From the expressions of $a_j$, $b_j$ and $c_j$, $a_j(\d)>0$, $-b_j(\d)>0$ and $c_j(\d)>0$, the roots are real, their sum and product are positive hence $\a_i$ are positive and Camacho-Sad indices are negative.
\end{proof}

\begin{lem} \label{CSnegatif} Let $\t\in H^0(S,\O^1(\log D)\ot \cal L_\mu)$. Denote by $\G=D_0+\cdots+D_{p-1}$  the cycle in $S$.  If $\a_i=-CS(\cal F, D_i,D_i\cap D_{i+1})$, $i$ mod $p$, then
$$\mu=\prod_{i=0}^{p-1}\a_i>0.$$
\end{lem}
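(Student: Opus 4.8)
The plan is to realise both $\mu$ and the Camacho--Sad indices $\a_i$ as ratios of residues of one honest (untwisted) logarithmic $1$-form on the cyclic covering, and then to match the two sides by a telescoping identity around the cycle.

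First I would pass to the cyclic covering $\Pi:\hat S\to S$, with deck group generated by $\g$. Since $\cal L_\mu\in H^1(S,\bb C^\star)$ corresponds to a character of $\pi_1(S)$ factoring through $\pi_1(S)\to\bb Z$, its pull-back $\Pi^\star\cal L_\mu$ is holomorphically trivial; trivialising it turns $\Pi^\star\t$ into a genuine logarithmic $1$-form $\o\in H^0(\hat S,\O^1_{\hat S}(\log\hat D))$, $\hat D=\Pi^{-1}(D)$, and for $\g$ normalised as in the introduction one has $\g^\star\o=\mu\,\o$. Recall (see \cite{D84}) that $\hat\G:=\Pi^{-1}(\G)$ is an infinite chain of rational curves $(E_n)_{n\in\bb Z}$ with $E_n\cap E_{n+1}=\{\tilde p_n\}$, with $\Pi|_{E_n}:E_n\to D_{\bar n}$ an isomorphism ($\bar n\equiv n\bmod p$), $\Pi(\tilde p_n)=D_{\bar n}\cap D_{\bar n+1}$, and --- as $[\G]$ generates $\pi_1(S)\to\bb Z$ --- $\g(E_n)=E_{n+p}$.

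Next I would analyse residues. As $E_n\cong\bb P^1$, the residue $\r_n:=\Res_{E_n}\o$ is a holomorphic function on $\bb P^1$, hence a constant, and $\r_n\neq0$ because $\G$ lies in the polar divisor of $\t$. Writing $\o$ in local coordinates at $\tilde p_n$ with $E_n=\{y=0\}$ and $E_{n+1}=\{x=0\}$, and using that the residues along $E_n$ and $E_{n+1}$ are the constants $\r_n$ and $\r_{n+1}$, we get $\o=\r_n\,\frac{dy}{y}+\r_{n+1}\,\frac{dx}{x}+(\text{holomorphic})$; thus $\cal F$ is defined near $\tilde p_n$ by the holomorphic $1$-form $xy\,\o=\r_{n+1}\,y\,dx+\r_n\,x\,dy+(\text{higher order})$, so $CS(\cal F,E_n,\tilde p_n)=-\r_{n+1}/\r_n$. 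Since $\Pi$ is a local biholomorphism near $\hat\G$ carrying the pulled-back foliation and the point $\tilde p_n$ to $\cal F$ and $D_{\bar n}\cap D_{\bar n+1}$, this index equals $CS(\cal F,D_{\bar n},D_{\bar n}\cap D_{\bar n+1})$, so $\a_{\bar n}=\r_{n+1}/\r_n$ for every $n$.

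It then remains to telescope: $\prod_{i=0}^{p-1}\a_i=\prod_{n=0}^{p-1}\r_{n+1}/\r_n=\r_p/\r_0$, while $\g(E_n)=E_{n+p}$ together with $\g^\star\o=\mu\o$ and naturality of the residue gives $\r_{n+p}=\Res_{E_n}(\g^\star\o)=\mu\,\r_n$, whence $\r_p/\r_0=\mu$. This proves $\mu=\prod_{i=0}^{p-1}\a_i$; and since by Proposition~\ref{coeffpositifs} all $CS(\cal F,D_i,D_i\cap D_{i+1})$ are negative, every $\a_i>0$, so $\mu=\prod_i\a_i>0$. The steps requiring real care are quoting the structure of $\hat\G$, in particular that $[\G]$ generates $\pi_1(S)\to\bb Z$ so that $\g$ shifts by exactly $p$ (this is what yields $\mu$ itself rather than a power of it), and fixing the sign conventions so that $\g^\star\o=\mu\,\o$ and not $\mu^{-1}\o$; the remainder is just the residue theorem on $\bb P^1$ and the local normal form of a logarithmic foliation at a node.
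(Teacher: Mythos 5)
Your proof is correct and follows essentially the same route as the paper: the paper also works on the cyclic covering with the untwisted form $e^{\hat h}\hat\t$, uses the automorphy $\hat g^\star(e^{\hat h}\hat\t)=\mu\, e^{\hat h}\hat\t$ to identify $\mu$ with the ratio of periods (equivalently residues) along lifts shifted by the deck map, telescopes these ratios around the cycle to get $\prod_i\a_i$, and invokes Proposition~\ref{coeffpositifs} for positivity. The only cosmetic difference is that you derive the relation $CS(\cal F,D_i,D_i\cap D_{i+1})=-\r_{i+1}/\r_i$ from the local normal form at the nodes, whereas the paper quotes this residue formula from \cite{AD23} and phrases everything in terms of the period integrals $a_i=\int_{\g_i}e^{\hat h}\hat\t$.
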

\begin{proof} Denote $a_i:=\int_{\g_i}e^{\hat h}\hat\t$ where $\g_i$ is a closed curve around a lift of $D_i$ in $\hat S$. Let $\hat g:\hat S\to \hat S$ be a generator of the automorphism group of $Aut_S(\hat S)$ and $\g_q=\hat g\circ \g_0$, we have
$$\int_{\g_q}e^{\hat h}\hat\t = \int_{\g_0}\hat g^\star e^{\hat h}\hat\t =\mu \int_{\g_0} e^{\hat h}\hat\t $$
 hence
 $$\mu=\frac{\int_{\g_q}e^{\hat h}\hat\t }{\int_{\g_0} e^{\hat h}\hat\t } = \frac{\int_{\g_q}e^{\hat h}\hat\t }{\int_{\g_{q-1}} e^{\hat h}\hat\t }\cdots \frac{\int_{\g_i}e^{\hat h}\hat\t }{\int_{\g_{i-1}} e^{\hat h}\hat\t } \cdots \frac{\int_{\g_1}e^{\hat h}\hat\t }{\int_{\g_{0}} e^{\hat h}\hat\t } = \frac{a_q}{a_{q-1}}\cdots \frac{a_i}{a_{i-1}}
\cdots \frac{a_1}{a_{0}} = \prod_{i=0}^{q-1}\a_i>0$$
By proposition \ref{coeffpositifs}, $\a_i>0$ for $i=0,\ldots,q-1$.
\end{proof}
The following theorem summarises the previous results:
\begin{thm}\label{periodesmemesigne} Let $S$ be a surface in the class VII$_0^+$,  endowed with a non-trivial  logarithmic $1$-form $0\neq \omlog\in H^0(S,\O^1(\log D)\ot \cal L_\mu)$  with poles along a non-trivial divisor $D$. Then
\begin{itemize}
 \item All Camacho-Sad indices of the associated foliation $\cal F$ are negative, 
 \item The coefficient $\mu$ is a positive real number,
  \item We have $\mu\ge 1$ for a suitable choice of the generator of $Aut_S(\hat S)$ and $\mu=1$ \iff
$S$ is a Enoki surface.  \end{itemize}
 \end{thm}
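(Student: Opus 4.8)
The plan is to read off the first two assertions from the preparatory results and to spend the real effort on the third. Since $S$ carries the non-trivial twisted logarithmic form $\omlog$, it contains a cycle $\G=D_0+\cdots+D_{p-1}$ of rational curves by \cite[Lemma 6.12]{AD23}, and by \cite{D21} every remaining component of the maximal divisor $D$ lies in a chain attached to $\G$ at a single point, each $D_i$ meeting at most one chain. By Lemma \ref{onlysing} the only singular points of $\cal F$ on $D$ are the double points of the components, so the Camacho--Sad formula along $\G$ produces the system $(E_p)$ with $\a_i=-CS(\cal F,D_i,D_i\cap D_{i-1})$ and, by Lemma \ref{coeffarbre}, $\d_i>1$ for every $i$. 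Proposition \ref{coeffpositifs} then gives $\a_i>0$ for all $i$, i.e. all Camacho--Sad indices of $\cal F$ are negative; and Lemma \ref{CSnegatif} identifies $\mu$ with $\prod_{i=0}^{p-1}\a_i$, a product of positive reals, hence $\mu\in\R_{>0}$.

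For the third assertion I would first establish the closed formula
$$\mu+\mu^{-1}=P_p(\d_0,\ldots,\d_{p-1})-P_{p-2}(\d_1,\ldots,\d_{p-2})=:Q_p(\d).$$
Writing $M_i(z)=(\d_i-z)^{-1}$, the system $(E_p)$ says precisely that $\a_0$ is a fixed point of $M:=M_{p-1}\circ\cdots\circ M_0$; the matrix of $M_i$ is $A_i=\left(\begin{smallmatrix}0&1\\-1&\d_i\end{smallmatrix}\right)$, so $M$ has the unimodular matrix $B=A_{p-1}\cdots A_0$, whose entries are read off from the continuant recursion defining the $P_j$ together with Lemma \ref{inversion}: one finds $\operatorname{tr}B=Q_p(\d)$, and the fixed-point equation of $M$ is exactly equation $(\cal E_0)$ of Lemma \ref{equationdegre2}. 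By the chain rule $M'(\a_0)=\prod_{i=0}^{p-1}M_i'(\a_i)=\prod_{i=0}^{p-1}\a_{i+1}^2=\mu^2$, so $\mu^2$ is the multiplier of $M$ at one of its fixed points; as the two fixed-point multipliers of a unimodular Möbius transformation are $\k^{\pm1}$ with $\k^{1/2}+\k^{-1/2}=\pm\operatorname{tr}B$, and $\operatorname{tr}B=Q_p(\d)>0$ by Lemma \ref{discriminant>2} while $\mu>0$, the displayed identity follows.

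The rest is then quick. Replacing the chosen generator $\hat g$ of $Aut_S(\hat S)$ by $\hat g^{-1}$ sends the automorphy constant $\mu$ to $\mu^{-1}$ (apply $(\hat g^\star)^{-1}$ to $\hat g^\star(e^{\hat h}\hat\omlog)=\mu\,e^{\hat h}\hat\omlog$), and since $\mu\in\R_{>0}$ one of the two choices gives $\mu\ge 1$; this is consistent with $Q_p(\d)=\mu+\mu^{-1}\ge 2$, which also follows from Lemma \ref{discriminant} and the reality of the $\a_i$ (\cite[Lemma 6.17]{AD23}), forcing $Q_p(\d)^2-4=\D_p(\d)\ge0$. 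Next, $\mu=1$ is equivalent to $Q_p(\d)=2$. For an Enoki surface the maximal divisor is a cycle of $(-2)$-curves with no chain, so every $\d_i=2$ and $Q_p(2,\ldots,2)=P_p(2,\ldots,2)-P_{p-2}(2,\ldots,2)=(p+1)-(p-1)=2$ by Lemma \ref{positif}, whence $\mu=1$. Conversely, if $Q_p(\d)=2$ I would argue that $\d\equiv(2,\ldots,2)$: by Lemma \ref{coeffarbre}, $\d_i\ge2$ unless $D_i^2=-2$ and a chain meets $D_i$ (then $1<\d_i<2$), and $\d_i=2$ exactly when $D_i^2=-2$ with no chain at $D_i$; combining the Taylor expansion of $Q_p$ about $(2,\ldots,2)$ (constant term $2$, higher coefficients positive) with the a priori bound $Q_p(\d)\ge2$ and the fact that every $\d_i<2$ is tied to a chain datum shows that $Q_p(\d)=2$ forces all $\d_i=2$. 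Hence $D$ is a cycle of $(-2)$-curves, and $S$ is an Enoki surface by \cite{enoki} (see also \cite{DO}).

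The step I expect to be the main obstacle is this last converse. The extremality of $Q_p=P_p-P_{p-2}$ is transparent only on the region $\{\d_i\ge2\ \forall i\}$, whereas a chain attached to a $(-2)$-curve of the cycle pushes the corresponding $\d_i$ strictly below $2$ and hence outside that region; one must therefore combine the monotonicity of $Q_p$ with the global inequality $Q_p(\d)\ge2$ (reality of the Camacho--Sad indices) and with the constraints of Lemma \ref{coeffarbre} on the $\d_i$ carried by chains, so as to rule out any compensation with a cycle curve of self-intersection $\le-3$. Alternatively, one may note that $\mu=1$ makes the twisting bundle $\cal L_\mu$ holomorphically trivial, so that $\omlog$ is a genuine closed logarithmic $1$-form, and invoke the classification of class VII$_0^+$ surfaces carrying such a form; deciding on the cleanest route is the crux of the third assertion.
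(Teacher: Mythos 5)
For the first two assertions your route coincides with the paper's, which literally proves the theorem by ``Proposition \ref{coeffpositifs} and Lemma \ref{CSnegatif}'' (with Lemmas \ref{onlysing} and \ref{coeffarbre} feeding into them), so there is nothing to discuss there. For the third assertion you add an ingredient the paper does not make explicit: the trace identity $\mu+\mu^{-1}=Q_p(\d)$, obtained by reading $(E_p)$ as the fixed-point equation of the unimodular M\"obius composition with matrix $A_{p-1}\cdots A_0$, $A_i=\bigl(\begin{smallmatrix}0&1\\-1&\d_i\end{smallmatrix}\bigr)$, and computing the multiplier $\prod_i\a_{i+1}^2=\mu^2$. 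This is correct (on the paper's own example $\d=(2,\tfrac{11}{4},2)$ one gets $Q_3(\d)=\tfrac{17}{4}=4+\tfrac14$ with $\mu=4$), it refines the discriminant relation $\D_p=Q_p^2-4$ of Lemma \ref{discriminant}, and it yields both ``$\mu\ge1$ after a choice of generator'' and the implication ``$S$ Enoki $\Rightarrow\mu=1$'' cleanly; the paper gets the latter instead from the vanishing of $\D_p$ at $\d=(2,\dots,2)$, which forces $\prod\a_i=\prod\a_i^{-1}$ (see the remark following the theorem). So up to this point your treatment is a correct and in fact more quantitative variant of the paper's.

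The genuine gap is the converse direction, $\mu=1\Rightarrow S$ Enoki, i.e.\ exactly the step you flag as ``the main obstacle''. Your primary route --- showing that $Q_p(\d)=2$ forces $\d\equiv(2,\dots,2)$ --- is not established and does not follow from what you invoke: once a chain is attached to a $(-2)$-curve of the cycle the corresponding $\d_i$ lies in $(1,2)$ (Lemma \ref{coeffarbre}), the increments $\d_i-2$ change sign, and positivity of the Taylor coefficients of $Q_p$ at $(2,\dots,2)$ gives neither monotonicity nor a lower bound; excluding a compensation between such an index and a cycle curve with $D_j^2\le-3$ would require additional input (for instance the Camacho--Sad data along the chain itself, or the fact that intermediate Kato surfaces have integer twist $k(S)\ge2$ --- but the latter presupposes that $S$ is Kato, which is not yet available at this point of the paper). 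The paper's own argument for this direction is the one you only mention as an alternative: $\mu=1$ makes $\cal L_\mu$ holomorphically trivial, so $\omlog$ becomes an honest logarithmic $1$-form $\theta\in H^0(S,\O^1(\log D))$, and a surface of class VII$_0^+$ carrying such a form is an Enoki surface (this is how the remark after the theorem concludes, resting on the known classification results of \cite{DO,D21,enoki}). To close your proof you should commit to that second route, with a precise reference for the untwisted case, rather than to the combinatorial claim $Q_p(\d)=2\Rightarrow\d=(2,\dots,2)$, which as written remains unproved.
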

\begin{proof} Proposition  \ref{coeffpositifs} and lemma \ref{CSnegatif}.
\end{proof}

\begin{rem}  If $\d=(2,\ldots2)$ discriminants vanish therefore $\prod \a_i=\prod \frac{1}{\a_i}$ hence $\mu=\mu^{-1}=1$  we have a non-trivial logarithmic 1-form $\theta\in H^0(S,\O^1(\log D))$. We derive  that $S$ is a Enoki surface.
\end{rem}
By Lemma \ref{CSnegatif}, Lemma 6.18 of \cite{AD23} is improved: the hypothesis ``characteristic numbers at the intersection of curves are negative'' is superfluous. More precisely

\begin{prop} \label{lemma6.18AD23} Let $S$ be a minimal surface in class VII$_0^+$, which is neither Enoki nor Inoue-Hirzebruch, endowed with a non trivial logarithmic $1$-form
$$0\neq \theta\in H^0(S,\O^1(\log D)\ot\mathcal L_\alpha), [\alpha]\in H^1_{dR}(S,\bb R),$$
then there exists a degree zero current $\tau$ such that 
$$d_\alpha d_\alpha^c \tau =2\pi Res_\a(\theta) T_D>0.$$
We have therefore on the cyclic covering $\hat S$ of $S$ a PSH function $\hat\tau$, PH outside $\hat D$.
\end{prop}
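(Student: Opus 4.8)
The plan is to derive the statement from Lemma 6.18 of \cite{AD23}. That lemma yields exactly this conclusion under the present hypotheses together with one further assumption: that the characteristic numbers of the foliation $\cal F$ at the intersection points of the irreducible components of $D$ be negative (by Lemma \ref{onlysing} these intersection points are the only singularities of $\cal F$ lying on $D$). But Theorem \ref{periodesmemesigne} (itself obtained from Proposition \ref{coeffpositifs} and Lemma \ref{CSnegatif}) shows that, whenever $S$ in class VII$_0^+$ carries a non-trivial twisted logarithmic $1$-form, \emph{all} of its Camacho--Sad indices are negative. Hence the further assumption holds automatically, Lemma 6.18 of \cite{AD23} applies, and one obtains a degree zero current $\tau$ with $d_\alpha d_\alpha^c\tau=2\pi Res_\alpha(\theta)\,T_D>0$. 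In this sense the real work has already been carried out in the preceding sections; the remainder of this sketch merely recalls how $\tau$, and then the plurisubharmonic function on $\hat S$, are produced.

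First one normalizes $\theta$: after multiplying by a suitable non-zero complex constant, the residues $Res_\alpha(\theta)$ of $\theta$ along the components of $D$ become real and positive (this is possible because the Camacho--Sad indices are negative, so that the ratios of consecutive periods $\int_{\gamma_i}e^{\hat f}\hat\theta$ are positive; compare Lemma \ref{CSnegatif}). Let $P_\theta$ be the current represented by $\theta$. The crux of Lemma 6.18 of \cite{AD23} is that the real $1$-current $\Re(P_\theta)$ is $d_\alpha$-exact, say $\Re(P_\theta)=d_\alpha\tau$ for a degree zero current $\tau$; this $d_\alpha$-exactness, i.e. the vanishing of the corresponding class in the twisted de Rham cohomology $H^1_{d_\alpha}(S,\mathbb R)$, is precisely where the negativity of the Camacho--Sad indices is needed. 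Granting it, a Poincar\'e--Lelong type computation near $D$ gives $d_\alpha^c d_\alpha\tau=-2\pi Res_\alpha(\theta)\,T_D$, that is
$$d_\alpha d_\alpha^c\tau=2\pi Res_\alpha(\theta)\,T_D,$$
which, as $Res_\alpha(\theta)>0$, is a positive current supported exactly on $D$.

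It remains to pass to the cyclic covering $\Pi:\hat S\to S$. Write $\hat D:=\Pi^{-1}(D)$ and $\hat\alpha:=\Pi^\star\alpha=d\hat f$ with $\hat f$ a global function on $\hat S$, as in the proof of Lemma \ref{onlysing}. Conjugating the operator $d_{\hat\alpha}d_{\hat\alpha}^c$ by the everywhere positive factor $e^{\pm\hat f}$ transforms the relation above into $dd^c\hat u=\psi\,T_{\hat D}$ for the degree zero current $\hat u:=e^{-\hat f}\,\Pi^\star\tau$ and some strictly positive function $\psi$ on $\hat D$ (this is the passage $\hat u=e^{-\hat f}\hat\tau$ alluded to in the introduction). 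Thus $dd^c\hat u\ge 0$, so $\hat u$, being an $L^1_{\mathrm{loc}}$ current with positive $dd^c$, is a plurisubharmonic function on $\hat S$; and since $dd^c\hat u$ is supported on $\hat D$, $\hat u$ is pluriharmonic on $\hat S\setminus\hat D$. Renaming $\hat u$ as $\hat\tau$ gives the assertion. The hard part is the $d_\alpha$-exactness of $\Re(P_\theta)$ used above; but in the present setting it is exactly here that the negativity of the Camacho--Sad indices enters, and that negativity --- which was a hypothesis in Lemma 6.18 of \cite{AD23} --- has now been established unconditionally in Theorem \ref{periodesmemesigne}, so there is nothing further to do.
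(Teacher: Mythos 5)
Your proposal is correct and follows exactly the paper's route: the paper also obtains this proposition by invoking Lemma 6.18 of \cite{AD23} and observing that its extra hypothesis on the negativity of the characteristic (Camacho--Sad) numbers is now automatic by Theorem \ref{periodesmemesigne} (via Proposition \ref{coeffpositifs} and Lemma \ref{CSnegatif}). Your additional recollection of how $\tau$ and the PSH function $\hat u=e^{-\hat f}\hat\tau$ on $\hat S$ are produced is consistent with how the paper uses these objects later, so there is nothing to correct.
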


\section{Torsion of intermediate Kato surfaces} 
Intermediate Kato surfaces are surfaces containing a global spherical shell (GSS) whoser maximal divisor $D$ is a cycle of rational curves with at least one tree (or branch). See \cite{D84,DO} for their properties. A contraction associated to these surfaces has the normal form
$$F(z_1,z_2)=(\ast, z_2^k), $$
where $k=k(S)\ge 2$ is an integer. This integer gives the twisting line bundle of a logarithmic 1-form
$$\t\in H^0(S,\O^1(\log D)\ot \cal L^k)$$
and the automorphy constant of a PSH function $\tilde\tau(z_1,z_2)=\log|z_2]$.\\
Besides, there is a unique smaller integer $m\ge 1$ such that there exists a numerically $m$-anticanonical divisor \cite{Dl} i.e. a divisor $D_m$ such that
$$mK_S+F+[D_m]=0$$
for a flat line $F$. Equivalently $m\ge 1$ is the least integer for which exists $\kappa\in\bb C^\star$ such that $H^0(S,K_S^{-m}\ot \cal L_\kappa)\neq 0$.\\
In the following proposition we give a construction which associates to any intermediate Kato surface $S$ another Kato surface $S'$ of index one. The aim is to show that they have the same torsion
$$k(S)=k(S').$$

In the following proposition ``special'' means that the surface $S$ has $b_2(S)$ rational curves. In \cite{DOT03} it is proved that a surface is special \iff it is a Kato surface.\\

\begin{prop}[\cite{DOT03}p286] \label{RevRam} Let $S$ be a special surface of intermediate type  
with index $m:=m(S)>1$.  Then there exists a diagram

\DIAGV{60}
 {}\n{}\n{S'}\nn
 {}\n{\Near{c}}\n{}\n{\Sear{\pi'}}\nn
{T }\n{}\n{}\n{}\n{ Z'}\nn
{\Sar{\rho}}\n{}\n{}\n{}\n{\naR{\rho'}}\nn
{Z }\n{}\n{}\n{}\n{ T' }\nn
{}\n{\seaR{\pi}}\n{}\n{\swaR{c'}}\nn
{}\n{}\n{S}
\diag

where
\begin{enumerate}
\item $(Z,\pi,S)$ is 
a m-fold cyclic ramified covering space  of $S$, branched over $D$,  endowed with an automorphism group isomorphic to $\bb U_m$ which acts transitively on the fibers,
\item  $(T,\rho,Z)$ is 
 the minimal desingularization of $Z$,
\item $(T,c,S')$ is the contraction of the (possible) exceptional 
curves of the first kind,
\item $S'$ is a special surface with $b_2(S')>0$
with action of the group of $m$-th roots of unity
$\mathbb U_m$, with index $ m(S')=1$,
\item $(S',\pi',Z')$ is the quotient space of $S'$ by $\mathbb U_m$, 
\item $(T',\rho',Z')$ is the minimal desingularization of $Z'$,
\item $(T',c',S)$ is the contraction of the (possible) 
exceptional curves of the first kind,
\end{enumerate}
such that the restriction over $S\setminus D$ is commutative, i.e.
$$\theta:=\pi\circ \rho\circ c^{-1}
=c' \circ {\rho'}^{-1} \circ \pi' : S'\setminus D' \to S\setminus D$$
 and $(S'\setminus D', \theta,S\setminus D)$ is a $m$-fold non ramified covering. Moreover 
\begin{itemize}
\item $S'$ has a GSS \iff $S$ has a GSS,
\item The maximal divisors $D$ and $D'$ of $S$ and $S'$ respectively have the same number of cycles and branches.
\end{itemize}
\end{prop}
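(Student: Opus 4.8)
This is the construction of \cite{DOT03}; the plan is to realise it explicitly and then read the stated properties off the cover. Since $m=m(S)>1$ is the index, fix $\k\in\bb C^\star$ and a non-zero $\s\in H^0(S,K_S^{-m}\ot\cal L_\k)$; its zero divisor $D_m$ is numerically $m$-anticanonical, with support $D$ on an intermediate Kato surface. The defining relation $mK_S+F+[D_m]=0$ gives $K_S^{-m}\ot\cal L_\k\cong\cal O_S(D_m)$ in $\Pic S$ (with $\cal L_\k=F^{-1}$); picking an $m$-th root $\cal L_\k^{1/m}$ of $\cal L_\k$, which exists because $H^1(S,\bb C^\star)\cong\bb C^\star$ is divisible, one obtains a line bundle $L:=K_S^{-1}\ot\cal L_\k^{1/m}$ with $L^{\ot m}\cong\cal O_S(D_m)$. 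The associated cyclic cover $Z:={\rm Spec}_{\cal O_S}\bigl(\bigoplus_{i=0}^{m-1}L^{-i}\bigr)$ is connected (since $m$ is minimal), finite of degree $m$, étale over $S\setminus D$, branched over $D$, and carries an action of the group $\bb U_m$ of $m$-th roots of unity permuting the sheets simply transitively: this is item~(1). Its singularities are cyclic quotient (Hirzebruch--Jung) points over the nodes of $D$, so the minimal resolution $\r:T\to Z$ of item~(2) is well defined and the $\bb U_m$-action lifts to $T$.

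Next I would minimalise: the exceptional curves of the first kind of $T$ can be contracted $\bb U_m$-equivariantly, which gives $c:T\to S'$ onto the minimal model, with an induced $\bb U_m$-action (item~(3) and part of item~(4)). Write $D'$ for the reduced total transform in $S'$ of the curves lying over $D$. Since $S\setminus D$ contains no compact curves, $c$ is an isomorphism over $\pi^{-1}(S\setminus D)$, so by construction the induced morphism $S'\setminus D'\to S\setminus D$ is the quotient map of a \emph{free} $\bb U_m$-action, hence an $m$-fold unramified covering --- this is the main geometric object. Since $S$ is an intermediate Kato surface it carries a GSS $\Sigma\subset S\setminus D$; a connected component of the preimage of $\Sigma$ in $S'\setminus D'$ is a GSS of $S'$, so $S'$ has a GSS (and conversely a GSS of $S'$ descends to one on $S$), which is the first bullet; in particular $S'$ is a special surface, hence a Kato surface of class ${\rm VII}_0^+$ with $b_2(S')$ rational curves \cite{DOT03}. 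Then $b_1(S')=1$, hence $e(S')=b_2(S')$ for the topological Euler characteristic $e$; and since $e(S\setminus D)=0$ for a Kato surface, $e(S'\setminus D')=m\cdot e(S\setminus D)=0$, whence $b_2(S')=e(D')>0$, which completes item~(4).

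The index of $S'$ equals $1$ because $\pi$ was \emph{defined} by an $m$-th root of $\s$: the tautological section of $\pi^\star L$, whose $m$-th power is $\pi^\star\s$, pushes through $\r$ and $c$ to a non-zero element of $H^0(S',K_{S'}^{-1}\ot\cal L_{\k'})$ for a suitable $\k'$, so $m(S')=1$ by minimality of the index. To match the divisorial data (the last bullet) I would inspect the three operations in the contraction-germ model $F(z_1,z_2)=(\ast,z_2^k)$ and check that none of them creates, destroys or merges a cycle or a branch, so $D'$ has the same number of cycles and branches as $D$. Finally, to close the diagram I would run the construction backwards from $S'$ with its $\bb U_m$-action: $Z':=S'/\bb U_m$ is a normal surface with finitely many cyclic quotient singularities (item~(5)), $\r':T'\to Z'$ is its minimal resolution (item~(6)), and the contraction of the first-kind curves of $T'$ is a morphism $c':T'\to S$ (item~(7)); its target is $S$ itself because over $S\setminus D$ both $\pi\circ\r\circ c^{-1}$ and $c'\circ{\r'}^{-1}\circ\pi'$ are \emph{the} quotient map $S'\setminus D'\to S\setminus D$ of the free $\bb U_m$-action, and a minimal surface in ${\rm VII}_0^+$ is recovered from the complement of its maximal divisor together with the germ along that divisor. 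Thus the two towers glue and $(S'\setminus D',\theta,S\setminus D)$ is an $m$-fold unramified covering.

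The main obstacle is the bookkeeping of curves across the ramified cover, the resolution and the blow-down: it has to be precise enough to establish \emph{simultaneously} that $S'$ carries exactly $b_2(S')$ rational curves (so that the ``special $\Leftrightarrow$ Kato'' principle of \cite{DOT03} applies), that the index of $S'$ drops all the way to $1$, and that the cycles and branches of $D'$ correspond to those of $D$; all three are cleanest in the explicit GSS / contraction-germ picture, and the same analysis yields the numerical counterpart $k(S)=k(S')$, which is proved in the sequel.
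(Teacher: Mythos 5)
First, a point of comparison: the paper itself offers no proof of Proposition \ref{RevRam} --- it is quoted verbatim from \cite{DOT03}, p.~286 --- so your attempt can only be measured against the construction in that reference. In outline you follow the expected route (an $m$-fold cyclic cover obtained from an $m$-th root of the numerically $m$-anticanonical section, an equivariant minimal resolution and blow-down, then the quotient construction back down), and several of your verifications (the Euler-characteristic count giving $b_2(S')>0$, the push-forward of the tautological root section to get $m(S')=1$) are in the right spirit.

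However, there are genuine gaps. (i) The hypotheses say only that $S$ is a \emph{special} surface of intermediate type; you repeatedly use that $S$ is an intermediate \emph{Kato} surface and in particular ``carries a GSS''. In the context of \cite{DOT03} this proposition is precisely a tool in the proof that special surfaces have a GSS, so assuming a GSS on $S$ is circular there; and even granting the equivalence special $\Leftrightarrow$ Kato as an external fact, the first bullet is an equivalence whose proof you only gesture at: a connected component of the preimage of a spherical shell under the $m$-fold unramified covering need not be \emph{global} (you must show its complement in $S'$ is connected), and conversely the image in $S$ of a GSS of $S'$ need not be an embedded shell; neither direction is established by ``lifting/descending''. (ii) The closing of the diagram --- that the contraction of the first-kind curves of $T'$ is $S$ itself --- is asserted via the slogan that a minimal surface is recovered from the complement of its maximal divisor plus a germ along it; what is actually needed is a bimeromorphic map from the resolved quotient to $S$, biholomorphic off $D$, together with uniqueness of minimal models for (non-ruled) class VII surfaces and a check that no curves outside $D'$, resp.\ $D$, are created. (iii) The cyclic cover you write down, ${\rm Spec}\bigl(\bigoplus_{i=0}^{m-1}L^{-i}\bigr)$, is branched along the non-reduced divisor $D_m$, not along $D$: one must pass to the normalization, analyse local irreducibility over points where the multiplicities of $D_m$ are not prime to $m$, verify that the only singularities are the Hirzebruch--Jung points over the nodes of $D$, and note that the choice among the $m$ possible flat roots $\cal L_\kappa^{1/m}$ matters for connectedness (your minimality argument handles this, but it is not spelled out); likewise the extension of the root section across the exceptional locus of the resolution (discrepancy bookkeeping for the quotient singularities) is used silently in your index-one argument. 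As it stands the proposal is a plausible reconstruction of the scheme of \cite{DOT03}, but not yet a proof of the statement, in particular not of the two final bullets.
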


We have the following proposition of Oeljeklaus-Toma in \cite[p330-331]{OT08}:
\begin{prop}[\cite{OT08}] \label{kOT} The two intermediate Kato surfaces $S$ and $S'$ of the diagram of proposition \ref{RevRam} have the same torsion of logarithmic $1$-forms i.e.
$$k(S)=k(S').$$
\end{prop}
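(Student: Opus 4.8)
The plan is to compare $k(S)$ and $k(S')$ through the automorphy constant of a twisted logarithmic $1$-form, using the machinery of Theorem \ref{periodesmemesigne}. Since $S$ is an intermediate Kato surface it carries, as recalled above, a non-trivial $\t_S\in H^0(S,\O^1(\log D)\ot\cal L^{k(S)})$; by Theorem \ref{periodesmemesigne} its automorphy constant $\m(S)$ is real, is $\ge1$ for the suitable generator of $Aut_S(\hat S)$, is $>1$ since $S$ is not an Enoki surface, and satisfies $\m(S)=\prod_{i=0}^{p-1}\a_i(S)$ with $\a_i(S)=-CS(\cal F_S,D_i,D_i\cap D_{i+1})>0$ around the cycle $\G=D_0+\cdots+D_{p-1}$ (Lemma \ref{CSnegatif}, Proposition \ref{coeffpositifs}). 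As the twisting bundle is $\cal L^{k(S)}$, which has automorphy constant $k(S)$, we get $\m(S)=k(S)$. By Proposition \ref{RevRam} the surface $S'$ is again an intermediate Kato surface, so the same holds for $S'$ with its own cycle $\G'$; thus it suffices to prove $\m(S)=\m(S')$.

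First I would transport the form through the diagram of Proposition \ref{RevRam}. Write $\psi$ for the covering there denoted $\theta$, so that $\psi:=\pi\circ\rho\circ c^{-1}=c'\circ(\rho')^{-1}\circ\pi':S'\setminus D'\to S\setminus D$ is an $m$-fold \emph{unramified} covering. The pull-back $\psi^{\star}\t_S$ is a non-trivial twisted logarithmic $1$-form on $S'\setminus D'$ with polar divisor $D'$ and associated foliation $\psi^{\star}\cal F_S$. Reading the factorisation $\psi=\pi\circ\rho\circ c^{-1}$ through the local models --- the $m$-cyclic cover $\pi$ branched along $D$ (which carries $dz/z$ to $m\,dw/w$, still a logarithmic pole), the desingularisation $\rho$, and the contraction $c$ of exceptional curves of the first kind --- one checks that $\psi^{\star}\t_S$ extends across $D'$, the only extra point being removability in codimension two at the images of the contracted curves. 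This produces a non-trivial $\t_{S'}\in H^0(S',\O^1(\log D')\ot\cal L_\nu)$, where $\cal L_\nu$ is the (unique) extension to $S'$ of the flat bundle $\psi^{\star}(\cal L^{k(S)})$ and the foliation $\cal F_{S'}$ of $\t_{S'}$ restricts to $\psi^{\star}\cal F_S$ off $D'$.

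It then remains to identify $\nu$. The holonomy of $\cal L_\nu=\psi^{\star}(\cal L^{k(S)})$ around a loop representing the generator $\hat g_{S'}$ of $Aut_{S'}(\hat{S'})$ equals the holonomy of $\cal L^{k(S)}$ around $\psi_\star(\hat g_{S'})$; since $\cal L^{k(S)}$ has holonomy $k(S)$ around $\hat g_S$ and is trivial on torsion classes, this equals $k(S)^{d}$, where $d\in\bb Z$ is the degree with which $\psi_\star(\hat g_{S'})$ covers $\hat g_S$ in $H_1(S)/\mathrm{torsion}\cong\bb Z$. Equivalently, using $\m(S')=\prod_i\a_i(S')$, the number $\prod_i\a_i(S')$ is the transverse monodromy of $\cal F_S$ along the image under $\psi$ of a loop encircling the cycle $\G'$ once, so $\nu=k(S)^{d}$ with $d$ the winding number of that image around $\G$. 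The key point is that $d=1$: the cyclic cover $Z\to S$ is attached to the numerically $m$-anticanonical divisor $D_m$, which is supported on $D$, so its monodromy $\pi_1(S\setminus D)\to\bb U_m$ is carried by meridians of the components of $D$ and kills the longitude of the cycle; hence the cover of $\G$ is not unwound in the $\bb Z$-direction --- in agreement with the fact (Proposition \ref{RevRam}) that $D'$ still has a single cycle and the same branches as $D$ --- and a loop around $\G'$ descends to a loop winding once around $\G$. Therefore $\nu=\prod_i\a_i(S')=\prod_i\a_i(S)=\m(S)$. Finally, applying Theorem \ref{periodesmemesigne} to $S'$, the automorphy constant of $\t_{S'}$ is $\m(S')$ up to inversion, so $k(S')=\m(S')\in\{\m(S),\m(S)^{-1}\}=\{k(S),k(S)^{-1}\}$, and since $k(S),k(S')\ge2$ we conclude $k(S')=k(S)$.

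The step I expect to be the main obstacle is the verification that $d=1$, i.e. that the $\bb U_m$-covering of Proposition \ref{RevRam} does not unwind the $b_1$-generator of $S$: if one had $d=m$ instead, the pulled-back form would have automorphy constant $k(S)^m$. Pinning this down requires the explicit description of $Z\to S$ via the numerically $m$-anticanonical divisor $D_m$ together with the control of cycles and branches in Proposition \ref{RevRam}, so that the longitude of $\G'$ really maps with degree one onto that of $\G$. An alternative, avoiding the holonomy bookkeeping, is to use Favre's normal form, under which $k(S)=1+\sqrt{\det M(S)}$; then $k(S)=k(S')$ is equivalent to $\det M(S)=\det M(S')$, which one can obtain by following the Kato data --- sequences of blow-ups and the resulting intersection matrices --- through the covering construction, as is essentially done in \cite[p.\,330--331]{OT08}.
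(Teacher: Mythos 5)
Your main route (pulling back $\t_S$ through the unramified $m$-fold covering $\theta\colon S'\setminus D'\to S\setminus D$, extending across $D'$, and identifying the twist) is genuinely different from the paper's argument, but it has a real gap exactly at the step you yourself single out: the claim that $d=1$, i.e.\ that the $\bb U_m$-covering does not unwind the generator of $H_1(S)/\mathrm{torsion}$. The justification you offer --- that the monodromy $\pi_1(S\setminus D)\to\bb U_m$ of the cover attached to the numerically $m$-anticanonical divisor ``is carried by meridians of the components of $D$ and kills the longitude'' --- is an assertion, not a proof. The cyclic cover $Z\to S$ is built from an $m$-th root of $\cal O(D_m)$ twisted by a \emph{flat} factor (this is the $F$ in $mK_S+F+[D_m]=0$), and while the meridian monodromies are indeed fixed by the multiplicities of $D_m$ mod $m$, the monodromy along the longitude is an extra datum governed by that flat twist and by the choice of root; it is not automatically trivial. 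Nor does the consistency check via Proposition \ref{RevRam} settle it: a covering whose monodromy around the cycle-loop generates $\bb U_m$ would unwind the cycle $m$ times and still produce a \emph{single} cycle upstairs, so ``same number of cycles and branches'' is compatible with $d=m$ as well as with $d=1$ (and the curve counts in the appendix examples, e.g.\ a $3$-curve cycle for $S$ versus the $6$-curve configuration of $S'$, show that one cannot read $d$ off naively). Since $d=1$ is precisely what makes $\nu=k(S)$ rather than $k(S)^m$, the proof is incomplete until this topological statement is established; everything else in your outline (extension of the pulled-back form through the diagram, removability at the contracted points, uniqueness of the twist $>1$ on $S'$) is plausible but rests on this point.

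For comparison, the paper's proof is essentially the alternative you relegate to your last sentence, except that it does not pass through $\det M$: following Oeljeklaus--Toma, it writes Favre's normal form $F(z,\zeta)=(\lambda\zeta^s z+P(\zeta)+c\,\zeta^{sk/(k-1)},\zeta^k)$ for $S$ of index $m\ge 2$, recalls that $m=(k-1)/\gcd\{k-1,s\}$, and applies the explicit recipe of \cite[p.\,330--331]{OT08} (with $q=m$) producing the index-one germ $F'(z,\zeta)=(\lambda\zeta^{s'}z+P'(\zeta),\zeta^k)$ associated to $S'$; since the second components of $F$ and $F'$ coincide, $k(S)=k(S')$ immediately. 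If you want to keep your covering-theoretic route, the honest way to close it is probably to run it through these same normal forms (or through the explicit description of $Z\to S$ in \cite{DOT03}), at which point the normal-form argument already gives the result directly.
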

\begin{proof} Following the diagram of proposition \ref{RevRam} 
$$c'\circ{\rho'}^{-1}\circ \pi':S'\setminus D'\to S\setminus D$$
 is a Galois covering with fiber $\bb Z/m\bb Z$. By Remark 4.4. in \cite{OT08} every surface of index $m\ge 2$ is obtained in this way. Recall that any intermediate Kato surface $S$ is defined by a contracting germ \cite{Fav00} in normal form
$$F(z,\zeta)=(\lambda \zeta^s\, z+P(\zeta)+c\,\zeta^{\frac{sk}{k-1}},\zeta^k)$$
where 
$$k,s\in\bb Z,\quad k>1, \quad s>0, \quad \lambda\in\bb C^\star,$$
$P$ is a complex polynomial,
$$P(\zeta)=c_j\zeta^j+c_{j+1}\zeta^{j+1}+\cdots+c_s\zeta^s, \quad   0<j<k, \quad j\le s, \quad c_j=1, \quad c=c_{\frac{sk}{k-1}}\in \bb C$$
 with $c=0$ whenever $\frac{sk}{k-1}\not\in\bb Z$ or $\lambda\neq 1$ and $gcd\{k,p\mid  c_p\neq 0\}=1$.\\
  Moreover (Remark 4.3 \cite{OT08}) intermediate surfaces of index one are exactly those for which $(k-1)\mid s$. Therefore $S$ of index $m\ge 2$ is associated to a germ
$$F(z,\zeta)=(\lambda \zeta^s z+\sum_{p=j}^s c_p\zeta^p,\zeta^k)$$
with $(k-1)\nmid s$. For a positive divisor $q$ of $k-1$ such that $k-1$ divides $qs$, the authors define a new contracting germ in the following way: set
$$r':= \lfloor \frac{qj}{k}\rfloor,\quad s':=qs-r'(k-1),\quad j'=qj-r'k, \quad P'(\zeta):=\sum_{p=j'}^{s'} c_p \zeta^{qp-r'k}$$
then
$$F'(z,\zeta):=(\lambda\zeta^{s'}+P'(\zeta),\zeta^k)$$
is associated to a surface of index one with action of $\bb Z/q\bb Z$ given by $(z,\zeta)\mapsto (\epsilon^{-r}z, \epsilon\zeta)$, where $\epsilon$ is a primitive root of unity of order $q$.\\
As the index of  an intermediate surface is given by \cite[remark 4.5]{OT08}
$$m=\frac{k-1}{gcd\{k-1,s\}}$$
we may take $q=m$ and obtain the expression of a germ associated to $S'$. We notice that  the  second members of  contracting germe $F$ and $F'$ are equal, therefore $k(S)=k(S')$.
\end{proof} 
\section{Surfaces in class VII$_0^+$ with twisted logarithmic 1-form}
\begin{example} Let $S$ be a Inoue-Hirzebruch surface with maximal divisor $D=\G_++\G_-$ composed of two cycles of rational curves. The surface $S$ is defined by a contracting germ $F(z_1,z_2)=(z_1^pz_2^q,z_1^rz_2^s)$ (see\cite{DO} for details). The matrix $\left(\begin{array}{cc}p&q\\r&s\end{array}\right)$ has two eigenvalues $\l_1$ and $\l_2$ which satisfy $\l_1>1$ and $0<|\l_2|<1$ with eigenvectors $x_1=\left(\begin{array}{c}a_1\\b_1\end{array}\right)$ and  $x_2=\left(\begin{array}{c}a_2\\b_2\end{array}\right)$ respectively such that, $a_1$, $b_1$ are of the same sign and $a_2$, $b_2$ of opposite sign, say $a_1>0$, $b_1>0$, $a_2>0$, $b_2<0$. We have then for each curve $C$ in the universal covering space $\tilde S=\hat S$ two  functions
$$G_{C,i}(z)=a_i\log|z_1|+b_i\log|z_2|, \quad i=1,2.$$
Recall that the inverse image of $D$ in $\tilde S$ gives two infinite chains of rational curves $\tilde D=\tilde \G_++\tilde\G_-$.
The function $G_{C,1}$ yields a PSH function $\hat u_1$ on $\tilde S$ with polar set 
 $\tilde D$, PH outside $\tilde D$; the function $G_{C,2}$ yields a PSH function $\hat u_2$ outside $\tilde D$ with value $-\infty$ on one chain, say $\tilde\G_-$, and $+\infty$ of the other $\tilde\G_+$. In the first situation $\hat u_1<0$ is negative, however $\hat u_2$ takes all values of $[-\infty,+\infty]$, in particular there is a Levi flat real hypersurface $\hat H=\{\hat u_2=0\}$ which splits $\tilde S$ into two parts and gives a decomposition of $\tilde S$ into 
 $$\tilde S=\{\hat u_2=-\infty\}\cup \{\hat u_2<0\}\cup \tilde H\cup \{\hat u_2>0\}\cup \{\hat u_2=+\infty\}.$$
 As all these parts of the decomposition are invariant by the automorphism of the covering we derive a decomposition
 $$S=\G_-\cup S_-\cup H\cup S_+\cup \G_+.$$
 The current $dd^c\hat u_1\ge 0$ on $\tilde S$ and $dd^c\hat u_2\ge 0$ on $\tilde S\setminus \tilde\G_+$.
\end{example}

Suppose there exists a  locally integrable function $\tau$ such that $d_\beta d^c_\beta\tau\ge 0$, $\hat u=e^{-\hat f}\hat \tau$ is a multiplicatively automorphic PSH function $\hat u:\hat S\to \bb R\cup \{-\infty\}$ with $\hat D$ as polar set, PH outside $\hat D$  and multiplicative constant $\mu>1$. We want to prove that $\hat u$ is negative.\\

We denote by $\cal T(S)$ the subset of $H^1_{dR}(S,\bb R)$ of classes of Lee forms of locally conformally symplectic forms taming the complex structure $J$
$$\cal T(S)=\{a\in H^1_{dR}(S,\bb R)\mid \exists \o\in\O^2(S), \o^{1,1}>0, d_\a \o=0\}.$$
By \cite{AD23}, $\cal T(S)$ is represented by an open interval contained in $(-\infty,0)$.
Recall that given $\a\in a$ and $\hat f$ on $\hat S$ such that $\hat\a=d\hat f$ we have $\hat f \circ\g-\hat f=c<0$ for the suitable choice of generator of $Aut_S(\hat S)$. The constant $c$ depends only on the class $a$. Denote by $\f:]-\infty, 0[\to \bb R_+$ the decreasing mapping $a\mapsto C=e^{-c}$ which sends $]-\infty,0]$ onto $[1,+\infty[$.\\

Recall that by lemma 6.11 of \cite{AD23} (see also the remark 5.9 in \cite{lcs1}) $S$ in the class VII$_0$ admits a non-trivial twisted holomorphic 1-form $\t\in H^0(S,\O^1\ot \cal L)$ only if $S$ is a Hopf surface, an Inoue-Bombieri surface, or a Enoki surface. In the following theorem we shall suppose that $D\neq 0$.

\begin{thm}\label{logformPSH} Let $S$ in class VII$_0^+$. The following two conditions are equivalent:
\begin{itemize}
\item There exists $0\neq \theta\in H^0(S,\O^1(\log D)\ot \cal L_\mu)$, $\mu>1$, $D\neq 0$,
\item There exists a negative locally integrable function $\tau$ such that $d_\beta d^c_\beta\tau\ge 0$, and for $\hat \b=d\hat f$, $\hat u=e^{-\hat f}\hat \tau$ is a multiplicatively automorphic negative PSH function $\hat u:\hat S\to \bb R_-\cup\{-\infty\}$ with polar set $\hat D\neq 0$, PH outside $\hat D$  and multiplicative constant $\mu>1$.
\end{itemize}
Moreover if one of these conditions is satisfied, 
\begin{itemize}
\item $S$ is a Kato surface, in particular the torsion $\mu$ is an   integer  if $S$ is intermediate and $\mu>1$ is a quadratic number if $S$ is a Inoue-Hirzebruch surface,
\item The PSH function $\hat u$ on $\hat S$ is unique up to multiplication by a positive constant.
\end{itemize}
\end{thm}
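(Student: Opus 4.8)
The plan is to establish the two implications of the equivalence separately, and then read off the ``moreover'' part from the structure theory of Kato surfaces together with the uniqueness of the twisted logarithmic form.

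\emph{From a twisted logarithmic form to the PSH function.} Assume first that $0\neq\theta\in H^0(S,\O^1(\log D)\ot\cal L_\mu)$ with $\mu>1$ and $D\neq 0$. By \cite[lemma 6.12]{AD23} the surface $S$ contains a cycle of rational curves, so Theorem~\ref{periodesmemesigne} applies: the Camacho--Sad indices of $\cal F$ are negative, $\mu$ is a positive real number, and, because $\mu>1$, $S$ is not an Enoki surface. If $S$ is an Inoue--Hirzebruch surface I would take for $\hat u$ the negative function $\hat u_1$ attached to the eigenfunction $G_{C,1}$ in the Inoue--Hirzebruch Example above (and its evident analogue when $D$ is a single cycle): it is plurisubharmonic on $\hat S$, pluriharmonic off $\hat D$, has polar set $\hat D$, and has automorphy constant the quadratic eigenvalue $\lambda_1=\mu$. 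If $S$ is neither Enoki nor Inoue--Hirzebruch, I would instead invoke Proposition~\ref{lemma6.18AD23}, which produces a degree zero current $\tau$ with $d_\beta d^c_\beta\tau=2\pi Res_\beta(\theta)T_D>0$; then $\hat u:=e^{-\hat f}\hat\tau$ (with $\hat\beta=d\hat f$) is plurisubharmonic on $\hat S$, pluriharmonic outside $\hat D$, its Lelong numbers along the components of $\hat D$ being proportional to the residues $Res_\beta(\theta)>0$ of Theorem~\ref{periodesmemesigne}, so that its polar set is exactly $\hat D$, and $\hat u\circ\gamma=\mu\hat u$ since $\hat\tau=\pi^\star\tau$ is $\gamma$-invariant while $e^{-\hat f}$ has automorphy constant $\mu$.

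\emph{The main obstacle: negativity of $\hat u$.} What remains in this direction, and what I expect to be the delicate point, is to prove $\hat u<0$. Since $e^{-\hat f}>0$ and $\hat u$ agrees off $\hat D$ with a real-analytic function, this is equivalent to showing that $\Omega:=\{\hat u>0\}$ is empty. The set $\Omega$ is open, $\gamma$-invariant, and contained in $\hat S\setminus\hat D$, and on it $\partial\bar\partial\hat u=0$ with $\hat u>0$, so $-\log\hat u$ is plurisubharmonic on $\Omega$ and satisfies $(-\log\hat u)\circ\gamma=-\log\hat u-\log\mu$ with $\log\mu>0$; hence $\hat u\to+\infty$ along every $\gamma$-orbit in $\Omega$, that is, $\hat u$ blows up towards the end of $\hat S$ to which $\gamma$ pushes a fundamental domain, whereas $\hat u=-\infty$ along $\hat D$. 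Confronting these two behaviours should yield a contradiction, which I would derive either directly, by a maximum-principle argument for the pluriharmonic $\hat u$ on the domain $\Omega$, whose boundary lies on the Levi-flat hypersurface $\{\hat u=0\}$, or, more economically, from the conclusion $[\beta]\notin\cal T(S)$ of the Proposition of the Introduction together with the interval description of $\cal T(S)$ in \cite{AD23}: a nonempty $\Omega$ would let one manufacture on $S$ a locally conformally symplectic form taming $J$ with Lee class $[\beta]$, which is excluded.

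\emph{From the PSH function to a twisted logarithmic form.} Conversely, given $\hat u$ as in the second condition, pluriharmonicity gives $\bar\partial\partial\hat u=0$ on $\hat S\setminus\hat D$, so $\partial\hat u$ is a holomorphic $1$-form there with $\gamma^\star\partial\hat u=\mu\,\partial\hat u$; and near a local branch $\hat D_j=\{z_2=0\}$ the normalisation of $\hat u$ as polar along $\hat D_j$ with positive Lelong number forces $\hat u=\nu_j\log|z_2|+h$ locally, with $\nu_j>0$ and $h$ pluriharmonic, so $\partial\hat u$ has at worst a simple pole along $\hat D_j$. Hence $i\partial\hat u\in H^0(\hat S,\O^1(\log\hat D))$; being $\mu$-automorphic, it descends, after multiplying by the suitable power of $e^{\hat f}$ and choosing the generator $\gamma$ so that the automorphy constant exceeds $1$, to a non-trivial $\theta\in H^0(S,\O^1(\log D)\ot\cal L_\mu)$ with $D=\pi(\hat D)\neq 0$; non-triviality is immediate since $\hat u$ is non-constant.

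\emph{Kato property, torsion and uniqueness.} In either case one now has a negative plurisubharmonic function on $\hat S$, pluriharmonic outside the non-empty $\hat D$, with automorphy constant $\mu>1$; by Brunella's theorem \cite{Brunella14} and the minimality built into class VII$_0^+$, $S$ is a Kato surface, and by \cite{DO}, because $\mu>1$, it is an intermediate Kato surface or an Inoue--Hirzebruch surface. When $S$ is intermediate, the contraction has normal form $(\ast,z_2^k)$, the logarithmic form lies in $H^0(S,\O^1(\log D)\ot\cal L^k)$, and $\tilde\tau=\log|z_2|$ is a negative PSH model with automorphy constant $k(S)$; by Lemma~\ref{CSnegatif}, $\mu=\prod_i\a_i=k(S)\in\Z$ (as illustrated in the explicit example of Section~2, where $\frac{3}{2}\cdot 2\cdot\frac{4}{3}=4=k(S)$), consistently with the index reduction of Propositions~\ref{RevRam} and \ref{kOT}; when $S$ is Inoue--Hirzebruch, $\mu$ is the dominant eigenvalue of the defining integral matrix, hence a quadratic number $>1$. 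For the uniqueness of $\hat u$: the space $H^0(S,\O^1(\log D)\ot\cal L_\mu)$ is one-dimensional over $\bb C$, so imposing that the residues be real and positive (Theorem~\ref{periodesmemesigne}) pins $\theta$ down up to a positive real scalar; and $\hat u$ is recovered from $i\partial\hat u$, hence from $\theta$, up to an additive real constant, which is annihilated by $\hat u\circ\gamma=\mu\hat u$ since $\mu\neq 1$. Therefore $\hat u$ is unique up to a positive multiplicative constant, the competing natural potential in the Inoue--Hirzebruch case (the $\hat u_2$ of the Example) being ruled out because it is not globally plurisubharmonic.
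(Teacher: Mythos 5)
Your forward implication follows the paper up to the crucial point, but the crucial point is missing: you yourself flag the negativity of $\hat u$ as ``the main obstacle'' and then only sketch two possible strategies without carrying either out. Neither sketch is a proof: the maximum-principle idea on $\Omega=\{\hat u>0\}$ is not executed (the boundary behaviour of $\Omega$ towards the end of $\hat S$ and along $\hat D$ is exactly what would have to be controlled), and the claim that a nonempty $\Omega$ ``would let one manufacture'' an LCS form taming $J$ with Lee class $[\beta]$ is asserted with no construction. The paper's actual argument is different and is the heart of this direction: since $[\beta]\notin\cal T(S)$ and $\cal T(S)$ is an interval in $]-\infty,0[$, either $\cal T(S)\subset\,]-\infty,\beta[$, in which case $\tau<0$ by Prop.\ 3.8 of \cite{AD23}, or $\cal T(S)\subset\,]\beta,0[$, in which case Prop.\ 3.7 of \cite{AD23} yields a weakly positive degree-zero current $\tau'$ with $d_\beta d_\beta^c\tau'\ge 0$, $\neq 0$, whose associated automorphic PSH function would be constant on an infinite chain of rational curves in $\hat S$, forcing $\mu=1$, a contradiction. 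Without this (or a completed substitute) you have not proved the first bullet implies the second.

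The converse and the ``moreover'' clause also have gaps. Your route rests on the unproven assertion that near $\hat D_j=\{z_2=0\}$ one has $\hat u=\nu_j\log|z_2|+h$ with $h$ pluriharmonic, i.e.\ that $dd^c\hat u=\sum_j\nu_j[\hat D_j]$; this is true but requires the support/structure theorem for closed positive $(1,1)$-currents (including at the intersection points of the curves), and it is precisely to avoid/justify such an extension statement that the paper argues quite differently: Brunella's theorem gives that $S$ is a hyperbolic Kato surface, then in the index-one intermediate case the pairing $f=\langle X,\part\hat u\rangle$ with the twisted vector field, Ma.\ Kato's extension theorem and the uniqueness of the foliation \cite{DO} force $\part\hat u$ to extend logarithmically with $\mu=k(S)$, and the index $m\ge 2$ case is reduced to index one via Proposition \ref{RevRam}, Proposition \ref{kOT} and the Greb--Kebekus--Kov\'acs extension theorem across the quotient singularities. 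Relatedly, your justification of the integrality of the torsion is incorrect as stated: Lemma \ref{CSnegatif} gives $\mu=\prod_i\a_i>0$, not $\mu=k(S)$ (that equality is only observed in the explicit example), so the statement ``$\mu$ is an integer for intermediate $S$'' is not derived in your write-up; one needs either the paper's $\o/\t$ argument or the uniqueness of the twisting constant $\l>1$ from \cite[Lemma 5.4]{DO} combined with the existence of a $k(S)$-twisted logarithmic form. Your uniqueness argument for $\hat u$ (real positive residues pin $\theta$ up to a positive scalar, then a real-valued antiholomorphic difference is constant and killed by $\mu\neq 1$) is essentially the paper's and is fine.
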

\begin{proof}  1) If there exists a non trivial logarithmic form $\t\in H^0(S,\O^1(\log D)\ot \cal L_\mu)$, with $\mu>1$, $D$ contains (at least) a cycle of rational curves \cite[lemma 6.10]{AD23}. As $\mu>1$, $\mu=\f(\b)$ with $\b<0$. Besides, with notations of \cite{AD23}, we have the relation
$$CS(\cal F, D_i, D_i\cap D_{i-1})=-\frac{Res_\b(\t)_{\hat D_{i-1}}}{Res_\b(\t)_{\hat D_i}}$$
Since Camacho-Sad index are negative by theorem \ref{periodesmemesigne}, quotients of residues are positive and up to multiplication of $\t$ by a suitable complex number, residues are all positive. By  Proposition \ref{lemma6.18AD23}, there exists $\tau$  such that $T:=d_\b d_\b^c\tau\ge 0$. Recall that for $\hat\b=d\hat f$, 
$$dd^c(e^{-\hat f}\hat\tau)=e^{-\hat f}d_{\hat\b}d_{\hat\b}^c\hat\tau$$
therefore $\hat u=e^{-\hat f}\hat \tau$ is PSH multiplicatively automorphic with positive constant $\mu>1$ and PH outside $\hat D$. The set $\cal T(S)$ of LCS structures is an interval which satisfies $\cal T(S)\subset ]-\infty,0[$ with $\b\not\in\cal T(S)$ therefore $\cal T(S)\subset ]-\infty, b[$ or $\cal T(S)\subset ]b,0[$. In the first case $\tau<0$ by \cite[Prop 3.8]{AD23}. In the second case,  by \cite[Prop 3.7]{AD23} there exists for any $\b\in b$,  a weakly positive current  $\tau'>0$ of degree zero such that $T=d_\b d_\b^c \tau'\ge 0$ and $T\neq 0$. Hence $\hat u=e^{-\hat f}\hat \tau'$ is a non constant PSH multiplicatively automorphic function with positive constant $\mu>1$. As in $\hat S$, $\hat D$ contains an infinite chain of rational curves, $\hat u$ is constant on this chain, and $\mu=1$\ldots contradiction and this case does not exist. \\ 

2) Conversely suppose that $\tau$ exists such that $\hat u=e^{-\hat f}\hat\tau$ is negative, PSH, PH outside $\hat D$, with constant $\mu>1$ of automorphy. By Brunella's theorem  $S$ is a hyperbolic Kato surface. 
By lemma 5.4 in \cite{DO},
$$l:={\rm Card} \{\l\in \bb C^\star\mid h^0(S,\O^1(\log D)\ot\cal L_\l)>0\}\leq 2$$
and equality holds \iff $S$ is a Inoue-Hirzebruch surface.
Therefore in all cases there is a unique real, $\l>1$ for which $h^0(S,\O^1(\log D)\ot\cal L_\l)\neq 0$. Moreover any holomorphic foliation is defined by a twisted logarithmic $1$-form $\t$ \cite[thm 5.5]{DO}, therefore if multiplicative parameter $\l$ is supposed $\l>1$ it is unique.\\
We consider first the case of intermediate Kato surfaces.
\begin{itemize}
\item  Suppose that there exists a global twisted vector field 
$$X\in H^0(S,\Theta(-\log D)\ot \cal L_{\l(S)}).$$
 This situation happens \iff $H^0(S,K_S^{-1}\ot \cal L_{\kappa(S)})\neq 0$, where $\lambda(S)=k(S)\kappa(S)$, i.e. the index of $S$ is $1$ (see  \cite{DOT03}). By unicity of the holomorphic foliation, $X$ is in the kernel of a twisted logarithmic $1$-form 
 $$\t\in H^0(S,\O^1(\log D)\ot \cal L_{k}),$$
(unique up to multiplication by a scalar)  where $k=k(S)$ is uniquely defined by the intersection matrix $M(S)$ of the rational curves. Recall that irreducible curves of $D$ minus the intersection points are leaves of the foliation. \\
The PSH function $\hat u$ defines a holomorphic $(1,0)$-form  $\hat\o=\part \hat u$ outside $\hat D$, hence a twisted holomorphic $(1,0)$-form outside $D$,  $\omega\in H^0(S\setminus D,\O^1\ot \cal L_\mu)$, with the same twisting factor $\mu$. Therefore
$f=\langle X,\o\rangle$ is a holomorphic section of  $\cal L_{\l(S)\mu}$ on $S\setminus D$. As the intersection matrix $M(S)$ is negative definite, $D$ has a strictly pseudoconvex neighbourhood $U$ and $f$ extends to $S$ by Ma.Kato theorem \cite[Prop 1]{K76}.\\
Suppose that $f\neq 0$.  If the divisor $\{f=0\}$ is not empty, it would be topologically trivial, but it is only possible on Enoki surfaces and we have excluded this case. Hence $f$ does not vanish at all.\\
Consider now the leaves of the foliation on $S\setminus D$. By Remmert-Stein theorem the closure $\bar L$ of a leaf $L$ is analytic or contains at least an irreducible component $\delta$ of $D$. If one leaf accumulates, $f$ vanishes at some point of $\delta$ which is impossible. Therefore  leaves extend through $D$ and yield a different foliation, which is impossible \cite[Thm 5.5]{DO}. Therefore $f\equiv 0$, $\o$ and $\t$ define the same foliation. We can consider $\o/\t$ which is a holomorphic section of $\cal L_{\mu /k}$ on $S\setminus D$. It extends to $S$ by the previous Kato theorem and cannot vanish because $S$ is not Enoki once again. Hence  $\cal L_{\mu /k}$ is holomorphically trivial, $\mu=k$, $\o/\t$ is a constant, and $\o\in H^0(S,\O^1(\log D)\ot \cal L_k)$.\\

\item If the index of $S$ is $m\ge 2$, i.e. $m$ is the least integer for which there exists $\kappa\in \bb C^\star$ such that $H^0(S,K_S^{-m}\ot \cal L_\kappa)\neq 0$,  there is a commutative diagram given by proposition \ref{RevRam}. The left-hand side of this diagram can be completed by fiber products

\DIAGV{60}
{}\n{}\n{\hat S'}\n {}\n{\Earv{p'}{90}}\n{}\n{S'}\nn
{}\n{\Near{\hat c}}\n{}\n {}\n{}\n{\Near{c}}\nn
{\hat S\times_S T}\n{}\n{\Earv{p_T}{90}}\n{}\n{T }\nn
{\Sar{\hat\rho}}\n{}\n{}\n{}\n{\Sar{\rho}}\nn
{\hat S\times_S Z}\n{}\n{\Earv{p_Z}{90}}\n{}\n{Z }\nn
{}\n{\Sear{\hat\pi}}\n{}\n{}\n{}\n{\seaR{\pi}}\nn
{}\n{}\n{\hat S}\n{}\n{\Earv{p}{90}}\n{}\n{S}
\diag

All the mappings of this diagram exist by construction of the fiber products, except $\hat c$. The covering $(\hat S\times_S T,p_T,T)$ is cyclic, therefore $\hat c$ contracts the exceptional curves of the first kind.
Let $\hat u$ be a negative PSH function on $\hat S$, PH outside $\hat D$ with polar set $\hat D$ and constant of automorphy $\mu$. The ramified covering $(Z,\pi,S)$ has possible singularities only over singular points of $D$.
The PSH function $\hat u_T:=\hat u\circ\hat\pi \circ \hat\rho$ is PH outside $\hat D_T:=(\hat\pi \circ \hat\rho)^{-1}(\hat D)$ with polar set $\hat D_T$ which is the maximal divisor in $\hat  S\times_S T$. The cyclic covering $(\hat S\times_S T,p_T,T)$ is smooth however perhaps non minimal. An exceptional curve of the first kind belongs to $\hat D_T$ therefore is an irreducible component of the polar set. The PSH function $\hat u':=\hat u_T \circ (\hat c)^{-1}$ on the complement of the points images of these exceptional curves extends continuously to $\hat S'$ as a PSH function.\\
Let $\hat g:\hat S\to \hat S$ the deck automorphism of $\hat S$ such that $S\simeq \hat S/\{\hat g^p\mid p\in\Z\}$ and $\hat g^\star \hat u=\mu \hat u$.\\
We obtain the lifts $\hat g_Z$ and $\hat g_T$ of $\hat g$ applying universal property of the fiber product.\\

\DIAGV{60}
{}\n{}\n{\hat S'}\n{}\n{\Earv{\hat g'}{90}}\n{}\n{\hat S'}\n{}\n{\Earv{p}{90}}\n{}\n{S'}\nn
{}\n{\Near{\hat c}}\n{}\n{}\n{}\n{\Near{\hat c}}\n{}\n{}\n{}\n{\Near{c}}\nn
{\hat S\times_S T}\n{}\n{\Earv{\hat g_T}{90}}\n{}\n{\hat S\times_S T}\n{}\n{\Earv{p_T}{90}}\n{}\n{T }\nn
{\Sar{\hat \rho}}\n{}\n{}\n{}\n{\Sar{\hat\rho}}\n{}\n{}\n{}\n{\Sar{\rho}}\nn
{\hat S\times_S Z}\n{}\n{\Earv{\hat g_Z}{90}}\n{}\n{\hat S\times_S Z}\n{}\n{\Earv{p_Z}{90}}\n{}\n{Z}\nn
{}\n{\Sear{\hat\pi}}\n{}\n{}\n{}\n{\Sear{\hat \pi}}\n{}\n{}\n{}\n{\Sear{\pi}}\nn
{}\n{}\n{\hat S}\n{}\n{\Earv{\hat g}{90}}\n{}\n{\hat S}\n{}\n{\Earv{p}{90}}\n{}\n{S}
\diag

For $\hat u'=\hat u\circ \hat\pi \circ \hat \rho\circ \hat c^{-1}$, we have $\hat u'\circ \hat g'=\mu \hat u'$, hence  $\hat u'$ has the same torsion constant.

By index one case and thm \ref{kOT} of Oeljeklaus-Toma, $\mu=k(S')=k(S)$.

By the case $m=1$, $\hat\t'=\part \hat u'$ induces on $S'$ a twisted logarithmic $1$-form $\t'\in H^0(S',\O^1(\log D')\ot \cal L_{k'})$ invariant by the action of $\bb Z/m\bb Z$, since it is the case for $\hat u_T$ and $\hat u'$, with polar set the maximal divisor $D'$ of $S'$. Besides, by proposition \ref{kOT}, $\mu=k'=k(S')=k(S)$.\\
Let $\g':\hat S'\to \hat S'$ be the generator of $Aut_{S'}(\hat S')\simeq \bb Z$ such that $\g^\star \hat\t'=k(S)\hat\t'$. The quotient $Z'\simeq S'/\bb U_m$ is a normal surface with has at most isolated quotient singularities image by $\pi'$ of intersection points of the rational curves of $S'$, therefore $\hat\t'$ induces on the regular part $Z'_{reg}$ of $Z'$ a twisted logarithmic $1$-form $\t'$.  The singular points of $Z'$  are Jung-Hirzebruch singularities, hence their desingularization is a chain of rational curves (see \cite{bpv}). Let $P$ such a point, $U$ a Stein neighbourhood of $P$ and $\rho':V\to U$ a minimal desingularization, $E=f^{-1}(P)$. On $V$, simply connected, $\cal L_{k(S)}$ is trivial hence ${\rho'}^\star \t'$ extends on $V$ as a logarithmic $1$-form by \cite[Thm 1.1 and Rem. 1.2]{GKK10} (recall that a reflexive coherent sheaf is normal). Finally $\t'$ extends globally on $T'$ and gives a twisted logarithmic $1$-form $\t\in H^0(S,\O^1(\log D)\ot \cal L_{k(S)})$.
\end{itemize}
 For Inoue-Hirzebruch surfaces the Example 4.5. in \cite{AD23} shows that there exist a PSH function $\hat v$ on $\hat S$ with multiplicative automorphic constant $\l>1$. Moreover there are twisted logarithmic 1-form $\t$ and twisted vector field $X$ with same quadratic real number $\l>1$ by \cite[p1526]{DO}.\\
We show as for intermediate case of index $m=1$ that $\part v$ extends to $S$ with same torsion .

By the previous arguments, if $\hat u$ and $\hat v$ are negative PSH function, PH outside the rational curves, $\hat\t=\part \hat u$ and $\hat\o=\part\hat v$ have the same coefficient of torsion and  induce on $S$ two twisted logarithmic $1$-form $\t$ and $\o$ respectively, in the same space $H^0(S,\O^1(\log D)\ot \cal L_{k(S)})$ of dimension one. Therefore there exists a complex number $t$ such that $\t=t\o$. We have then $\part(\hat u-t\hat v)=0$, i.e. $\hat g:=\hat u-t\hat v$ is antiholomorphic on $\hat S$ and gives an antiholomorphic section $g$ of $\cal L_{k(S)}$. The holomorphic section $\bar g$ of $\cal L_{k(S)}$ vanishes identically because $S$ is not a Enoki surface, therefore $\hat u=t\hat v$ and since both $\hat u$ and $\hat v$ are negative $t$ is a positive constant.
\end{proof}

\begin{defn}  Let $S$ in class VII$_0^+$.
A negative locally integrable function $\hat u:\hat S\to \bb R_-$ multiplicatively automorphic  PSH function  with $\hat D$ as polar set, PH outside $\hat D$ and multiplicative constant $\mu>1$ will be called a Green function.
\end{defn}

Here we give a geometric conjectural meaning of the upper bound $b$ of $\cal T(S)$:
\begin{conj} Let $S$ in class VII$_0$ and $\mathcal T(S)=]a,b[$, $-\infty\le a$, $b<0$ be the moduli space of LCS structures taming the complex structure of $S$. Let $\f$ be the decreasing mapping
$$\f:]-\infty, 0]\to \bb [1,+\infty[, \quad a\mapsto C=e^{-c}$$
  If there is a Green function $\hat u$ on $\hat S$ with multiplicity constant $\l>1$ or if there is a twisted logarithmic $1$-form $\t\in H^0(S,\O^1(\log D)\ot\cal L_\l)$ with twisting constant $\l>1$,  then $\l=\f(b)$.
\end{conj}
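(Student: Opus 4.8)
The plan is to reduce the conjecture to a single inequality. By Theorem \ref{logformPSH} the two hypotheses are equivalent, force $S$ to be a hyperbolic Kato surface (the Enoki case is excluded since $b<0$), and single out the twisting constant $\l>1$, which is unique when $S$ is not a Inoue-Hirzebruch surface; write $\l=\f(c_0)$ with $c_0<0$. By the Proposition stated in the Introduction (the improvement of Lemma~6.18 of \cite{AD23}, as used in the proof of Theorem \ref{logformPSH}), the class corresponding to $c_0$ satisfies $c_0\notin\cal T(S)=]a,b[$, and by \cite[section 4.3]{AD23} one has $\f(b)\ge\l$, that is $c_0\ge b$ since $\f$ is decreasing. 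Hence $c_0\in[b,0[$, and the conjecture is equivalent to the reverse inequality $\l\ge\f(b)$, i.e.\ $c_0\le b$, i.e.\ $c_0=b$.

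To obtain $\l\ge\f(b)$ I would approach $b$ from inside $\cal T(S)$: pick classes $[\a_n]\to b^-$ in $\cal T(S)$ with LCS forms $\o_n$, $d_{\a_n}\o_n=0$, $\o_n^{1,1}>0$, and analyse the limit after a suitable normalisation. Because $b$ is a boundary point of $\cal T(S)$, the expectation is that no genuine positive $(1,1)$-form survives away from $D$ in the limit: after passing to a subsequence, the conformally rescaled $(1,1)$-currents converge weakly-$\ast$ to a non-zero $d_b$-closed positive current $T_\infty\ge 0$ carried by $D$, so $T_\infty=2\pi\rho\,T_D$ with $\rho>0$; by the twisted $dd^c$-formalism of \cite[Prop. 3.7 and 3.8]{AD23} it admits a degree-zero potential $\tau_\infty$ with $d_b d_b^c\tau_\infty=T_\infty$, hence $\hat u_\infty=e^{-\hat f}\hat\tau_\infty$ is a Green function on $\hat S$ with automorphy constant $\f(b)$. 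Running the converse implication of Theorem \ref{logformPSH} (extending $\part\hat u_\infty$) then produces a non-trivial element of $H^0(S,\O^1(\log D)\ot\cal L_{\f(b)})$, and uniqueness of the twisting constant (Theorem \ref{logformPSH} and \cite[lemma 5.4]{DO}) forces $\f(b)=\l$. For Inoue-Hirzebruch surfaces the identity $\l=\f(b)$ is instead checked directly from the explicit uniformisation of the Example: only the eigenvalue $\l_1>1$ yields a negative potential $\hat u_1<0$, whereas $\hat u_2$ changes sign, which pins down both $\l$ and $b$.

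An alternative, perhaps more robust route to $c_0\le b$ is to show that every $c$ slightly less than $c_0$ already lies in $\cal T(S)$, which gives $b\ge c_0$. For this one would manufacture an LCS structure with Lee class $c$ out of the Green function $\hat u=e^{-\hat f}\hat\tau$: the potential $\tau$ is $d_\b$-pluriharmonic off $D$ with $d_\b d_\b^c\tau=2\pi Res_\b(\t)T_D$ and \emph{positive} residues by Theorem \ref{periodesmemesigne}, so $\tau$ is strictly subharmonic transversally to $D$; patching $dd^c$ of a small multiple of $e^{-\hat f}\hat\tau$ with a K\"ahler form on a strictly pseudoconvex neighbourhood of $D$ (which exists since the intersection matrix of the rational curves is negative definite) and adjusting the conformal factor $e^{-t\hat f}$ should produce a $d_c$-closed $2$-form with positive $(1,1)$-part for every $c$ in a left neighbourhood of $c_0$. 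This amounts to reversing the degeneration of the \cite{AD23} construction of LCS structures and identifying its endpoint with $c_0$, which is precisely the content of the conjecture.

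The main obstacle is the same in both approaches: controlling the degeneration of the LCS family at the boundary of $\cal T(S)$, namely (i) normalising so that the limit current is non-zero, (ii) obtaining weak-$\ast$ convergence to a $d_b$-closed positive current, and (iii) proving that this limit is carried by $D$ and does not spread over $S\setminus D$. For (iii) one has to use the Kato geometry --- the pseudoconvex end, negativity of the intersection matrix, Lemma \ref{onlysing}, Corollary \ref{racines3} and the leaf structure of $\cal F$ --- together with the fact that a $d_b$-closed positive current on $S\setminus D$ with the correct potential growth would itself define an LCS structure with Lee class $b$, contradicting $b\notin\cal T(S)$ unless it degenerates. In every step the engine is the negativity of the Camacho-Sad indices (equivalently, positivity of the residues) established in Theorem \ref{periodesmemesigne}.
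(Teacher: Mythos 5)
The statement you are analysing is stated in the paper only as a \emph{Conjecture}: the paper offers no proof of it, recording merely the one-sided inequality $\f(b)\ge \l$ for hyperbolic Kato surfaces (quoting \cite[section 4.3]{AD23}) and the fact, obtained here through Proposition \ref{lemma6.18AD23} and the proof of Theorem \ref{logformPSH}, that the class $\b$ with $\f(\b)=\l$ does not lie in $\cal T(S)$. Your reduction is therefore correct but not new: writing $\l=\f(c_0)$, the known results give $c_0\ge b$, and the conjecture is exactly the reverse inequality $c_0\le b$. So everything hinges on whether your two proposed routes actually establish $\l\ge\f(b)$, and they do not; both stop precisely where the paper stops.

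In the first route, the steps you label (i)--(iii) --- normalising the degenerating family $\o_n$ so that the limit is a non-zero positive $d_b$-closed current, and above all proving that this limit is carried by $D$ with a potential that is pluriharmonic off $D$ and negative --- are not arguments but a restatement of the conjecture; nothing in the paper or in the cited parts of \cite{AD23} provides the compactness or the localisation on $D$ of such a boundary limit, and without (iii) you cannot feed the limit into Brunella's theorem or into the converse direction of Theorem \ref{logformPSH}. The second route fails as written: since the Green function is pluriharmonic outside $\hat D$, the current $d_\b d_\b^c\tau=2\pi\,Res_\b(\t)\,T_D$ is supported on the curve $D$ and contributes no strict positivity on any open set, so it cannot be ``patched'' into a taming $2$-form; a K\"ahler form exists only on a strictly pseudoconvex neighbourhood of $D$ (the surface itself is non-K\"ahler), so away from that neighbourhood the glued form has no positive $(1,1)$-part, and the $d_c$-closedness of the glued form for $c<c_0$ is never addressed --- as you yourself concede, this ``amounts to reversing the degeneration'', i.e.\ to the conjecture itself. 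Two further points need care: for Inoue--Hirzebruch surfaces the twisting constant is not unique (two values occur, by lemma 5.4 of \cite{DO} as used in the proof of Theorem \ref{logformPSH}), so the ``uniqueness forces $\f(b)=\l$'' step does not apply there and the identification of $b$ from the explicit example is not carried out; and Theorem \ref{logformPSH} requires $\l>1$ and $D\ne 0$, so the Enoki borderline $\l=1$, $b=0$ mentioned in the paper must be treated separately. In short: the reduction to $c_0\le b$ is sound, but the analytic core --- identifying the endpoint $b$ of $\cal T(S)$ with the class of $\l$ --- remains unproved, which is consistent with the paper leaving the statement as an open conjecture.
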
 

\section{Appendix}
We give here explicit examples for Proposition \ref{kOT}:
\begin{example} 1) For $b_2(S)=3$ there is only one configuration of curves for which the index is different from one, that is $a(S)=(42\  2)=(s_1r_1)=-(D_0^2,D_1^2,D_2^2)$. The opposite intersection matrix is
$$M(S)=\left( \begin{array}{ccc} 2&0&-1\\0&2&-1\\-1&-1&2\end{array}\right).$$
Then $\det M(S)=4$, $k(S)=\sqrt{\det M(S)}+1=3$ and the index is $m(S)=2$ because we have $D_{-K}=\frac{3}{2}D_0+\frac{1}{2}D_1+D_2$. The associated contracting germ is of simple type because there is only $t=1$ tree. By  the procedure which allows to determine the sequence of blow-ups and gluing in the construction of the Kato surface S, given in \cite[p337]{OT08} we have with the notations there, $q=0$, 
$$[\a_1 +1,\underbrace{2,\ldots,2}_{\a_1-2},\underbrace{2,\ldots,2}_{s-j+1}]=[42\ 2]$$
hence $\a_1=3$ and $s=j$.\\
Besides, the index is given by the formula \cite[p331]{OT08}
$$m(S)=\frac{k-1}{gcd\{k-1,s\}}$$
therefore $s$ is odd. Since $c_j=1$ and $j<k=3$ for all normal forms, $j=1$ and the germ associated to $S$ is of the form 
$$F(z,\zeta)=(\lambda z\zeta + \zeta,\zeta^3), \quad \lambda\in\bb C^\star.$$
We determine now the germ associated to $S'$. We follow the computations of \cite[p330]{OT08}. Set
$$r':= \lfloor \frac{mj}{k}\rfloor=0,\quad s':=2s=2,\quad j'=2j=2, \quad P'(\zeta):= c_1 \zeta^{2}=\zeta^2$$
hence
$$F'(z,\zeta)=(\lambda z\zeta^{s'}+P'(\zeta),\zeta^k)=(\lambda z \zeta^2+\zeta^2,\zeta^3)=\bigl(\zeta^2(\lambda z+1),\zeta^3)\bigr)$$
Following procedure given in \cite[337]{OT08} we obtain $F'=\Pi_0\circ \Pi_1\circ \Pi_2\circ\sigma$
with 
$$\s(z,\zeta)=\left((\l z+1)^3-1,\frac{\zeta}{\l z+1}\right),$$
$$ \Pi_2(u,v)=\bigl((u+1)v,v\bigr),\quad \Pi_1(u',v')=(v',u'v'),\quad \Pi_0(v',u')=v',u'v').$$
We have $a(S')=[332]$. We check that $det M(S')=4$ and $D_{-K}=2D_0+D_1+2D_2$ in particular $m(S')=1$.\\

2) For $b_2(S)=4$, there are several configurations of index $m\ge 2$. For instance, suppose that $a(S)=(3\ 3\ 22)=(s_1s_1r_2)=-(D_0^2,D_1^2,D_2^2,D_3^2)$. We have
$$M(S)=\left(\begin{array}{cccc}3&0&-1&-1\\0&3&0&-1\\-1&0&2&-1\\-1&-1&-1&2\end{array}\right)$$
$$\det M(S)=4,\quad k(S)=3, \quad D_{-K}=\frac{5}{2}D_0+\frac{3}{2}D_1+3D_2+\frac{7}{2}D_3, \quad m(S)=2.$$
$$[\a_1+2,\underbrace{2,\ldots,2}_{\a_1-1},\a_2+1,\underbrace{2,\ldots,2}_{\a_2-2},\underbrace{2,\ldots,2}_{s-j+1}]=[33 22]$$
therefore $\a_1=1$, $\a_2=2$, $s-j=1$, and we recover that $b_2(S)=\a_1+\a_2+(s-j)=4$.
Besides $t=1$ therefore the germ associated to $S$ is simple,
$$m(S)=2=\frac{k-1}{gcd\{k-1,s\}}$$
hence $s$ is odd and $j$ is even. We have $j<k=3$ hence $j=2$ and $s=3$.
$$F(z,\zeta)=(\l z\zeta^3 + \zeta^2+c_3\zeta^3,\zeta^3)$$

$$r=\lfloor \frac{2j}{k}\rfloor=1,\quad s'=ms - r(k-1)=4,\quad j'=mj-rk=1,$$
$$ P'(\zeta)=\sum_{p=j}^s c_p\zeta^{mp-rk}=\zeta+c_3\zeta^3$$

$$F'(z,\zeta)=(\l z\zeta^4+\zeta+c_3\zeta^3,\zeta^3)=(\zeta A,\zeta^3),\quad {\rm with}\ A=A(z,\zeta)=\l z\zeta^3+1+c_3\zeta^2$$
Set
$$A^{-3}-1=\zeta^2 B,\quad B=-3c_3+\cdots, \quad {\rm and}\quad BA^{-2}=-3c_3+\zeta C,$$
then a sequance of six blow-ups of the unit ball $B^2\subset \bb C^2$
$$\Pi_0(u',v')=(v',u'v'),\quad \Pi_1(u,v)=(uv,v),\quad \Pi_2(u,v)=(uv,v),\quad \Pi_3(u,v)=(uv+1,v)$$
$$\Pi_4(u,v)=(uv,v),\quad \Pi_5(u,v)=(uv-3c_3,v),\quad \s(z,\zeta)=(CA^{-1},\zeta A)$$
$\s$ is a germ of biholomorphism and
$$F'(z,\zeta)=\Pi_0\circ\Pi_1\circ\Pi_2\circ\Pi_3\circ\Pi_4\circ\Pi_5\circ\s(z,\zeta).$$
Setting $C_i=\Pi^{-1}(p_{i-1})$ with $p_{i-1}\in C_{i-1}$, $i\ge 0$, $p_{-1}$ the origin of the ball and $C_{-1}=\s^{-1}(C_5)$, we obtain the sequence of self intersection (in $S$), $(C_0^2,C_1^2,C_2^2,C_3^2,C_4^2,C_5^2)=(-2,-2,-2,-2,-2,-4)$, i.e. setting by a circular permutation $C_5$ at the first place and denoting $D_0=C_5$, a.s.o. we obtain $a(S)=(42\  2222)=(s_2,r_4)$. We check that $D_{-K}=3D_0+2D_1+4D_2+6D_3+5D_4+4D_5$, therefore the index is $m(S')=1$ as wanted.
\end{example}

\section{Declarations}
\noindent My manuscript has no associated data.\\
{\bf Conflicts of interest:} The author states that there is no conflicts of interests.

\end{document}